\documentclass[11pt]{extarticle}
\usepackage[a4paper,bindingoffset=0.in,left=2.5cm,right=2.5cm,top=2cm,bottom=3cm]{geometry}

\usepackage{graphicx}
\usepackage{mathtools,amsmath,amssymb,amsthm}
\usepackage{mathrsfs}
\usepackage{comment,xcolor}

\usepackage{multirow}
\usepackage{makecell}
\usepackage{array,booktabs}
\usepackage{color, colortbl}

\usepackage{tabstackengine}[2016--11--30]
\usepackage{xcolor}
\usepackage[labelfont=bf]{caption}

\usepackage{algpseudocode}
\usepackage{siunitx}
\usepackage[affil-it]{authblk}
\usepackage[ruled,vlined,linesnumbered]{algorithm2e}
\usepackage{tikz}
\usetikzlibrary{decorations.pathreplacing,decorations.markings,arrows,arrows.meta,fit}
\tikzset{
  midarrow/.style={
    postaction={
      decorate,
      decoration={
        markings,
        mark=at position 0.45 with {\arrow{>}}
      }
    }
  }
}
\usepackage{float}
\usepackage{enumitem}

\usepackage[
    backend=biber,
    style=numeric,
    maxbibnames=99,
            doi=false,
    url=false,
    isbn=false,
    eprint=false]{biblatex}

\usepackage{aliascnt}

\usepackage{aliascnt}

\newtheorem{theorem}{Theorem}[section]

\newaliascnt{proposition}{theorem}
\newtheorem{proposition}[proposition]{Proposition}
\aliascntresetthe{proposition}

\newaliascnt{property}{theorem}

\aliascntresetthe{property}

\newaliascnt{lemma}{theorem}
\newtheorem{lemma}[lemma]{Lemma}
\aliascntresetthe{lemma}

\newaliascnt{claim}{theorem}

\aliascntresetthe{claim}

\newaliascnt{assumptions}{theorem}

\aliascntresetthe{assumptions}

\newaliascnt{corollary}{theorem}
\newtheorem{corollary}[corollary]{Corollary}
\aliascntresetthe{corollary}

\newaliascnt{conjecture}{theorem}

\aliascntresetthe{conjecture}

\newaliascnt{problem}{theorem}
\newtheorem{problem}[problem]{Problem}
\aliascntresetthe{problem}

\theoremstyle{definition}

\newaliascnt{definition}{theorem}
\newtheorem{definition}[definition]{Definition}
\aliascntresetthe{definition}

\newaliascnt{remark}{theorem}
\newtheorem{remark}[remark]{Remark}
\aliascntresetthe{remark}

\newaliascnt{example}{theorem}
\newtheorem{example}[example]{Example}
\aliascntresetthe{example}

\renewenvironment{proof}{{\noindent\bfseries Proof.}}{\qed}

\usepackage[colorlinks=true,allcolors=blue!75!black]{hyperref}
\usepackage[capitalise,noabbrev]{cleveref}

\crefname{theorem}{Theorem}{Theorems}
\crefname{proposition}{Proposition}{Propositions}
\crefname{property}{Property}{Properties}
\crefname{lemma}{Lemma}{Lemmas}
\crefname{claim}{Claim}{Claims}
\crefname{assumptions}{Assumption}{Assumptions}
\crefname{corollary}{Corollary}{Corollaries}
\crefname{conjecture}{Conjecture}{Conjectures}
\crefname{problem}{Problem}{Problems}
\crefname{definition}{Definition}{Definitions}
\crefname{remark}{Remark}{Remarks}
\crefname{example}{Example}{Examples}

\definecolor{ThistleBlue}{rgb}{102, 153, 255}

\newcommand{\diag}{\operatorname{diag}}

\newcommand{\sign}{\operatorname{sign}}
\newcommand{\Gr}{\operatorname{Gr}}

\newcommand{\R}{\mathbb{R}}
\renewcommand{\P}{\mathbb{P}}

\newcommand{\ba}{\mathbf{a}}
\newcommand{\bx}{\mathbf{x}}
\newcommand{\bB}{\mathbf{B}}

\newcommand{\bI}{\mathbf{I}}
\newcommand{\bS}{\mathbf{S}}
\newcommand{\bP}{\mathbf{P}}
\newcommand{\bC}{\mathbf{C}}
\newcommand{\bA}{\mathbf{A}}
\newcommand{\bM}{\mathbf{M}}

\newcommand{\xstar}{\mathbf{x}_{\star}}

\newcommand{\JacAB}{J_{\ba,\bB}}   
\newcommand{\Jx}{J_{\ba,\bB}({\xstar})}   

\DeclareMathAlphabet{\mathcal}{OMS}{cmsy}{m}{n}
\DeclareMathAlphabet{\mathcalbf}{OMS}{cmsy}{b}{n}
\DeclareMathAlphabet{\mathbfsfit}{\encodingdefault}{\sfdefault}{b}{it}
\DeclareMathAlphabet{\mathbfsf}{\encodingdefault}{\sfdefault}{b}{n}

\renewcommand{\leq}{\leqslant}
\renewcommand{\geq}{\geqslant}

\title{{\bf Stable Coexistence in Ecologies and Games}}
\author{{T\"urk\"u \"Ozl\"um Çelik\thanks{Max Planck Institute of Molecular Cell Biology and Genetics (MPI-CBG)
  and Center for Systems Biology Dresden (CSBD). celik@mpi-cbg.de \& zucal@mpi-cbg.de} , Vincenzo Antonio Isoldi\thanks{Max Planck Institute for Mathematics in the Sciences (MPI MiS). isoldi@mis.mpg.de \& mail@irem-portakal.de} , Irem Portakal\textsuperscript{$\dagger$}, Giulio Zucal\textsuperscript{$\ast$}}}
\date{}
\begin{document}

\maketitle

\begin{abstract}
We study feasible stable equilibria of Lotka--Volterra systems and their higher-order extensions. We complete the classification of impossible ecological interaction networks with at most four species and extend several of these impossibility results to families with arbitrarily many species. We then show that these sign-pattern obstructions are specific to the pairwise Lotka--Volterra model: arbitrary prescribed growth rates and pairwise coefficients can be supplemented by higher-order interactions so as to admit a feasible asymptotically stable equilibrium. Through the correspondence with replicator dynamics, we interpret feasible equilibria of higher-order Lotka--Volterra systems as totally mixed symmetric Nash equilibria of symmetric multiplayer games, derive bounds on their number, and study their robustness under perturbations of the payoff tensors. We conclude by showing that every impossible ecology determines a nonempty open class of symmetric two-player games with no totally mixed evolutionarily stable strategy.
\end{abstract}

\section{Introduction}

Stable coexistence is a central phenomenon in both ecological dynamics and evolutionary game theory \cite{Hofbauer1998}. In ecology, coexistence is modeled by an equilibrium at which all species have positive abundance, while stability asks whether such a state persists under small perturbations of the population densities. In evolutionary game theory, closely related objects are internal equilibria of replicator dynamics and totally mixed Nash equilibria, where every strategy is used with positive probability. This paper studies stable coexistence across these two settings, starting from ecological interaction networks and leading to higher-order games.

We begin with the generalized Lotka--Volterra system $\dot{\bx}=\diag(\bx)(\ba-\bB\bx),$ an ordinary differential equation that models the dynamics of interacting populations. Here $\bx(t)\in\mathbb R^n$ records species abundances, $\ba\in\mathbb R^n$ their intrinsic growth rates, and $\bB\in\mathbb R^{n\times n}$ their pairwise interactions. The signs of the entries of $(\ba,\bB)$ determine an \emph{ecological interaction network}. Once such a sign pattern is fixed, one may ask whether some choice of parameter magnitudes produces a stable equilibrium at which all species coexist with positive abundance. Following \cite{Haas2025}, a sign pattern for which this is impossible is called an \emph{impossible ecology}. 

The systematic study of impossible ecologies was initiated in \cite{Haas2025}, where the problem was classified for small systems and a list of candidate four-species obstructions was identified through numerical exploration. In \cite{Celik2025}, some candidates were proved impossible using combinatorial and computational tools through the lenses of nonlinear algebra. More broadly, classical sufficient conditions for the global stability of equilibria in generalized Lotka--Volterra systems were developed in \cite{goh1977global,takeuchi1978global}. In a complementary random-matrix approach, May established a complexity--stability threshold for randomly assembled community matrices \cite{may1972will}. Subsequent work refined this framework through interaction-type-dependent stability criteria and geometric analyses of feasibility \cite{allesina2012stability,grilli2017feasibility}. More recently, \cite{aspenberg2026stableequilibrialotkavolterraequations} obtained necessary conditions for the stability of Lotka--Volterra equilibria, complementing the classical sufficient conditions above. These studies primarily concern specified interaction matrices or statistical ensembles. By contrast, the impossible-ecology problem fixes only the signs of the growth rates and interaction coefficients and asks whether any choice of their magnitudes can produce a feasible, locally asymptotically stable equilibrium.

Our first contribution is to complete the four-species classification started in \cite{Haas2025, Celik2025}. Using tools from matrix analysis and spectral theory, together with the equilibrium equations and the sign constraints imposed by the ecological networks, in Section~\ref{sec: 3.3}, we prove that all eleven four-species candidates identified in \cite{Haas2025} are impossible. Several of the arguments extend beyond four species and yield infinite families of impossible ecologies as shown in Section~\ref{sec:impossible-ecologies}, including families built from obligate prey-predator cycles and competing mutualistic groups. Thus, the conjectural four-species classification is fully resolved, while some of its obstructions are shown to persist for arbitrarily many species.

Our second contribution is to show that these sign-pattern obstructions are specific to the pairwise Lotka--Volterra model. Once higher-order interactions are allowed, the same pairwise sign information no longer prevents stable coexistence (Theorem~\ref{thm: higher stabilization}). More precisely, for arbitrary prescribed growth rates and pairwise coefficients, we construct higher-order interaction terms so that the resulting system has a feasible asymptotically stable equilibrium. Thus, pairwise impossibility need not persist once genuinely higher-order interactions are introduced.

The final part of the paper develops the connection with evolutionary game theory. Building on the higher-order Lotka--Volterra--replicator correspondence outlined in \cite{Gokhalev2}, we formulate it for arbitrary interaction order using tensors symmetric in their partner indices and study the associated symmetric multiplayer games. Extensions of pairwise-interactions dynamical systems to higher-order interactions \cite{abiad2026hypergraphs,battiston2020networks,bick2023higher} have attracted considerable interest recently with generalized Lotka--Volterra systems as a prominent example \cite{JardonLotkaTensors, gibbs2024can,letten2019mechanistic,levine2017beyond}. In the context of the Lotka--Volterra–replicator correspondence, feasible equilibria correspond to totally mixed symmetric Nash equilibria, and we study their robustness under small perturbations of the game. In the pairwise setting, each impossible ecological sign pattern determines a nonempty open class of symmetric two-player games with no totally mixed evolutionarily stable strategy (Corollary~\ref{cor: formats without ess}). Thus, qualitative sign information on ecological interactions yields robust obstructions in payoff space, while the higher-order construction shows that such pairwise obstructions can disappear once multiplayer interactions are introduced.

The paper is organized as follows. In \Cref{sec:FSstratum}, we recall feasible equilibria and reduce the impossibility problem to the exclusion of Hurwitz-stable feasible equilibria. In \cref{sec:impossible-ecologies}, we prove the impossibility results for the eleven four-species ecologies and for the infinite families described above. In \cref{sec:higher}, we show how higher-order interactions can restore stable coexistence and develop the correspondence with higher-order replicator dynamics and symmetric games, including the consequences for payoff robustness and evolutionarily stable strategies.

\section{Stable Coexistence in the Generalized Lotka--Volterra}\label{sec:FSstratum}

Stable coexistence in population dynamics is modeled by an equilibrium at which all species are present and which is stable under small perturbations. In this section, we review the framework introduced in \cite{Haas2025} for studying the existence of such equilibria from qualitative information on the growth rates and interaction coefficients, namely their signs. This leads to a feasibility-and-stability problem for Lotka--Volterra systems.\\

\noindent The generalized Lotka--Volterra system (GLV) is the dynamical system
\begin{equation}\label{eq: lv}
    \dot{\bx} = \diag(\bx)(\ba - \bB\bx),
\end{equation}
where $\bx = (x_i) \in \R^n$ is a vector-valued function of time $t$,  $ \dot{\bx}=(\dot{x}_i)$ represents the vector of time derivatives $\dot{x}_i$ of $x_i$, $\ba = (a_i) \in \R^n$, and $\bB = (b_{ij}) \in \R^{n\times n}$. In components, this is the system of ordinary differential equations (ODEs):
\[
    \dot{x}_i = x_i \left( a_i - \sum_{j=1}^{n} b_{ij} x_j \right), \quad \text{for } i \in [n].
\]
This system plays an important role in theoretical ecology as it models pairwise interactions of $n$ species in the same ecosystem. The parameter $a_i$ represents the intrinsic growth rate of species $i$ in the absence of other interactions. Thus, $a_i > 0$ for a species that grows on its own, and $a_i < 0$ for a species that dies out in the absence of others. We also assume $b_{ii} > 0$, meaning each species has a finite carrying capacity. The sign of $b_{ij}$ (for $i \neq j$) models the specific type of interaction between species $i$ and $j$: \emph{competitive} if $b_{ij},b_{ji}>0$, \emph{mutualistic} if $b_{ij},b_{ji}<0$ or \emph{prey-predator interaction}, with species $i$ predating species $j$, if $b_{ij}<0$, $b_{ji}>0$. In short, one can naturally associate to the parameters $\ba = (a_i) \in \R^n$, and $\bB = (b_{ij}) \in \R^{n\times n}$ the \emph{ordered sign pattern}:  $$\sign(\ba,\bB)=(\sign(a_1),\ldots, \sign(a_n),\sign(b_{11}),\sign(b_{12}),\ldots,\sign(b_{n-1,n}),\sign(b_{n,n}))\in \{\pm\}^{n+n^2}.$$ Thus, the signs of the growth rates and interaction coefficients determine the associated \emph{ecological network} that is illustrated in \cref{fig: Pierregraph}.

\begin{figure}[h!]
\centering
\tikzset{
  fillednode/.style={circle, draw=black, fill=black, inner sep=2pt},
  node/.style={circle, draw=black, fill=white, inner sep=2pt},
  edge/.style={line width=1pt},
  redge/.style={edge, red},
  bedge/.style={edge, blue}
}
\noindent\begin{tikzpicture}[>=stealth,scale=1.3]
\draw[postaction={decorate},decoration={markings,mark=at position 0.50 with {\arrow{<}}},thick] (0.3090,0.9511) -- (-0.8090,0.5878);
\draw[postaction={decorate},decoration={markings,mark=at position 0.80 with {\arrow{<}}},thick] (0.3090,0.9511) -- (-0.8090,-0.5878);
\draw[red,thick] (0.3090,0.9511) -- (0.3090,-0.9511);
\draw[red,thick] (0.3090,0.9511) -- (1.0000,-0.0000);
\draw[blue,thick] (-0.8090,0.5878) -- (-0.8090,-0.5878);
\draw[postaction={decorate},decoration={markings,mark=at position 0.20 with {\arrow{>}}},thick] (-0.8090,0.5878) -- (0.3090,-0.9511);
\draw[postaction={decorate},decoration={markings,mark=at position 0.20 with {\arrow{>}}},thick] (-0.8090,0.5878) -- (1.0000,-0.0000);
\draw[postaction={decorate},decoration={markings,mark=at position 0.50 with {\arrow{>}}},thick] (-0.8090,-0.5878) -- (0.3090,-0.9511);
\draw[postaction={decorate},decoration={markings,mark=at position 0.20 with {\arrow{>}}},thick] (-0.8090,-0.5878) -- (1.0000,-0.0000);
\draw[blue,thick] (0.3090,-0.9511) -- (1.0000,-0.0000);
\fill (0.3090,0.9511) circle (0.10);
\fill (-0.8090,0.5878) circle (0.10);
\fill (-0.8090,-0.5878) circle (0.10);
\filldraw[fill=white] (0.3090,-0.9511) circle (0.10);
\filldraw[fill=white] (1.0000,-0.0000) circle (0.10);
\end{tikzpicture}
\vspace{.5cm}
\hspace{1cm}
\noindent\begin{tikzpicture}[every text node part/.style={align=center},line cap=round,>=stealth,font=\small,scale=1.3]
\node at (-0.08,-1.5) {${\dot{\bx}}=\diag(\bx)\left(\ba-\bB\cdot\bx\right)$};
\draw[<-] (-1.05,-1.65) -- (-1.05,-2.2) -- (-0.5,-2.2) node[anchor=west,inner sep=1.5pt] {$\displaystyle\dot{x}_i=x_i\left(a_i-\sum_{j=1}^n{b_{ij}x_j}\right),\;i \in [n]$};
\draw[<-] (0.9,-2.4) -- (0.9,-2.9) -- (-0.2,-2.9) -- (-0.2,-3.2) node[anchor=north] {growth rates\\$a_i\gtrless 0$};
\draw[<-] (2,-2.4) -- (2,-3.2) node[anchor=north] {interactions\\$b_{ij}\gtrless 0,b_{ii}>0$};
\end{tikzpicture}

\noindent\begin{tikzpicture}[every text node part/.style={align=center},line cap=round,>=stealth,font=\small,scale=1.3]
\begin{scope}[shift={(0,0.2)}]
\filldraw (0,-2) circle(2.5pt);
\node at (-0.25,-2) {$i\vphantom{j}$};
\node[anchor=west] at (0.4,-2) {growing species:\vphantom{d}};
\node at (1.7,-2.4) {$a_i > 0 $};
\node at (-0.25,-2.9) {$i\vphantom{j}$};
\draw (0,-2.9) circle (2.5pt);
\node[anchor=west] at (0.4,-2.9) {dying species:};
\node at(1.7,-3.3) {$a_i < 0 $};
\end{scope}
\begin{scope}[shift={(3.3,2)}]
\draw[red,thick] (0.1,-3.8) node[anchor=east,inner sep=1.5pt,black] {$i\vphantom{j}$} -- (0.6,-3.8) node[anchor=west,inner sep=1.5pt,black] {$j$};
\draw[blue,thick] (0.1,-4.5) node[anchor=east,inner sep=1.5pt,black] {$i\vphantom{j}$} -- (0.6,-4.5) node[anchor=west,inner sep=1.5pt,black] {$j$};
\draw[postaction={decorate},decoration={markings,mark=at position 0.6 with {\arrow{>}}},thick] (0.1,-5.2) node[anchor=east,inner sep=1.5pt,black] {$i\vphantom{j}$} -- (0.6,-5.2) node[anchor=west,inner sep=1.5pt,black] {$j$};
\node[anchor=west] at (1,-3.8){competition:};
\node at (3.6,-3.8) {$b_{ij},b_{ji}>0$};
\node[anchor=west] at (1,-4.5) {mutualism:\vphantom{p}};
\node at (3.6,-4.5){$b_{ij},b_{ji} < 0$};
\node[anchor=west] at (1,-5.2) {predation:};
\node at (3.6,-5.2){$b_{ij} < 0,b_{ji}>0$};
\end{scope}
\end{tikzpicture}

\caption{Lotka--Volterra ecological dynamics on a network of $n=5$ species: definition of the different types of ecological interactions. Figure redrawn from \cite{Haas2025}.
\label{fig: Pierregraph}}
\end{figure}

At a steady state of the system, $\dot{\bx}=0$. If $\bB$ is
invertible, the unique equilibrium with all coordinates nonzero is ${\xstar}=\bB^{-1}\ba$. We call this equilibrium \emph{feasible} if ${\xstar}>0$, corresponding
to the coexistence of all species.

Following \cite{Haas2025}, we are interested in feasible equilibria
that are \emph{locally asymptotically stable}, meaning that solutions
starting sufficiently close to the equilibrium remain close and
converge to it. We are interested not in a single
choice of parameters, but in what can be inferred from their signs
alone. Thus, for a sign pattern
$\sigma\in\{\pm\}^{n+n^2}$, or equivalently an ecological network as
in \cref{fig: Pierregraph}, we say that the associated ecological
dynamics is \emph{possible} if there exist parameters
$\ba\in\R^n$ and $\bB\in\R^{n\times n}$ with
$\sign(\ba,\bB)=\sigma$ such that the Lotka--Volterra system
\labelcref{eq: lv} admits a feasible locally asymptotically stable
equilibrium. Otherwise, we call it \emph{impossible}.

For the Lotka--Volterra system, the Jacobian at a feasible equilibrium
is $J_{\ba,\bB}({\xstar})=-\diag({\xstar})\bB$.
If this matrix is Hurwitz, i.e., all its eigenvalues have strictly negative real part, then ${\xstar}$ is locally asymptotically
stable. The converse need not hold, since a locally asymptotically
stable equilibrium may have Jacobian eigenvalues with zero real part.
The following observation shows that such marginal cases do not
affect the classification by sign patterns: whenever a feasible
equilibrium has no Jacobian eigenvalue with positive real part, the
parameters can be perturbed within the same sign pattern so that the
Jacobian becomes Hurwitz.

\begin{proposition}\label{prop: stability under perturbation}
Let ${\xstar}=\bB^{-1}\ba>0$ be a feasible equilibrium of the GLV system, and suppose that all eigenvalues of $\JacAB({\xstar})=-\diag({\xstar})\bB$ have non-positive real part. Then, for every sufficiently small $\varepsilon>0$, there are perturbed parameters $\widetilde{\bB}=\bB+\varepsilon\diag({\xstar})^{-1}$ and  $\widetilde{\ba}=\ba+\varepsilon\mathbf 1$
with the same sign pattern as $(\ba,\bB)$, such that ${\xstar}$ is still a feasible equilibrium of the perturbed system and $J_{\widetilde{\ba},\widetilde{\bB}}({\xstar})=\JacAB({\xstar})-\varepsilon\bI$.
In particular, $J_{\widetilde{\ba},\widetilde{\bB}}({\xstar})$ is Hurwitz, and hence ${\xstar}$ is locally asymptotically stable for the perturbed system.
\end{proposition}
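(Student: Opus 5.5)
The plan is to verify the three claims in turn: equilibrium preservation, sign preservation, and the Jacobian identity. Hurwitz stability then follows by a spectral shift, and local asymptotic stability by the standard linearization theorem.

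\emph{Equilibrium.} With $\widetilde{\bB}=\bB+\varepsilon\diag({\xstar})^{-1}$ we compute
\[
\widetilde{\bB}{\xstar}=\bB{\xstar}+\varepsilon\diag({\xstar})^{-1}{\xstar}=\ba+\varepsilon\mathbf 1=\widetilde{\ba}.
\]
So ${\xstar}$ solves $\widetilde{\ba}-\widetilde{\bB}\bx=0$ and remains positive, hence feasible. If one insists on ${\xstar}=\widetilde{\bB}^{-1}\widetilde{\ba}$, we also need $\widetilde{\bB}$ invertible. Since $\bB$ is invertible, this holds for all sufficiently small $\varepsilon$ by continuity of the determinant.

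\emph{Sign pattern.} The off-diagonal entries of $\widetilde{\bB}$ coincide with those of $\bB$, because $\diag({\xstar})^{-1}$ is diagonal. The diagonal entries become $b_{ii}+\varepsilon/x_{\star,i}$, which stay positive because $b_{ii}>0$ and $x_{\star,i}>0$. For $\ba$, the sign pattern lives in $\{\pm\}^n$, so every $a_i\neq 0$. Choosing $\varepsilon<\min_{i:\,a_i<0}|a_i|$ keeps each sign of $a_i+\varepsilon$; entries with $a_i>0$ only grow. This is the only place where smallness of $\varepsilon$ is genuinely needed, and it is the only mild obstacle. One must simply record that the pattern has no zero entries, so that a strict threshold exists.

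\emph{Jacobian and stability.} We compute
\[
J_{\widetilde{\ba},\widetilde{\bB}}({\xstar})=-\diag({\xstar})\widetilde{\bB}=-\diag({\xstar})\bB-\varepsilon\,\diag({\xstar})\diag({\xstar})^{-1}=\JacAB({\xstar})-\varepsilon\bI.
\]
The eigenvalues of this matrix are $\lambda-\varepsilon$, where $\lambda$ runs over the eigenvalues of $\JacAB({\xstar})$. By hypothesis each $\lambda$ satisfies $\operatorname{Re}\lambda\le 0$, so each shifted eigenvalue has real part at most $-\varepsilon<0$, and the matrix is Hurwitz. The linearization principle (Lyapunov's indirect method) then gives local asymptotic stability of ${\xstar}$ for the perturbed system.
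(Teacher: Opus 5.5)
Your proposal is correct and follows the paper's proof essentially step for step: you verify $\widetilde{\bB}{\xstar}=\widetilde{\ba}$, note that the off-diagonal entries are unchanged and the diagonal stays positive, use smallness of $\varepsilon$ only to keep the signs of the $a_i$, and conclude by the spectral shift $J_{\widetilde{\ba},\widetilde{\bB}}({\xstar})=\JacAB({\xstar})-\varepsilon\bI$. Your extra remark on the invertibility of $\widetilde{\bB}$ is harmless. It in fact holds for every $\varepsilon>0$ without a continuity argument, because the Hurwitz matrix $-\diag({\xstar})\widetilde{\bB}$ is nonsingular.
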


\begin{proof}
Since ${\xstar}>0$, the diagonal matrix $\diag({\xstar})^{-1}$ is well-defined. Define $\widetilde{\bB}$ and $\widetilde{\ba}$ as above. Then $
\widetilde{\bB}{\xstar}
=
\bB{\xstar}+\varepsilon\diag({\xstar})^{-1}{\xstar}
=
\ba+\varepsilon\mathbf 1
=
\widetilde{\ba}$.
Thus ${\xstar}$ is still a feasible equilibrium of the perturbed system. The Jacobian of the perturbed system at ${\xstar}$ is
\[
J_{\widetilde{\ba},\widetilde{\bB}}({\xstar})
=
-\diag({\xstar})\widetilde{\bB}
=
-\diag({\xstar})\bB-\varepsilon\diag({\xstar})\diag({\xstar})^{-1}
=
\Jx-\varepsilon\bI.
\]
Therefore, if $\lambda_1,\dots,\lambda_n$ are the eigenvalues of $\Jx$, then the eigenvalues of $J_{\widetilde{\ba},\widetilde{\bB}}({\xstar})$ are $\lambda_1-\varepsilon,\dots,\lambda_n-\varepsilon$. Since $\operatorname{Re}(\lambda_i)\leq 0$ for every $i$, we have $\operatorname{Re}(\lambda_i-\varepsilon)<0$ for every $i$.

It remains to check that the sign pattern is preserved. The off-diagonal entries of $\bB$ are unchanged. The diagonal entries change by adding the positive quantity $\varepsilon/x_i^\star$; since $b_{ii}>0$, their signs remain positive. Finally, $\widetilde a_i=a_i+\varepsilon$. Since the sign pattern records only non-zero signs, and hence $a_i\neq 0$ for every $i$, choosing $\varepsilon>0$ sufficiently small preserves the sign of every $a_i$. Therefore $(\widetilde{\ba},\widetilde{\bB})$ has the same sign pattern as $(\ba,\bB)$.
\end{proof} \vspace{0.1cm}

Consequently, a sign pattern is impossible if and only if no choice of parameter magnitudes with that sign pattern yields a feasible equilibrium whose Jacobian is Hurwitz. This leads to the following problem.

\begin{problem}\label{ProbImpossibleEco}
Given the ordered sign pattern $\sigma = (\sigma_1,\dots,\sigma_n,\sigma_{11},\dots,\sigma_{nn}) \in \{\pm\}^{n+n^2}$, namely an ecological network in \cref{fig: Pierregraph}, decide whether there is no $({\bf a},{\bf B})\in \mathbb{R}^n \times \mathbb{R}^{n^2} $ with $\sign(\ba,\bB) = \sigma$ for which ${\xstar}=\bB^{-1}\ba>0$ and the matrix $\Jx=-\diag({\xstar})\bB$ is Hurwitz.
\end{problem}

\begin{example}[Two-species obligate mutualism]
\label{ex:two-species-obligate-mutualism}
Consider two species in obligate mutualism, so that $a_1,a_2<0$, $b_{11},b_{22}>0$, and $b_{12},b_{21}<0$.
Suppose that there exists a feasible equilibrium
${\xstar}=\bB^{-1}\ba>0$ whose Jacobian $\Jx=-\diag({\xstar})\bB$ is Hurwitz. Then $0<\det(\Jx)=x_1^\star x_2^\star\det(\bB)$, and feasibility implies $\det(\bB)>0$. On the other hand, Cramer's
rule gives $x_1^\star = (a_1b_{22}-b_{12}a_2)/{\det(\bB)}<0$. This contradicts
feasibility. 
\end{example}

This example makes visible that the impossible-ecology problem becomes an algebraic compatibility problem between sign constraints, feasibility inequalities, and spectral conditions. The problem is completely understood for two and three species. In the two-species case, among the six non-trivial interaction patterns, only obligate mutualism is impossible. For three species, the four non-trivially impossible ecologies are competition with two obligate mutualists, facultative predation on two obligate mutualists, obligate cyclic predation, and obligate mutualism of three species; see \cite{Haas2025}. For larger systems, \cite{Haas2025} combined a classification of interaction topologies with numerical sampling of parameter magnitudes. This led to eleven candidate impossible ecologies for four species, displayed in \cref{fig:symmetricimpossible,fig: asymmetricimpossible}, and to further candidates for five species. These larger cases were earlier left conjectural.

\begin{figure}[h]
\centering

\tikzset{
  fillednode/.style={circle, draw=black, fill=black, inner sep=2pt},
  node/.style={circle, draw=black, fill=white, inner sep=2pt},
  edge/.style={line width=1pt},
  redge/.style={edge, red},
  bedge/.style={edge, blue},
  >=stealth
}

\foreach \i in {0,...,4} {
  \begin{tikzpicture}[scale=0.8, baseline={(current bounding box.center)}]
    \coordinate (A) at (0,2);
    \coordinate (B) at (-1,1);
    \coordinate (C) at (1,1);
    \coordinate (D) at (0,0);

    \ifnum\i=0
      \draw[bedge] (A) -- (B);
      \draw[bedge] (A) -- (C);
      \draw[bedge] (A) -- (D);
      \draw[bedge] (B) -- (D);
      \draw[bedge] (B) -- (C);
      \draw[bedge] (C) -- (D);
    \else\ifnum\i=1
      \draw[redge] (A) -- (B);
      \draw[redge] (A) -- (C);
      \draw[redge] (A) -- (D);
      \draw[bedge] (B) -- (D);
      \draw[bedge] (B) -- (C);
      \draw[bedge] (C) -- (D);
    \else
      \draw[bedge] (A) -- (B);
      \draw[redge] (A) -- (C);
      \draw[redge] (A) -- (D);
      \draw[redge] (B) -- (D);
      \draw[redge] (B) -- (C);
      \draw[bedge] (C) -- (D);
    \fi\fi

    \ifnum\i=0
      \node[node] at (A) {};
      \node[node] at (B) {};
      \node[node] at (C) {};
      \node[node] at (D) {};
    \else\ifnum\i=1
      \node[fillednode] at (A) {};
      \node[node] at (B) {};
      \node[node] at (C) {};
      \node[node] at (D) {};
    \else\ifnum\i=2
      \node[fillednode] at (A) {};
      \node[fillednode] at (B) {};
      \node[node] at (C) {};
      \node[node] at (D) {};
    \else\ifnum\i=3
      \node[fillednode] at (A) {};
      \node[node] at (B) {};
      \node[node] at (C) {};
      \node[node] at (D) {};
    \else
      \node[node] at (A) {};
      \node[node] at (B) {};
      \node[node] at (C) {};
      \node[node] at (D) {};
    \fi\fi\fi\fi
    \node at (0,-0.8) {(\char\numexpr`a+\i\relax)};  
  \end{tikzpicture}
  \hspace{0.6cm}
}
\caption{The five symmetric four-species ecologies conjectured impossible in \cite{Haas2025}.}
\label{fig:symmetricimpossible}
\end{figure}

\begin{figure}[h]
\centering

\tikzset{
  fillednode/.style={circle, draw=black, fill=black, inner sep=2pt},
  node/.style={circle, draw=black, fill=white, inner sep=2pt},
  edge/.style={line width=1pt},
  redge/.style={edge, red},
  bedge/.style={edge, blue},
  midarrow/.style={
    postaction={decorate},
    decoration={markings, mark=at position 0.50 with {\arrow{>}}}
  },
  >=stealth
}

\foreach \i in {0,...,5} {
  \begin{tikzpicture}[scale=0.8, baseline={(current bounding box.center)}]
    \coordinate (A) at (0,1);
    \coordinate (B) at (-1,0);
    \coordinate (C) at (1,0);
    \coordinate (D) at (0,-1);

    \ifnum\i=0
      \draw[postaction={decorate},decoration={markings,mark=at position 0.50 with {\arrow{>}}},thick] (B) -- (A);
      \draw[postaction={decorate},decoration={markings,mark=at position 0.40 with {\arrow{>}}},thick] (B) -- (C);
      \draw[midarrow, thick] (B) -- (D);
      \draw[redge] (A) -- (D);
      \draw[redge] (A) -- (C);
      \draw[bedge] (C) -- (D);
    \else\ifnum\i=1
      \draw[bedge] (A) -- (B);
      \draw[midarrow, thick] (A) -- (C);
      \draw[midarrow, thick] (A) -- (D);
      \draw[midarrow, thick] (B) -- (C);
      \draw[midarrow, thick] (B) -- (D);
      \draw[bedge] (C) -- (D);
    \else\ifnum\i=2
      \draw[redge] (B) -- (A);
      \draw[midarrow, thick] (C) -- (A);
      \draw[redge] (D) -- (A);
      \draw[redge] (B) -- (C);
      \draw[bedge] (B) -- (D);
      \draw[redge] (C) -- (D);
    \else\ifnum\i=3
      \draw[redge] (A) -- (B);
      \draw[redge] (A) -- (C);
      \draw[redge] (A) -- (D);
      \draw[midarrow, thick] (B) -- (D);
      \draw[midarrow, thick] (C) -- (B);
      \draw[midarrow, thick] (D) -- (C);
    \else\ifnum\i=4
      \draw[midarrow, thick] (A) -- (C);
      \draw[midarrow, thick] (C) -- (B);
      \draw[midarrow, thick] (D) -- (C);
      \draw[midarrow, thick] (B) -- (D);
      \draw[midarrow, thick] (A) -- (D);
      \draw[midarrow, thick] (A) -- (B);
    \else
      \draw[midarrow, thick] (A) -- (B);
      \draw[midarrow, thick] (C) -- (A);
      \draw[redge] (A) -- (D);
      \draw[midarrow, thick] (B) -- (D);
      \draw[redge] (B) -- (C);
      \draw[midarrow, thick] (D) -- (C);
    \fi\fi\fi\fi\fi

    \ifnum\i=0
      \node[fillednode] at (A) {};
      \node[fillednode] at (B) {};
      \node[node] at (C) {};
      \node[node] at (D) {};
    \else\ifnum\i=1
      \node[fillednode] at (A) {};
      \node[fillednode] at (B) {};
      \node[node] at (C) {};
      \node[node] at (D) {};
    \else\ifnum\i=2
      \node[fillednode] at (A) {};
      \node[node] at (B) {};
      \node[node] at (C) {};
      \node[node] at (D) {};
    \else\ifnum\i=3
      \node[fillednode] at (A) {};
      \node[node] at (B) {};
      \node[node] at (C) {};
      \node[node] at (D) {};
    \else\ifnum\i=4
      \node[fillednode] at (A) {};
      \node[node] at (B) {};
      \node[node] at (C) {};
      \node[node] at (D) {};
    \else
      \node[node] at (A) {};
      \node[node] at (B) {};
      \node[node] at (C) {};
      \node[node] at (D) {};
    \fi\fi\fi\fi\fi
    \node at (0,-1.85) {(\char\numexpr`a+\i\relax)};
  \end{tikzpicture}
  \hspace{0.6cm}
}
\caption{The asymmetric four-species ecologies conjectured to be impossible in \cite{Haas2025}.}
\label{fig: asymmetricimpossible}
\end{figure}

A subsequent approach in \cite{Celik2025} reformulated \cref{ProbImpossibleEco} as a non-emptiness problem for semialgebraic sets and transferred part of the analysis to real Grassmannians. Using combinatorial and computational methods based on Grassmann--Pl\"ucker relations in the Grassmannian, the authors proved the impossibility of three of the eleven four-species candidates. 

In the next section, we prove that all eleven four-species candidates are impossible. Several of these arguments extend beyond the four-species setting and yield infinite families of impossible ecological interaction patterns (Section~\ref{sec: 3.1}, \ref{sec: 3.2}). Our arguments use tools from matrix analysis and spectral theory, including $M$-matrix methods and Schur-complement reductions. We recall the required background in the following section.

\subsection{Tools from Matrix Analysis}\label{sec:matrixtools}

A matrix $\bB=(b_{ij})\in\R^{n\times n}$ is a $Z$-\emph{matrix} if all its off-diagonal entries are non-positive, that is, $b_{ij}\leq 0$ for all $i\neq j$. A family of $Z$-matrices of particular interest in many domains is the class of $M$-matrices. A matrix $\bB\in\R^{n\times n}$ is an \emph{$M$-matrix} if it is a $Z$-matrix and all its eigenvalues have strictly positive real parts. 

We will use the following two characterizations of nonsingular $M$-matrices. More than forty equivalent characterizations are known (see, for instance, \cite[Theorem 1]{Plemmons1977}). We record only the two forms needed below. Inequalities between matrices and vectors are understood entrywise.

\begin{proposition}\label{prop:characterizations-M-matrix}

Let $M$ be a $Z$-matrix. Then the following are equivalent:

\begin{enumerate}

    \item $M$ is a nonsingular $M$-matrix;

    \item $M^{-1}\geq 0$;

    \item all principal minors of $M$ are strictly positive.

\end{enumerate}

\end{proposition}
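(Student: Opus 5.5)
The plan is to prove the cycle of implications (1)$\Rightarrow$(2)$\Rightarrow$(1) and (1)$\Leftrightarrow$(3), using the standard decomposition of a $Z$-matrix. Write $M=s\bI-P$ with $s>\max_i m_{ii}$, so that $P\geq 0$. By Perron--Frobenius, $\rho(P)$ is an eigenvalue of $P$ with a nonnegative eigenvector. The eigenvalues of $M$ are the numbers $s-\mu$, with $\mu$ an eigenvalue of $P$, and every such $\mu$ satisfies $|\mu|\leq\rho(P)$. It follows that all eigenvalues of $M$ have positive real part if and only if $s-\rho(P)>0$, that is, $\rho(P)<s$. This reformulation of (1) drives everything else.

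\textbf{(1)$\Rightarrow$(2).} Assume $\rho(P)<s$. The Neumann series then converges:
\[
M^{-1}=s^{-1}\sum_{k\geq 0}(P/s)^k\geq 0 .
\]
\textbf{(2)$\Rightarrow$(1).} Assume $M^{-1}\geq 0$. Take a Perron vector $v\geq 0$, $v\neq 0$, with $Pv=\rho v$ where $\rho=\rho(P)$. Then $Mv=(s-\rho)v$. Since $M$ is invertible, $s\neq\rho$, and we may write $v=(s-\rho)M^{-1}v$. Here $M^{-1}v\geq 0$ and $M^{-1}v\neq 0$ because $M^{-1}$ is injective, so the scalar $s-\rho$ must be positive. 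Hence $\rho<s$.

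\textbf{(1)$\Rightarrow$(3).} Every principal submatrix $M_I$ is again a $Z$-matrix, and $M_I=s\bI-P_I$ with $P_I\geq 0$. Since $P_I$ is a principal submatrix of the nonnegative matrix $P$, we have $\rho(P_I)\leq\rho(P)<s$ by monotonicity of the spectral radius. So $M_I$ is a nonsingular $M$-matrix. Its determinant is positive: the real eigenvalues are positive, and the non-real ones come in conjugate pairs whose products are positive.

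\textbf{(3)$\Rightarrow$(1).} I would argue by induction on $n$ through a Schur complement. The entry $m_{11}>0$ is a $1\times1$ principal minor. Set
\[
S=M_{\hat1\hat1}-m_{11}^{-1}M_{\hat1 1}M_{1\hat1}.
\]
The subtracted term is nonnegative, since it is a product of two nonpositive vectors divided by a positive number. Hence $S$ is a $Z$-matrix. By the quotient formula for determinants, each principal minor of $S$ equals the corresponding principal minor of $M$ containing index $1$ divided by $m_{11}$, so all of them are positive. By induction, $S^{-1}\geq 0$. The block inverse formula then expresses $M^{-1}$ in terms of $m_{11}^{-1}$, $S^{-1}$, and the nonpositive off-diagonal blocks, and each block comes out nonnegative. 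This gives (2), and hence (1).

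The main obstacle is this last direction: the sign bookkeeping in the Schur complement and block-inverse step. A fallback is a continuity argument. All principal minors of $M+t\bI$ are positive for every $t\geq 0$, since they are polynomials in $t$ with nonnegative coefficients that are themselves principal minors of $M$. So $M+t\bI$ is never singular for $t\geq 0$. For large $t$ it is an $M$-matrix, and by continuity of the eigenvalues, $\rho(P)$ can never cross $s+t$ as $t$ decreases to $0$.
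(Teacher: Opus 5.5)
Your proof is correct. It takes a different route from the paper only in the sense that the paper gives no proof at all: it quotes these equivalences from Plemmons' list of characterizations of nonsingular $M$-matrices \cite[Theorem 1]{Plemmons1977}.

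You instead reduce condition (1) to the single inequality $\rho(P)<s$ for the splitting $M=s\bI-P$. Everything then rests on two standard facts about nonnegative matrices: $\rho(P)$ is an eigenvalue with a nonnegative eigenvector, and $\rho(P_I)\leq\rho(P)$ for principal submatrices. The Neumann series for (1)$\Rightarrow$(2), the Perron-vector argument for (2)$\Rightarrow$(1), and the Schur-complement induction for (3)$\Rightarrow$(2) are all sound.

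The sign bookkeeping you were worried about does close up. Write $u=M_{1\hat1}^{\mathsf T}\leq 0$ and $w=M_{\hat1 1}\leq 0$. The four blocks of $M^{-1}$ are:
\begin{itemize}
\item $m_{11}^{-1}+m_{11}^{-2}u^{\mathsf T}S^{-1}w\geq m_{11}^{-1}$,
\item $-m_{11}^{-1}u^{\mathsf T}S^{-1}\geq 0$,
\item $-m_{11}^{-1}S^{-1}w\geq 0$,
\item $S^{-1}\geq 0$.
\end{itemize}
The base case $n=1$ is trivial.

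Your fallback is in fact a complete and shorter proof of (3)$\Rightarrow$(1), and it needs no continuity of eigenvalues. The number $s-\rho(P)$ is a real eigenvalue of $M$. Since $\det(M+t\bI)>0$ for every $t\geq 0$, $M$ has no eigenvalue in $(-\infty,0]$. Hence $s-\rho(P)>0$.

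Your route gives a self-contained argument whose induction step rests on the same Schur-complement determinant identity the paper uses later (Proposition~\ref{lem: schur}). The citation buys brevity, together with the many further equivalent conditions in Plemmons' list, none of which are needed in the paper.
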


We continue here recalling the notion of \emph{Schur complement} of a matrix, an important matrix operation, which we will use repeatedly later in this section. For any matrix $\bB\in\R^{n\times n}$ and any indices $P,Q$ with entries in $[n]$ and length $k<n$ and $s<n$, we denote by $\bB_{PQ}$ the submatrix with rows indexed by $P$ and columns indexed by $Q$. We recall also that for an index set $Q\subset[n]$ of length $k$ we denote with $Q^c$ the complement index set sorted in increasing order. If the submatrix $\bB_{QQ}$ is non-singular, the Schur complement of $\bB_{QQ}$ in $\bB$, denoted by $(\bB / \bB_{QQ})$, is the $(n - k) \times (n - k)$ matrix defined by $(\bB / \bB_{QQ}) = \bB_{Q^cQ^c} - \bB_{Q^cQ} \bB_{QQ}^{-1} \bB_{QQ^c}$.\vspace{0.1cm}

\noindent We will repeatedly use the following standard determinant identity for Schur complements; see \cite[Section~0.8.5]{horn2012matrix}.

\begin{proposition}\label{lem: schur}
Let $Q\subseteq [n]$ and $\bB\in\R^{n\times n}$ be a square matrix with non-singular principal submatrix $\bB_{QQ}$. The following equality holds: $\det \bB=\det(\bB_{QQ})\det  (\bB / \bB_{QQ})$. In particular, if $\det(\bB_{QQ})>0$ and $\det  (\bB / \bB_{QQ})<0$, then $\det \bB<0$.
\end{proposition}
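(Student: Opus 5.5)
We prove the Schur complement determinant identity by a block factorization of $\bB$. First we reduce to the case $Q=\{1,\dots,k\}$. Let $\bP$ be the permutation matrix that lists the indices of $Q$ first and those of $Q^c$ afterwards. Conjugation by a permutation matrix leaves the determinant unchanged, since $\det(\bP\bB\bP^{T})=\det(\bP)^2\det\bB=\det\bB$. The conjugated matrix has the block form
\[
\bP\bB\bP^{T}=\begin{pmatrix}\bB_{QQ} & \bB_{QQ^c}\\ \bB_{Q^cQ} & \bB_{Q^cQ^c}\end{pmatrix}.
\]
Both the blocks and the Schur complement $(\bB/\bB_{QQ})$ are defined purely by row and column index sets. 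Hence it suffices to prove the identity for this block matrix.

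Since $\bB_{QQ}$ is nonsingular, we can use block Gaussian elimination. We claim the factorization
\[
\begin{pmatrix}\bB_{QQ} & \bB_{QQ^c}\\ \bB_{Q^cQ} & \bB_{Q^cQ^c}\end{pmatrix}
=\begin{pmatrix}\bI & 0\\ \bB_{Q^cQ}\bB_{QQ}^{-1} & \bI\end{pmatrix}
\begin{pmatrix}\bB_{QQ} & \bB_{QQ^c}\\ 0 & (\bB/\bB_{QQ})\end{pmatrix}.
\]
To verify it, multiply out the right-hand side. The first block row is $(\bB_{QQ},\ \bB_{QQ^c})$. The second block row is
\[
\bigl(\bB_{Q^cQ},\ \bB_{Q^cQ}\bB_{QQ}^{-1}\bB_{QQ^c}+\bB_{Q^cQ^c}-\bB_{Q^cQ}\bB_{QQ}^{-1}\bB_{QQ^c}\bigr)=(\bB_{Q^cQ},\ \bB_{Q^cQ^c}).
\]
So the product is the original block matrix.

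Next we take determinants of both factors. The first factor is block lower triangular with identity diagonal blocks, so its determinant is $1$. The second factor is block upper triangular, so its determinant is the product of the determinants of its diagonal blocks. This fact follows, for instance, from the Laplace expansion along the first $k$ columns, whose only nonzero contribution comes from the top-left block. Multiplying the two determinants gives $\det\bB=\det(\bB_{QQ})\det(\bB/\bB_{QQ})$.

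The final sign statement follows at once: the product of a positive number and a negative number is negative. The only point requiring any care is the permutation reduction, namely checking that the Schur complement is unchanged by the relabeling. This holds because $(\bB/\bB_{QQ})$ is defined in terms of the submatrices indexed by $Q$ and $Q^c$ alone. Apart from that, the argument is the standard one in \cite[Section~0.8.5]{horn2012matrix}.
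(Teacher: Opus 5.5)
Your proof is correct. The permutation conjugation preserves both $\det\bB$ and the Schur complement, and the block LU factorization together with the determinant rule for block-triangular matrices gives the identity, with the sign statement following immediately. The paper gives no proof of its own; it cites Horn and Johnson, Section~0.8.5, and the block Gaussian elimination you carry out is the standard argument found there, so your route is the intended one.
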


\section{Results on Impossible Ecologies}\label{sec:impossible-ecologies}

In this section, we prove the impossibility of all eleven four-species
ecological networks conjectured in \cite{Haas2025}. Several of the
arguments extend to arbitrary numbers of species and yield infinite
families of impossible ecologies.

We first treat the five symmetric networks in
\cref{fig:symmetricimpossible} using $M$-matrix methods. We then study
families built from obligate prey--predator cycles with inner
competition, which include three of the asymmetric four-species
networks. The remaining three asymmetric networks are handled
individually by Schur-complement arguments. Together, these results
complete the classification of the eleven four-species candidates.

\subsection{Symmetric Ecologies}\label{sec: 3.1}

Among the $11$ ecologies of four species conjectured impossible in \cite{Haas2025}, five are symmetric, meaning that every pairwise interaction is either mutualistic or competitive. These five ecological networks are displayed in \cref{fig:symmetricimpossible}. We prove below that they are not isolated four-dimensional phenomena, but they are instances of two general obstruction mechanisms valid for any number of species.

We start with studying the ecological pattern represented by the ecological network (a) in \cref{fig:symmetricimpossible} in generality. Recall the Lotka--Volterra system \eqref{eq: lv} with parameters $\ba=(a_i)\in \R^n$ and $\bB=(b_{ij})\in \R^{n \times n}$. An ecological network is in \emph{obligate mutualism} if $a_i<0$ for every $i\in[n]$ and $b_{ij}<0$ for every $i,j \in [n]$ with $i\neq j$. Such an ecological network for $n=5$ is represented in \cref{fig: impsymmecoanyn}. This case turns out to be impossible in arbitrary number of species. In order to show this we use the properties of the inverse of an $M$-matrix as listed in \cref{prop:characterizations-M-matrix}. 

\begin{theorem}\label{thm: obligate mutualism}
For any $n\geq 2$, every ecological network in obligate mutualism is impossible.
\end{theorem}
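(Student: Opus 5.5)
We argue by contradiction and reduce everything to \cref{prop:characterizations-M-matrix}. By \cref{prop: stability under perturbation} and the discussion following it, it suffices to rule out parameters $(\ba,\bB)$ with the obligate-mutualism sign pattern for which ${\xstar}=\bB^{-1}\ba>0$ and $\Jx=-\diag({\xstar})\bB$ is Hurwitz. The sign pattern says $b_{ii}>0$ and $b_{ij}<0$ for $i\neq j$, so $\bB$ is a $Z$-matrix. The plan is to show that Hurwitz stability forces $\bB$ to be a nonsingular $M$-matrix. Then $\bB^{-1}\geq 0$ together with $\ba<0$ contradicts feasibility.

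The key step is passing from ``$-\diag({\xstar})\bB$ is Hurwitz'' to ``$\bB$ is an $M$-matrix''. We set $D=\diag({\xstar})$, a positive diagonal matrix. Since $D>0$ and $\bB$ is a $Z$-matrix, the product $D\bB$ is again a $Z$-matrix: its off-diagonal entries $x_i^\star b_{ij}$ are non-positive. By assumption all eigenvalues of $D\bB=-\Jx$ have strictly positive real part, so $D\bB$ is a nonsingular $M$-matrix by definition.

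Next we transfer this back to $\bB$. By \cref{prop:characterizations-M-matrix}, all principal minors of $D\bB$ are positive. Each principal minor satisfies $\det((D\bB)_{QQ})=\prod_{i\in Q}x_i^\star\,\det(\bB_{QQ})$, so all principal minors of $\bB$ are positive too. Since $\bB$ is a $Z$-matrix, the same proposition shows that $\bB$ is a nonsingular $M$-matrix and $\bB^{-1}\geq 0$. (Alternatively, one can use $(D\bB)^{-1}\geq0$ directly, since $\bB^{-1}=(D\bB)^{-1}D\geq 0$.)

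Finally, since $\bB^{-1}$ is nonnegative and nonsingular, every row of $\bB^{-1}$ has at least one strictly positive entry. Combined with $a_j<0$ for all $j$, this gives $x_i^\star=\sum_j(\bB^{-1})_{ij}a_j<0$ for every $i$, contradicting ${\xstar}>0$. No step is a serious obstacle. The only point needing care is the transfer of the $M$-matrix property through the diagonal scaling, which the principal-minor (or inverse) characterization handles cleanly. Note that the argument only uses $b_{ij}\leq 0$ off the diagonal, so it covers the $n=2$ case of \cref{ex:two-species-obligate-mutualism} as well.
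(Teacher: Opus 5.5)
Your proof is correct and follows essentially the same route as the paper: view $-\Jx=\diag({\xstar})\bB$ as a nonsingular $M$-matrix, deduce $\bB^{-1}\geq 0$, and conclude from $\ba<0$. The paper gets $\bB^{-1}\geq 0$ exactly via your parenthetical identity $\bB^{-1}=(-\Jx)^{-1}\diag({\xstar})$, while your principal-minor transfer is the device it uses for \cref{thm: competing mutualism}; your remark that each row of the nonsingular nonnegative $\bB^{-1}$ has a positive entry sharpens the paper's ${\xstar}\leq\mathbf 0$ to ${\xstar}<\mathbf 0$, though the weaker form already contradicts feasibility.
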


\begin{proof}
Assume by contradiction that the associated ecological network is possible, then ${\xstar}=\bB^{-1}\ba>0$ is such that $\Jx$ has only eigenvalues with negative real part. Since $\Jx=-\diag({\xstar})\bB$, we have that the signs of the entries of the matrix $\Jx$ are the same as the signs of the matrix $-\bB$ as $\diag({\xstar})$ is a diagonal matrix with positive elements on the diagonal. Therefore $-\Jx$ is also a $Z$-matrix as $\bB$ is a $Z$-matrix. Moreover, by assumption $\Jx$ has only eigenvalues with negative real part, which implies that $-\Jx$ has only eigenvalues with positive real part. Hence $-\Jx$ is a non-singular $M$-matrix. By \cref{prop:characterizations-M-matrix}, $(-\Jx)^{-1}=\bB^{-1}\diag({\xstar})^{-1}\geq 0$, where we recall that the inequality $\geq 0$ for matrices means entrywise non-negativity. Hence $\bB^{-1}=\bB^{-1}\diag({\xstar})^{-1}\diag({\xstar})\geq 0$. Since $\ba<\mathbf 0$, this forces ${\xstar}=\bB^{-1}\ba\leq \mathbf 0$, contradicting feasibility. Therefore, the ecological network is impossible.
\end{proof} \vspace{0.1cm}

We now focus on the remaining four ecological patterns represented in \cref{fig:symmetricimpossible}. These are all instances of a specific class of ecological networks as in the following. Let $[n]=I\sqcup J$ be a nontrivial partition. An ecological network consists of \emph{two competing mutualistic} groups if species within the same group interact mutualistically, while species in different groups compete; explicitly, $b_{ij}<0$ whenever $i\neq j$ and either $i,j\in I$ or $i,j\in J$, and $b_{ij}>0$ whenever $i\in I$, $j\in J$ or $i\in J$, $j\in I$. Moreover, the ecological network consisting of two competing mutualistic groups has one group in obligate mutualism if $a_i<0$ for all $i\in I$ or for all $i\in J$. An example of an ecological network $n=7$ species with two competing mutualistic groups and one group in obligate mutualism is given in \cref{fig: impsymmecoanyn}.

\begin{figure}[h]
\centering

\tikzset{
  fillednode/.style={circle, draw=black, fill=black, inner sep=2.5pt},
  emptynode/.style={circle, draw=black, fill=white, inner sep=2.5pt},
  bedge/.style={blue, line width=1pt},
  redge/.style={red, line width=1pt},
  group/.style={draw=black!40, rounded corners, dashed, inner sep=8pt}
}

\begin{minipage}{0.45\textwidth}
\centering
\begin{tikzpicture}[scale=0.82]

\coordinate (A) at (0,2);
\coordinate (B) at (-1.35,1);
\coordinate (C) at (1.35,1);
\coordinate (D) at (-0.85,-0.65);
\coordinate (E) at (0.85,-0.65);

\foreach \a/\b in {A/B,A/C,A/D,A/E,B/C,B/D,B/E,C/D,C/E,D/E}{
  \draw[bedge] (\a) -- (\b);
}

\node[emptynode] at (A) {};
\node[emptynode] at (B) {};
\node[emptynode] at (C) {};
\node[emptynode] at (D) {};
\node[emptynode] at (E) {};

\end{tikzpicture}

\end{minipage}
\hfill
\begin{minipage}{0.52\textwidth}
\centering
\begin{tikzpicture}[scale=0.72]

\coordinate (I1) at (-1.15,2);
\coordinate (I2) at (0,2.50);
\coordinate (I3) at (1.15,2);

\coordinate (J1) at (-1.6,-1.5);
\coordinate (J2) at (-0.5,-2.25);
\coordinate (J3) at (0.8,-2.05);
\coordinate (J4) at (1.6,-1.4);

\node[group, fit=(I1)(I2)(I3)] {};
\node[group, fit=(J1)(J2)(J3)(J4)] {};

\foreach \a/\b in {I1/I2,I1/I3,I2/I3}{
  \draw[bedge] (\a) -- (\b);
}

\foreach \a/\b in {J1/J2,J1/J3,J1/J4,J2/J3,J2/J4,J3/J4}{
  \draw[bedge] (\a) -- (\b);
}

\foreach \i in {I1,I2,I3}{
  \foreach \j in {J1,J2,J3,J4}{
    \draw[redge, opacity=0.55] (\i) -- (\j);
  }
}

\node[fillednode] at (I1) {};
\node[emptynode] at (I2) {};
\node[fillednode] at (I3) {};

\node[emptynode] at (J1) {};
\node[emptynode] at (J2) {};
\node[emptynode] at (J3) {};
\node[emptynode] at (J4) {};

\node at (2.1,2.35) {$I$};
\node at (2.5,-1.95) {$J$};

\end{tikzpicture}

\end{minipage}

\caption{Examples of obligate mutualism for $n=5$ (on the left) and two competing mutualistic groups for $n=7$ (on the right).}
\label{fig: impsymmecoanyn}
\end{figure}

The following theorem shows that this entire class of ecological networks is impossible.

\begin{theorem}\label{thm: competing mutualism}
For any $n\geq 2$, every ecological network consisting of two competing mutualistic groups, at least one of which is in obligate mutualism, is impossible.
\end{theorem}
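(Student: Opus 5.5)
The plan is to reduce to the $M$-matrix argument of \cref{thm: obligate mutualism}. The reduction is a signature similarity that turns the competitive cross-group interactions into mutualistic ones. It is then applied not to the full system but to the equilibrium equations of the obligate group alone.

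Suppose, for contradiction, that the network is possible. By \cref{prop: stability under perturbation} we may assume there is ${\xstar}=\bB^{-1}\ba>0$ with $\Jx=-\diag({\xstar})\bB$ Hurwitz. Let $\bS=\diag(s_1,\dots,s_n)$ with $s_i=1$ for $i\in I$ and $s_i=-1$ for $i\in J$, so that $\bS=\bS^{-1}$. Conjugation by $\bS$ does two things:
\begin{itemize}
\item it leaves the entries $b_{ij}$ with $i,j$ in the same group unchanged, and these are negative off the diagonal;
\item it flips the sign of the cross-group entries, which are positive.
\end{itemize}
Hence $\bS\bB\bS$ is a $Z$-matrix. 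Since $\diag({\xstar})$ commutes with $\bS$, we get
\[
\bS(-\Jx)\bS=\diag({\xstar})\,\bS\bB\bS .
\]
This is a $Z$-matrix with the same spectrum as $-\Jx$, so all its eigenvalues have positive real part, and it is a nonsingular $M$-matrix. By \cref{prop:characterizations-M-matrix}, its inverse $(\bS\bB\bS)^{-1}\diag({\xstar})^{-1}$ is entrywise nonnegative, and hence so is $(\bS\bB\bS)^{-1}$. The same argument as in \cref{thm: obligate mutualism}, after multiplying by $\diag({\xstar})$, shows that $\bS\bB\bS$ is itself a nonsingular $M$-matrix. By the principal-minor characterization in \cref{prop:characterizations-M-matrix}, every principal submatrix of $\bS\bB\bS$ is also a nonsingular $M$-matrix.

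Now assume without loss of generality that $I$ is the group in obligate mutualism; the other case follows by exchanging the roles of $I$ and $J$. The principal submatrix of $\bS\bB\bS$ indexed by $I$ is exactly $\bB_{II}$, since $s_i=1$ on $I$. Therefore $\bB_{II}$ is a nonsingular $M$-matrix and $\bB_{II}^{-1}\geq 0$. The $I$-block of the equilibrium equation $\bB{\xstar}=\ba$ reads
\[
\bB_{II}\,{\xstar}_I=\ba_I-\bB_{IJ}\,{\xstar}_J .
\]
Here $\ba_I<\mathbf 0$ by obligate mutualism, $\bB_{IJ}$ has positive entries (competition), and ${\xstar}_J>0$, so the right-hand side is strictly negative. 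Each row of the nonsingular nonnegative matrix $\bB_{II}^{-1}$ is nonzero with nonnegative entries. Multiplying by $\bB_{II}^{-1}$ therefore gives ${\xstar}_I<\mathbf 0$, contradicting feasibility. This argument also covers a group of size one, where $\bB_{II}=(b_{ii})$ with $b_{ii}>0$.

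The main step to get right is the passage from ``$\diag({\xstar})\,\bS\bB\bS$ is an $M$-matrix'' to ``$\bB_{II}$ is an $M$-matrix.'' I would argue it through nonnegativity of the inverse and then principal minors, or equivalently note that a positive diagonal scaling of a $Z$-matrix preserves the sign of its principal minors. Once this is in place, the only remaining input is the sign bookkeeping in the block equation.
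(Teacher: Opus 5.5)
Your proposal is correct and follows the paper's proof essentially step for step: the same signature similarity by $\bS$, the conclusion that $\diag(\xstar)\,\bS\bB\bS$ is a nonsingular $M$-matrix, the transfer of the $M$-matrix property to the diagonal block of the obligate group, and the sign argument on that block of the equilibrium equation. The differences are cosmetic. You reach the block $\bB_{II}$ through nonnegativity of $(\bS\bB\bS)^{-1}$, whereas the paper directly computes the principal minors as $\det(\bB_{KK})\prod_{k\in K}x^\star_k>0$. You also state explicitly that the rows of $\bB_{II}^{-1}$ are nonzero, which is what makes the final inequality strict.
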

\begin{proof}
Let $[n]=I\sqcup J$ be the partition of the species into the two competing mutualistic groups. Without loss of generality, assume that the group $J$ is in obligate mutualism, e.g., as in \cref{fig: impsymmecoanyn}. Assume by contradiction that the associated ecological network is possible, then ${\xstar}=\bB^{-1}\ba>0$ and the matrix $\Jx $ has only eigenvalues with negative real part. Let $\bS$ be the $n \times n$ diagonal matrix defined by $(\bS)_{kk} = 1$, if $k \in I$ and $(\bS)_{kk} = -1$, if $k \in J$. Notice that $\bS^{-1}=\bS$. Consider the matrix $\bS \bB \bS$. We analyze its off-diagonal entries $(\bS \bB \bS)_{kl}$ for $k \neq l$:
\begin{itemize}
    \item If $k, l \in I$, $(\bS \bB \bS)_{kl} = 1\cdot b_{kl} \cdot 1 = b_{kl} < 0$ (by mutualism within $I$).
    \item If $k, l \in J$, $(\bS \bB \bS)_{kl} = -1 \cdot b_{kl} \cdot -1 = b_{kl} < 0$ (by mutualism within $J$).
    \item If $k \in I$ and $l \in J$ (or vice versa), $(\bS \bB \bS)_{kl} = 1 \cdot b_{kl} \cdot (-1) = -b_{kl} < 0$ (by competition between $I$ and $J$).
\end{itemize}
Thus, all off-diagonal entries of $\bS \bB \bS$ are strictly negative, making it a $Z$-matrix. We now apply the similarity transformation induced by $\bS$ to $-\Jx$ and, using commutativity of diagonal matrices, we obtain
\begin{equation*}\label{eq: 2competingmutialisticgroups}
    -\bS \Jx \bS = \bS \diag({\xstar}) \bB \bS = \diag({\xstar}) (\bS \bB \bS).
\end{equation*}
Therefore, -$\bS \Jx \bS$ is a $Z$-matrix as $ \diag({\xstar})$ is a diagonal matrix with positive diagonal and $\bS \bB \bS$ is a $Z-$matrix. The matrix $-\bS \Jx \bS$ has eigenvalues with strictly positive real parts as it is similar to the matrix $-\Jx$ and thus they have the same eigenvalues. Therefore, $-\bS \Jx \bS$ is a non-singular $M$-matrix. By \cref{prop:characterizations-M-matrix} all principal minors of $-\bS \Jx \bS$ are strictly positive. Moreover, since $\bS$ and $\diag({\xstar})$ are diagonal matrices, any principal minor of $\diag({\xstar})\bS \bB \bS$ indexed by $K \subseteq [n]$ is given by $\det(\diag({\xstar})_{KK}\bS_{KK} \bB_{KK} \bS_{KK}) = \det(\diag({\xstar})_{KK})\det(\bS_{KK})^2 \det(\bB_{KK}) = \det(\bB_{KK})\prod_{k\in K}({\xstar})_k$. Therefore, we obtain that all principal minors of $\bB$ are strictly positive as ${\xstar}>0$. 
Now, consider the equilibrium equations $\bB {\xstar} = \ba$ restricted to the obligate mutualism group $J$. For any $j \in J$, we have 
\begin{equation}\label{eq: contradicting 2compmut}
    \sum_{k \in J} b_{jk} ({\xstar})_k + \sum_{i \in I} b_{ji} ({\xstar})_i = a_j.
\end{equation}
Since $J$ is in obligate mutualism ($a_j < 0$), the groups compete ($b_{ji} > 0$), and the equilibrium is feasible ($({\xstar})_i > 0$), from equation \labelcref{eq: contradicting 2compmut} we get $\sum_{k \in J} b_{jk} ({\xstar})_k < 0$ for all $j\in J$. In matrix notation, this reads $\bB_{JJ} ({\xstar})_J < 0$. We already established that $\bB$ has strictly positive principal minors, so its submatrix $\bB_{JJ}$ is a $Z$-matrix with positive principal minors. Therefore by \cref{prop:characterizations-M-matrix}, the matrix $\bB_{JJ}$ is itself a non-singular $M$-matrix. Using \cref{prop:characterizations-M-matrix} again, we obtain that $\bB_{JJ}^{-1} \ge 0$. Multiplying to the left the strict inequality $\bB_{JJ} ({\xstar})_J < 0$ by the non-negative matrix $\bB_{JJ}^{-1}$, gives $({\xstar})_J < 0$. This contradicts the feasibility assumption ${\xstar} > 0$, showing that the ecology is impossible.
\end{proof} \vspace{0.1cm}

\noindent Fixing $n=4$, \cref{thm: obligate mutualism} and  \cref{thm: competing mutualism} give the following corollary.

\begin{corollary}
    The five symmetric ecologies of \cref{fig:symmetricimpossible} are impossible.
\end{corollary}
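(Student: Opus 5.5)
The corollary follows by matching each of the five networks in \cref{fig:symmetricimpossible} to the hypotheses of \cref{thm: obligate mutualism} or \cref{thm: competing mutualism} with $n=4$. I will label the vertices top, left, right, bottom as $1,2,3,4$ and read off the signs of $a_i$ and of the off-diagonal interactions from each picture.

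Network (a) has all six edges mutualistic (blue) and all four nodes dying, so $a_i<0$ and $b_{ij}<0$ for all $i\neq j$. This is obligate mutualism, and \cref{thm: obligate mutualism} applies directly.

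Network (b) has vertex $1$ competing (red) with $2,3,4$, while $2,3,4$ are pairwise mutualistic. Take $I=\{1\}$ and $J=\{2,3,4\}$. Edges within each group are mutualistic and edges across groups are competitive, so the network consists of two competing mutualistic groups. Species $2,3,4$ are dying, so $J$ is in obligate mutualism, and \cref{thm: competing mutualism} applies. Here $I$ is a singleton, which the definition allows since the within-$I$ condition is vacuous.

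Networks (c)--(e) share one edge pattern: $1$--$2$ and $3$--$4$ are mutualistic, and the four edges $1$--$3$, $1$--$4$, $2$--$3$, $2$--$4$ are competitive. Take $I=\{1,2\}$ and $J=\{3,4\}$; this partition realizes two competing mutualistic groups. In all three networks, nodes $3$ and $4$ are dying, so $J$ is in obligate mutualism. Node $1$ is growing in (c) and (d), node $2$ is growing in (c), and all nodes are dying in (e). The signs on $I$ are irrelevant to the hypothesis, so \cref{thm: competing mutualism} applies in each case.

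The only possible obstacle is reading the figure correctly, in particular checking that a consistent bipartition exists. This holds because each of the cross edges is red and each within-group edge is blue. Apart from that, the corollary is immediate.
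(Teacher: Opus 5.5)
Your proposal is correct and is the paper's argument: the corollary follows by specializing \cref{thm: obligate mutualism} to network (a) and \cref{thm: competing mutualism} to networks (b)--(e) with $n=4$. Your reading of the figure is accurate, namely the bipartitions $\{1\}\sqcup\{2,3,4\}$ and $\{1,2\}\sqcup\{3,4\}$ with the second group dying in every case, and it spells out the matching that the paper leaves implicit.
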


\subsection{Impossibility of Prey-predator Cycles}\label{sec: 3.2}

We next study four families of ecological networks built from obligate prey-predator cycles with inner competition. Besides collections of such cycles, we allow either a single competing species, a self-sustaining super-predator, or an obligate mutualistic pair. In each case, the parity of the number of cycles determines an obstruction to stable coexistence.

\paragraph{Obligate cycle networks.} Let $n \geq 4$ and $k \geq 1$. We say that an ecological network is a collection of $k$ \emph{disjoint obligate prey-predator cycles with inner competition} if $a_i<0$ for every $i\in[n]$, $b_{ii}>0$ for every $i\in[n]$, and, after a relabeling of the species, they can be partitioned into $k$ disjoint directed cycles $C_1, \dots, C_k$, each of length at least $3$, such that the following conditions hold:
\begin{itemize}
    \item for every $i\in[n]$, if $i^+$ denotes the unique prey of $i$ in its directed cycle, then $b_{i,i^+}<0$ and $b_{i^+,i}>0$;
    \item all other pairs of species compete, namely $b_{ij}>0$ and $b_{ji}>0$ whenever $j\notin\{i,i^+\}$ and $i\neq j^+$.
\end{itemize}

\begin{figure}[htbp]
\centering

\tikzset{
  emptynode/.style={circle, draw=black, fill=white, inner sep=2pt},
  redge/.style={red, line width=0.8pt},
  cyclearrow/.style={
    thick,
    postaction={
      decorate,
      decoration={
        markings,
        mark=at position 0.55 with {\arrow{>}}
      }
    }
  },
  >=stealth
}

\begin{minipage}{0.31\textwidth}
\centering
\begin{tikzpicture}[scale=0.65]
  \coordinate (V1) at (90:1.6);    
  \coordinate (V2) at (162:1.6);   
  \coordinate (V3) at (234:1.6);   
  \coordinate (V4) at (306:1.6);   
  \coordinate (V5) at (18:1.6);    

  \foreach \a/\b in {1/4,1/5,2/3,2/4,3/5} {
    \draw[redge] (V\a) -- (V\b);
  }

  \draw[cyclearrow] (V1) -- (V3);
  \draw[cyclearrow] (V3) -- (V4);
  \draw[cyclearrow] (V4) -- (V5);
  \draw[cyclearrow] (V5) -- (V2);
  \draw[cyclearrow] (V2) -- (V1);

  \foreach \i in {1,...,5} {
    \node[emptynode] at (V\i) {};
  }
\end{tikzpicture}
\end{minipage}\hfill
\begin{minipage}{0.31\textwidth}
\centering
\begin{tikzpicture}[scale=0.65]
  \foreach \i/\ang in {1/90,2/30,3/-30,4/-90,5/-150,6/150} {
    \coordinate (V\i) at (\ang:1.6);
  }

  \foreach \a/\b in {1/3,1/4,1/5,2/4,2/5,2/6,3/5,3/6,4/6} {
    \draw[redge] (V\a) -- (V\b);
  }

  \draw[cyclearrow] (V1) -- (V2);
  \draw[cyclearrow] (V2) -- (V3);
  \draw[cyclearrow] (V3) -- (V4);
  \draw[cyclearrow] (V4) -- (V5);
  \draw[cyclearrow] (V5) -- (V6);
  \draw[cyclearrow] (V6) -- (V1);

  \foreach \i in {1,...,6} {
    \node[emptynode] at (V\i) {};
  }
\end{tikzpicture}
\end{minipage}\hfill
\begin{minipage}{0.31\textwidth}
\centering
\begin{tikzpicture}[scale=0.65]
  
  \path (0, 1.4) + (90:0.65) coordinate (V1);
  \path (0, 1.4) + (210:0.65) coordinate (V2);
  \path (0, 1.4) + (330:0.65) coordinate (V3);

  \path (-1.4, -0.9) + (90:0.65) coordinate (V4);
  \path (-1.4, -0.9) + (210:0.65) coordinate (V5);
  \path (-1.4, -0.9) + (330:0.65) coordinate (V6);

  \path (1.4, -0.9) + (135:0.75) coordinate (V7);
  \path (1.4, -0.9) + (225:0.75) coordinate (V8);
  \path (1.4, -0.9) + (315:0.75) coordinate (V9);
  \path (1.4, -0.9) + (45:0.75) coordinate (V10);

  \foreach \i in {1,2,3} {
    \foreach \j in {4,...,10} {
      \draw[redge] (V\i) -- (V\j);
    }
  }
  
  \foreach \i in {4,5,6} {
    \foreach \j in {7,...,10} {
      \draw[redge] (V\i) -- (V\j);
    }
  }
  
  \draw[redge] (V7) -- (V9);
  \draw[redge] (V8) -- (V10);

  \draw[cyclearrow] (V1) -- (V2);
  \draw[cyclearrow] (V2) -- (V3);
  \draw[cyclearrow] (V3) -- (V1);

  \draw[cyclearrow] (V4) -- (V5);
  \draw[cyclearrow] (V5) -- (V6);
  \draw[cyclearrow] (V6) -- (V4);

  \draw[cyclearrow] (V7) -- (V8);
  \draw[cyclearrow] (V8) -- (V9);
  \draw[cyclearrow] (V9) -- (V10);
  \draw[cyclearrow] (V10) -- (V7);

  \foreach \i in {1,...,10} {
    \node[emptynode] at (V\i) {};
  }
\end{tikzpicture}
\end{minipage}

\caption{Examples of obligate prey-predator cycles with inner competition. \textbf{Left and Center}: Single connected networks for $n=5$ and $n=6$, where all species form one directed cycle. \textbf{Right}: A network with $n=10$ species consisting of $k=3$ disjoint cycles (two of length 3, one of length 4). In all cases, species within a cycle follow a directed prey-predator loop, while all other pairs of species (including cross-cycle interactions) compete (red edges).}
\end{figure}
\begin{theorem}\label{thm: multiple odd cycles competitive}
For any $n\geq 4$, every ecology consisting of an odd number $k$ of disjoint obligate prey-predator cycles with inner competition is impossible.
\end{theorem}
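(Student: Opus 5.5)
The plan is to convert Hurwitz stability into a sign condition on $\det\bB$, and then show that feasibility with $\ba<\mathbf 0$ forces the opposite sign whenever the number $k$ of cycles is odd. The first step is immediate. If ${\xstar}=\bB^{-1}\ba>0$ and $\Jx=-\diag({\xstar})\bB$ is Hurwitz, then its real eigenvalues are negative and its non-real eigenvalues come in conjugate pairs, so $\sign\det\Jx=(-1)^n$. Since
\[
\det\Jx=(-1)^n\det\bB\prod_i x^\star_i ,
\]
this gives $\det\bB>0$, exactly as in \cref{ex:two-species-obligate-mutualism}. The rest of the proof should show $\det\bB<0$ under the feasibility and sign constraints.

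The second step extracts the information that feasibility gives. Fix a cycle $C$ of length $L\geq 3$. For each $i\in C$, the $i$-th row of $\bB{\xstar}=\ba$ reads
\[
b_{ii}x^\star_i+b_{i,i^+}x^\star_{i^+}+(\text{terms } b_{ij}x^\star_j \text{ with } b_{ij}>0)=a_i<0 .
\]
Only the predation coefficient $b_{i,i^+}$ in this row is negative, because the prey-side entry $b_{i,i^-}$ and all competition entries are positive. Hence $|b_{i,i^+}|\,x^\star_{i^+}>b_{ii}x^\star_i$. Multiplying these inequalities around the cycle, the $x^\star$'s cancel and we get
\[
\prod_{i\in C}|b_{i,i^+}|>\prod_{i\in C}b_{ii}.
\]
Now consider the matrix $\bB^{C}_0$ that keeps only the diagonal and predation entries of $\bB_{CC}$. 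Its determinant has two nonzero permutation terms: the identity, and the $L$-cycle, which has sign $(-1)^{L-1}$ and weight $\prod b_{i,i^+}=(-1)^L\prod|b_{i,i^+}|$. Therefore
\[
\det\bB^{C}_0=\prod b_{ii}-\prod|b_{i,i^+}|<0 .
\]
The block-diagonal matrix $\bB_0=\bigoplus_{r=1}^k \bB^{C_r}_0$ thus has $\det\bB_0$ of sign $(-1)^k$, which is negative when $k$ is odd. This is where the parity hypothesis enters.

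The third step, which I expect to be the main obstacle, transfers this sign from $\bB_0$ to $\bB$, that is, shows that the additional positive entries cannot flip the sign of the determinant. I would first normalise by $\bM=\bB\diag({\xstar})$. This preserves the sign pattern and the sign of the determinant, and it satisfies $\bM\mathbf 1=\ba<\mathbf 0$. I would then deform linearly from $\bM$ to $\bM_0=\bB_0\diag({\xstar})$ by scaling all positive off-diagonal entries (the competition entries and the prey-side entries $b_{i^+,i}$) by $t\in[0,1]$. Decreasing positive entries keeps every row sum negative, so along the path there is a positive vector $\mathbf 1$ with $\bM_t\mathbf 1<\mathbf 0$. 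The goal is then to prove $\det\bM_t\neq 0$ for all $t$, so that by continuity $\sign\det\bM=\sign\det\bM_0=(-1)^k$.

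To exclude singularity along the path, I would first try an $M$-matrix argument on a signed rescaling: show that any kernel vector of $\bM_t$ would contradict the strict negativity of the row sums together with the fact that each row has a single negative off-diagonal entry. Failing that, I would try Schur complements with respect to one cycle block, using \cref{lem: schur}, and argue by induction on $k$, pairing cycles so that each pair contributes a positive factor and one cycle is left over. If both approaches stall, a fallback is to compute $\det\bB$ directly through its permutation expansion, grouped by the cycle structure, and bound the terms using the inequalities from the second step.
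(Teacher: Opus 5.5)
Your Steps 1 and 2 are correct, and the homotopy in Step 3 is set up soundly. Shrinking only the nonnegative off-diagonal entries keeps $\bM_t\mathbf 1\le\ba<\mathbf 0$, and $\bM_0=\bB_0\diag(\xstar)$ has determinant of sign $(-1)^k$. But the proof stops exactly where the content of the theorem lies. You state that $\det\bM_t\neq 0$ for all $t\in[0,1]$ is the goal and then list three strategies to try, without carrying any of them out. As written, nothing rules out a sign change of $\det\bM_t$ along the path, so the argument is incomplete.

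The missing ingredient is that the unique negative entries $-\beta_i$ of the rows sit at positions $(i,\sigma(i))$, where $\sigma(i)=i^+$ is a \emph{permutation} of $[n]$, because the disjoint cycles cover all species. Combined with $\bM_t\mathbf 1<\mathbf 0$, this gives $\beta_i>\sum_{j\neq i^+}(\bM_t)_{ij}$ for every $i$. In other words, if $\bP$ is the permutation matrix of $\sigma$, then $-\bM_t\bP$ is a strictly diagonally dominant $Z$-matrix.

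Your kernel-vector idea then works. If $\bM_t v=0$, choose $j$ with $|v_j|$ maximal and read row $i=\sigma^{-1}(j)$. This gives $\beta_i|v_j|\le\sum_{l\neq j}(\bM_t)_{il}|v_l|<\beta_i|v_j|$ unless $v=0$. Note that ``one negative entry per row'' alone would not suffice; the bijectivity of $\sigma$ is what makes the argument close.

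Once you have this, the homotopy and Step 2 become unnecessary. $-\bM\bP$ is a nonsingular $M$-matrix, so $\det(-\bM\bP)>0$. With $\det\bP=(-1)^{n-k}$ you get directly
\[
\sign\det\bB=\sign\det\bM=(-1)^n(-1)^{n-k}=(-1)^k,
\]
which is exactly the paper's proof. Your Schur-complement induction and permutation-expansion fallbacks are not needed, and neither is developed far enough to see that it would close the gap on its own.
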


\begin{proof}
Assume by contradiction that the ecology is possible, and write
${\xstar}=\bB^{-1}\ba$ for the feasible and stable equilibrium, so that
$\bB{\xstar}=\ba$. Let $\widehat{\bB}\coloneqq \bB\diag({\xstar})$. Since $\diag({\xstar})$ has positive determinant, $\det(\widehat{\bB})$ and $\det(\bB)$ have the same sign. It is immediate to see that the sum of the entries of the $i$-th row of $\widehat{\bB}$ is $a_i$, and hence, negative. For every $i\in[n]$, let $i^+$ denote the unique prey of species $i$ in its cycle, and write $\widehat b_{i,i^+}=-\beta_i$ with $\beta_i>0$. All the other entries in the $i$-th row of $\widehat{\bB}$ are positive. Together with the previous observation, this gives $\beta_i>\sum_{j\neq i^+}\widehat b_{ij}$ for every $i\in[n]$. Let $\bP$ be the permutation matrix associated with the mapping $\sigma(i)=i^+$. Multiplying $\widehat{\bB}$ by $\bP$ on the right moves the negative entries $-\beta_i$ onto the main diagonal. Because the permutation $\sigma$ decomposes into $k$ disjoint cycles of lengths $L_1, \dots, L_k$ (where $\sum_{r=1}^k L_r = n$), the determinant of $\bP$ is given by the product of the determinants of these cycles:
\[
    \det(\bP) = \prod_{r=1}^k (-1)^{L_r - 1} = (-1)^{\sum_{r=1}^k (L_r - 1)} = (-1)^{n-k}.
\]
Set $\bM\coloneqq -\widehat{\bB}\bP$. Then $\bM$ has positive diagonal entries $m_{ii}=\beta_i$ and non-positive off-diagonal entries, i.e.\ it is a $Z$-matrix. Moreover, the matrix $\bM$ is strictly diagonally dominated, since $m_{ii}=\beta_i>\sum_{j\neq i^+}\widehat b_{ij}=\sum_{j\neq i}|m_{ij}|$. Thus, the eigenvalues of $\bM$ have strictly positive real part, so $\bM$ is a non-singular $M$-matrix. By \cref{prop:characterizations-M-matrix}, $\det(\bM)>0$, and therefore, $\sign(\det(\widehat{\bB}\bP))=(-1)^n$. 

Combining the considerations above with the straightforward observation that $\det(\bP)^2=1$ (since $\bP$ is an orthogonal permutation matrix), yields
\begin{align*}
      \sign(\det(\bB))
    =
    \sign(\det(\widehat{\bB}))
    & =
    \sign(\det(\widehat{\bB}\bP))\sign(\det(\bP)) \\
    & =
    (-1)^n(-1)^{n-k}
    =
    (-1)^{2n-k}
    =
    (-1)^k.  
\end{align*}
Since $k$ is odd, we conclude that $\det(\bB)<0$, contradicting the stability assumption and proving that the ecology is impossible as $\det\Jx=(-1)^n\det(\diag({\xstar}))\det(\bB)$ implies that $\det(\bB)>0$.
\end{proof} 

\paragraph{Cycle networks with a competing out-group.} Let $n \geq 4$ and $k \geq 1$. We say that an ecological network consists of \emph{a single species growing by itself and competing against everyone else}, alongside a collection of $k$ disjoint obligate prey-predator cycles with inner competition if, after a relabeling of the species, species $1$ satisfies $a_1>0$ and $b_{11}>0$, while for all other species $i \in \{2, \dots, n\}$ we have $a_i<0$ and $b_{ii}>0$, and the following conditions hold:
\begin{itemize}
    \item species $1$ competes with all other species, namely $b_{1j}>0$ and $b_{j1}>0$ for every $j \in \{2, \dots, n\}$;
    \item the remaining species $\{2, \dots, n\}$ can be partitioned into $k$ disjoint directed cycles $C_1, \dots, C_k$, each of length at least $3$, such that for every $i \in \{2, \dots, n\}$, if $i^+$ denotes the unique prey of $i$ in its directed cycle, then $b_{i,i^+}<0$ and $b_{i^+,i}>0$;
    \item all other pairs of species compete, namely $b_{ij}>0$ and $b_{ji}>0$ whenever $i,j\in\{2,\dots,n\}$, $j\notin\{i,i^+\}$, and $i\neq j^+$.
\end{itemize}



\begin{figure}[htbp]
\centering

\tikzset{
  emptynode/.style={circle, draw=black, fill=white, inner sep=2pt},
  blacknode/.style={circle, draw=black, fill=black, inner sep=2pt},
  redge/.style={red, line width=0.8pt},
  bluedge/.style={blue, line width=1.2pt},
  cyclearrow/.style={
    thick,
    postaction={
      decorate,
      decoration={
        markings,
        mark=at position 0.55 with {\arrow{>}}
      }
    }
  },
  >=stealth
}

\begin{minipage}{0.26\textwidth}
\centering
\begin{tikzpicture}[scale=0.65]
  \coordinate (V1) at (90:1.55);
  \coordinate (V2) at (162:1.55);
  \coordinate (V3) at (234:1.55);
  \coordinate (V4) at (306:1.55);
  \coordinate (V5) at (18:1.55);

  \foreach \i in {2,...,5} {
    \draw[redge] (V1) -- (V\i);
  }

  \draw[redge] (V2) -- (V4);
  \draw[redge] (V3) -- (V5);

  \draw[cyclearrow] (V2) -- (V3);
  \draw[cyclearrow] (V3) -- (V4);
  \draw[cyclearrow] (V4) -- (V5);
  \draw[cyclearrow] (V5) -- (V2);

  \node[blacknode] at (V1) {};
  \foreach \i in {2,...,5} {
    \node[emptynode] at (V\i) {};
  }
\end{tikzpicture}
\end{minipage}\hfill
\begin{minipage}{0.28\textwidth}
\centering
\begin{tikzpicture}[scale=0.65]
  \coordinate (V1) at (0, 3.0);

  \coordinate (V2) at (90:1.55);
  \coordinate (V3) at (162:1.55);
  \coordinate (V4) at (234:1.55);
  \coordinate (V5) at (306:1.55);
  \coordinate (V6) at (18:1.55);

  \draw[redge] (V2) -- (V4);
  \draw[redge] (V2) -- (V5);
  \draw[redge] (V3) -- (V5);
  \draw[redge] (V3) -- (V6);
  \draw[redge] (V4) -- (V6);

  \foreach \i in {2,...,6} {
    \draw[cyclearrow] (V1) -- (V\i);
  }

  \draw[cyclearrow] (V2) -- (V3);
  \draw[cyclearrow] (V3) -- (V4);
  \draw[cyclearrow] (V4) -- (V5);
  \draw[cyclearrow] (V5) -- (V6);
  \draw[cyclearrow] (V6) -- (V2);

  \node[blacknode] at (V1) {};
  \foreach \i in {2,...,6} {
    \node[emptynode] at (V\i) {};
  }
\end{tikzpicture}
\end{minipage}\hfill
\begin{minipage}{0.44\textwidth}
\centering
\begin{tikzpicture}[scale=0.65]
  \coordinate (V1) at (-0.8, 2.8);
  \coordinate (V2) at (0.8, 2.8);

  \path (-2.0, 0) + (90:1.1) coordinate (V3);
  \path (-2.0, 0) + (210:1.1) coordinate (V4);
  \path (-2.0, 0) + (330:1.1) coordinate (V5);

  \path (2.0, 0) + (90:1.1) coordinate (V6);
  \path (2.0, 0) + (210:1.1) coordinate (V7);
  \path (2.0, 0) + (330:1.1) coordinate (V8);

  \foreach \i in {1,2} {
    \foreach \j in {3,...,8} {
      \draw[redge] (V\i) -- (V\j);
    }
  }

  \foreach \i in {3,4,5} {
    \foreach \j in {6,7,8} {
      \draw[redge] (V\i) -- (V\j);
    }
  }

  \draw[bluedge] (V1) -- (V2);

  \draw[cyclearrow] (V3) -- (V4); \draw[cyclearrow] (V4) -- (V5); \draw[cyclearrow] (V5) -- (V3);
  \draw[cyclearrow] (V6) -- (V7); \draw[cyclearrow] (V7) -- (V8); \draw[cyclearrow] (V8) -- (V6);

  \node[emptynode] at (V1) {};
  \node[emptynode] at (V2) {};
  \foreach \i in {3,...,8} {
    \node[emptynode] at (V\i) {};
  }
\end{tikzpicture}
\end{minipage}

\caption{Variations of ecological networks with isolated out-groups (black nodes) interacting with prey-predator cycles (white nodes). \textbf{Left}: A single species ($n=5$) competes against a 4-cycle. \textbf{Center}: A super-predator ($n=6$) consumes all members of a 5-cycle. \textbf{Right}: A pair in obligate mutualism ($n=8$, blue edge) competes against two disjoint 3-cycles. In all cases, cross-cycle species and non-consecutive species within cycles compete (red edges).}
\end{figure}

\begin{theorem}\label{THM:SingSpecCompetingOddCycles}
For any $n\geq 4$, every ecology consisting of a single species growing by itself and competing against everyone else, alongside an odd number $k$ of disjoint obligate prey-predator cycles with inner competition, is impossible.
\end{theorem}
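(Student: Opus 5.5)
The plan is to adapt the determinant-sign argument of \cref{thm: multiple odd cycles competitive}. The out-group species $1$ would break the $Z$-matrix structure, so it will be split off using a Schur complement. Suppose, for contradiction, that $\xstar=\bB^{-1}\ba>0$ and $\Jx$ is Hurwitz. Then $\det\Jx=(-1)^n\det(\diag(\xstar))\det(\bB)$ has sign $(-1)^n$, which forces $\det\bB>0$. The goal is to show that instead $\sign\det\bB=(-1)^k=-1$.

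Set $\widehat{\bB}\coloneqq\bB\diag(\xstar)$, so that $\det\widehat{\bB}$ and $\det\bB$ have the same sign. Let $Q=\{2,\dots,n\}$. For each $i\in Q$, row $i$ of $\widehat{\bB}$ sums to $a_i<0$, and it has exactly one negative entry $\widehat b_{i,i^+}=-\beta_i$. All its other entries are positive, including $\widehat b_{i1}=b_{i1}x^\star_1>0$. Hence
\[
\beta_i>\widehat b_{i1}+\sum_{j\in Q\setminus\{i^+\}}\widehat b_{ij}.
\]
Row $1$ has only positive entries. Let $\bP$ be the permutation matrix of $\sigma$, where $\sigma(1)=1$ and $\sigma(i)=i^+$ for $i\in Q$. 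As in the proof of \cref{thm: multiple odd cycles competitive}, $\det\bP=(-1)^{(n-1)-k}$. Let $\bS=\diag(1,-1,\dots,-1)$, so $\det\bS=(-1)^{n-1}$, and set $\bM\coloneqq\bS\widehat{\bB}\bP$.

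The entries of $\bM$ then have the following signs. The entry $m_{11}=\widehat b_{11}>0$, and $\bM_{1Q}\geq0$. The column $\bM_{Q1}=-(\widehat b_{i1})_{i\in Q}<0$. The block $\bM_{QQ}$ has diagonal entries $\beta_i$ and non-positive off-diagonal entries. By the displayed inequality (after dropping the column-$1$ term), $\bM_{QQ}$ is a strictly diagonally dominant $Z$-matrix. Hence it is a non-singular $M$-matrix, and by \cref{prop:characterizations-M-matrix} we get $\det\bM_{QQ}>0$ and $\bM_{QQ}^{-1}\geq0$.

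The key step is the scalar Schur complement
\[
\bM/\bM_{QQ}=m_{11}-\bM_{1Q}\bM_{QQ}^{-1}\bM_{Q1}.
\]
The product $\bM_{1Q}\bM_{QQ}^{-1}\bM_{Q1}$ is (nonnegative row)(nonnegative matrix)(negative column), so it is $\leq0$. Therefore $\bM/\bM_{QQ}\geq m_{11}>0$. By \cref{lem: schur}, $\det\bM>0$. This gives
\[
\sign\det\widehat{\bB}=\sign\det\bS\cdot\sign\det\bP=(-1)^{n-1}(-1)^{n-1-k}=(-1)^k=-1,
\]
since $k$ is odd. This contradicts $\det\bB>0$.

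The main obstacle I expect is getting the signs right in the sign-flip by $\bS$ and the permutation by $\bP$. These must make both $\bM_{QQ}$ a $Z$-matrix and the correction term in the Schur complement non-positive. As written, the argument does not appear to use $a_1>0$ at all. I would double-check this, since it suggests the statement holds regardless of the sign of $a_1$.
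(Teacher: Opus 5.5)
Your proof is correct and is essentially the paper's argument: the paper's $\mathbf{D}$ and $\mathbf{V}=\mathbf{D}\widehat{\bB}\bP$ are your $\bS$ and $\bM$. It also concludes via the scalar Schur complement $V_{11}+\mathbf{r}^T\mathbf{C}^{-1}\mathbf{s}>0$ of the strictly diagonally dominant $M$-matrix block, which is your $\bM/\bM_{QQ}\geq m_{11}>0$. Your observation about $a_1$ is right: the paper's proof does not use $a_1>0$ either, but this is harmless, because row $1$ of $\widehat{\bB}$ is entrywise positive and sums to $a_1$, so feasibility already forces $a_1>0$.
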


\begin{proof} We use the same notation as in the proof of  \cref{thm: multiple odd cycles competitive} and arguing similarly we have that $\widehat{\mathbf{B}}\coloneqq \mathbf{B}\diag(\mathbf{x}^*)$ and $\mathbf{B}$ have determinants with the same sign and that the sum of the entries of the $i$-th row of $\widehat{\mathbf{B}}$ is $a_i$. We have $a_1>0$, while for $i \geq 2$, we have $a_i<0$. For every $i \in \{2, \dots, n\}$, let $i^+$ denote the unique prey of species $i$ in its cycle, and write $\widehat{b}_{i,i^+}=-\beta_i$ with $\beta_i>0$. All other off-diagonal entries in the $i$-th row are positive. This gives $\beta_i>\sum_{j\neq i^+}\widehat{b}_{ij}$ for every $i \geq 2$. Let $\mathbf{P}$ be the permutation matrix associated with the mapping $\sigma(1)=1$ and $\sigma(i)=i^+$ for $i \geq 2$. Multiplying $\widehat{\mathbf{B}}$ by $\mathbf{P}$ on the right moves the negative entries $-\beta_i$ onto the main diagonal of the lower $(n-1) \times (n-1)$ block. The determinant of $\mathbf{P}$ is $\det(\mathbf{P}) = (-1)^{n-1-k}$. Let $\mathbf{D} = \diag(1, -1, \dots, -1)$ be the diagonal matrix negating all rows except the first ($\det(\mathbf{D}) = (-1)^{n-1}$). Define $\mathbf{V} \coloneqq \mathbf{D} \widehat{\mathbf{B}} \mathbf{P}$. The matrix $\mathbf{V}$ is a block matrix:
\[
\mathbf{V} = \begin{pmatrix} V_{11} & \mathbf{r}^T \\ -\mathbf{s} & \mathbf{C} \end{pmatrix}
\]
where $V_{11} > 0$, $\mathbf{r} > 0$, $\mathbf{s} > 0$, and $\mathbf{C}$ is a strictly diagonally dominant $Z$-matrix. Thus, $\det(\mathbf{C})>0$ and $\mathbf{C}^{-1} \geq 0$. Using the Schur complement, we have $
\det(\mathbf{V}) = \det(\mathbf{C})(V_{11} + \mathbf{r}^T \mathbf{C}^{-1} \mathbf{s}) > 0$.
Finally, tracking the signs we have \[\sign(\det(\mathbf{B})) = \sign(\det(\widehat{\mathbf{B}}))= \sign(\det(\mathbf{D})) \sign(\det(\mathbf{V})) \sign(\det(\mathbf{P}))= (-1)^k\] and thus $\det(\mathbf{B}) < 0$, as $k$ is odd, contradicting the stability requirement $\det(\mathbf{B}) > 0$.
\end{proof} \vspace{0.1cm}

\paragraph{Cycle networks with a super-predator.} Let $n \geq 4$ and $k \geq 1$. We say that an ecological network consists of a single species growing on its own and predating on all other species, alongside a collection of $k$ disjoint obligate prey-predator cycles with inner competition if, after a relabeling of the species, species $1$ satisfies $a_1 > 0$ and $b_{11} > 0$, while for all other species $i \in \{2, \dots, n\}$ we have $a_i < 0$ and $b_{ii} > 0$, and the following conditions hold:
\begin{itemize}
    \item \textbf{Species 1 is a super-predator:} species $1$ benefits from consuming all other species, while harming them. Thus, $b_{1j} < 0$ and $b_{j1} > 0$ for every $j \in \{2, \dots, n\}$;
    \item \textbf{Prey-predator cycles:} the remaining species $\{2, \dots, n\}$ can be partitioned into $k$ disjoint directed cycles $C_1, \dots, C_k$, each of length at least $3$, such that for every $i \in \{2, \dots, n\}$, if $i^+$ denotes the unique prey of $i$ in its directed cycle, then $b_{i,i^+}<0$ and $b_{i^+,i}>0$;
    \item \textbf{Other pairs compete:} all other pairs of species compete, namely $b_{ij}>0$ and $b_{ji}>0$ whenever $i,j\in\{2,\dots,n\}$, $j\notin\{i,i^+\}$, and $i\neq j^+$.
\end{itemize}

\begin{theorem}\label{Thm:OnePredatingCycle}
For any $n \geq 4$, every ecology consisting of a single super-predator growing by itself, alongside an odd number $k$ of disjoint obligate prey-predator cycles with inner competition, is impossible.
\end{theorem}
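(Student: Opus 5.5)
The proof follows that of \cref{THM:SingSpecCompetingOddCycles}, with one change. There, the block structure of $\mathbf V$ forced a separate Schur-complement estimate. Here the super-predator's row inherits a diagonal dominance from $a_1>0$, so the whole matrix $\mathbf V$ can be shown directly to be a strictly diagonally dominant $Z$-matrix.

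\textbf{Setup.} Assume by contradiction that the ecology is possible, with feasible stable equilibrium $\xstar$. Set $\widehat{\bB}\coloneqq\bB\diag(\xstar)$. Then $\det\widehat{\bB}$ and $\det\bB$ have the same sign, and the $i$-th row sum of $\widehat{\bB}$ equals $a_i$.

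\textbf{Row signs.} For row $1$ we have $\widehat b_{11}>0$ and $\widehat b_{1j}<0$ for all $j\geq2$. Since $a_1>0$, this gives $\widehat b_{11}>\sum_{j\geq 2}|\widehat b_{1j}|$. For $i\geq2$ the row has:
\begin{itemize}
\item $\widehat b_{i1}>0$, since species $i$ is preyed upon by $1$;
\item $\widehat b_{i,i^+}=-\beta_i<0$;
\item all remaining entries positive.
\end{itemize}
Since $a_i<0$, this yields $\beta_i>\sum_{j\neq i^+}\widehat b_{ij}$, where the sum includes $j=1$.

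\textbf{The matrix $\mathbf V$.} Take $\bP$ to be the permutation matrix of $\sigma(1)=1$, $\sigma(i)=i^+$ for $i\geq2$, so $\det\bP=(-1)^{n-1-k}$. Take $\mathbf D=\diag(1,-1,\dots,-1)$, so $\det\mathbf D=(-1)^{n-1}$. Define $\mathbf V\coloneqq\mathbf D\widehat{\bB}\bP$; its entries are as follows.
\begin{itemize}
\item First row: unchanged up to column permutation. The diagonal entry is $\widehat b_{11}>0$ and all off-diagonal entries are negative.
\item Rows $i\geq2$: after negation, the diagonal entry is $\beta_i>0$. Every other entry, including the one in column $1$, which is $-\widehat b_{i1}<0$, is negative.
\end{itemize}
Hence $\mathbf V$ is a $Z$-matrix. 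By the two inequalities above, every row is strictly diagonally dominant. By Gershgorin, all eigenvalues of $\mathbf V$ have positive real part, so $\mathbf V$ is a nonsingular $M$-matrix and $\det\mathbf V>0$ by \cref{prop:characterizations-M-matrix}.

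\textbf{Sign count.} We obtain
\[
\sign\det\bB=\sign\det\mathbf D\cdot\sign\det\mathbf V\cdot\sign\det\bP=(-1)^{n-1}(-1)^{n-1-k}=(-1)^k .
\]
For odd $k$ this gives $\det\bB<0$. This contradicts the requirement $\det\bB>0$ coming from $\det\Jx=(-1)^n\det\diag(\xstar)\det\bB$ with $\Jx$ Hurwitz.

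\textbf{Main obstacle.} The only delicate point is the first row. Its off-diagonal entries now have the opposite sign to the competing case, so a naive Schur-complement bound $V_{11}+\mathbf r^T\mathbf C^{-1}\mathbf s$ has an indefinite sign. The fix is to use the row-sum identity from $a_1>0$ to obtain dominance for row $1$, which bypasses the Schur complement entirely.
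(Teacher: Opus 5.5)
Your proof is correct. It shares the paper's whole skeleton: the same $\widehat{\bB}=\bB\diag(\xstar)$, the same $\bP$ and $\mathbf D$, the same $\mathbf V$, and the same sign count $(-1)^{n-1}(-1)^{n-1-k}=(-1)^k$. The only difference is how you show $\det\mathbf V>0$.

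The paper partitions $\mathbf V$ as in \cref{THM:SingSpecCompetingOddCycles} and writes $\det\mathbf V=\det(\mathbf C)(V_{11}+\mathbf r^T\mathbf C^{-1}\mathbf s)$. It then uses the row-sum identities $-\mathbf s+\mathbf C\mathbf e=-\mathbf a_-$ and $V_{11}+\mathbf r^T\mathbf e=a_1$ to get $\mathbf C^{-1}\mathbf s\leq\mathbf e$, and hence $V_{11}+\mathbf r^T\mathbf C^{-1}\mathbf s\geq a_1>0$.

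You observe instead that, because $\mathbf r<0$, the full matrix $\mathbf V$ is already a $Z$-matrix. The same two row-sum identities are exactly strict diagonal dominance of row $1$ and of rows $i\geq 2$, the latter including the column-$1$ entry $-s_i$. So $\mathbf V$ is a nonsingular $M$-matrix, just as $\mathbf M$ was in \cref{thm: multiple odd cycles competitive}.

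The underlying inequalities are identical in the two proofs. Your version is shorter, avoids $\mathbf C^{-1}$ entirely, and gives slightly more, namely that all principal minors of $\mathbf V$ are positive. In effect it shows that the super-predator family is an instance of the pure-cycle mechanism: the fixed point $\sigma(1)=1$ and the unnegated first row absorb the sign of $a_1$.

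The paper's Schur-complement route buys uniformity with \cref{THM:SingSpecCompetingOddCycles}. There $\mathbf r>0$, so $\mathbf V$ is not a $Z$-matrix and your shortcut is unavailable, while the Schur term is trivially positive.
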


\begin{proof}
We use the same notation as in the proofs of  \cref{thm: multiple odd cycles competitive} and \cref{THM:SingSpecCompetingOddCycles} arguing similarly we have that $\widehat{\mathbf{B}}\coloneqq \mathbf{B}\diag(\mathbf{x}^*)$ and $\mathbf{B}$ have determinants with the same sign and that the sum of the entries of the $i$-th row of $\widehat{\mathbf{B}}$ is $a_i$. We have $a_1>0$, while for $i \geq 2$, we have $a_i<0$. As in the proof of \cref{THM:SingSpecCompetingOddCycles}, for every $i \in \{2, \dots, n\}$, let $i^+$ denote the unique prey of species $i$ in its cycle, and write $\widehat{b}_{i,i^+}=-\beta_i$ with $\beta_i>0$. Since species $1$ harms $i$, we have $\widehat{b}_{i1} > 0$. All other off-diagonal entries in row $i$ are positive which implies $
\beta_i > \sum_{j \neq i^+} \widehat{b}_{ij}$ for every  $i \geq 2$. Let $\mathbf{P}$ be the permutation matrix associated with the mapping $\sigma(1) = 1$ and $\sigma(i) = i^+$ for $i \geq 2$ and let $\mathbf{D} = \operatorname{diag}(1, -1, \dots, -1)$. Define $\mathbf{V} \coloneqq \mathbf{D} \widehat{\mathbf{B}} \mathbf{P}$. We partition $\mathbf{V}$ as:
\[
\mathbf{V} = \begin{pmatrix} V_{11} & \mathbf{r}^T \\ -\mathbf{s} & \mathbf{C} \end{pmatrix}
\]
where $V_{11} = \widehat{b}_{11} > 0$, and $\mathbf{C}$ is a strictly diagonally dominant $Z$-matrix (and thus a non-singular M-matrix, meaning $\det(\mathbf{C}) > 0$ and $\mathbf{C}^{-1} \geq 0$). Because species $1$ is a predator, we have:
\begin{itemize}
    \item $\mathbf{s} > 0$ (since $V_{i1} = -\widehat{b}_{i1} < 0$ for $i \geq 2$).
    \item $\mathbf{r} < 0$ (since $V_{1j} = \widehat{b}_{1, \sigma(j)} < 0$ for $j \geq 2$).
\end{itemize}
To evaluate the sign of the Schur complement term $\det(\mathbf{V}) = \det(\mathbf{C})(V_{11} + \mathbf{r}^T \mathbf{C}^{-1} \mathbf{s})$, we use the row sum equations. Let $\mathbf{e} = (1, \dots, 1)^T \in \mathbb{R}^{n-1}$. Because $\mathbf{P}$ merely permutes columns, the row sums of $\mathbf{V}$ correspond directly to the row sums of $\mathbf{D}\widehat{\mathbf{B}}$:
\begin{enumerate}
    \item For row 1, $V_{11} + \mathbf{r}^T \mathbf{e} = a_1 > 0$
    \item For rows $i \geq 2$:$
    -\mathbf{s} + \mathbf{C}\mathbf{e} = -\mathbf{a}_-$
 where $\mathbf{a}_- = (a_2, \dots, a_n)^T < 0$ is the $(n-1)$-dimensional subvector of $\mathbf{a}$.
\end{enumerate}
Since $\mathbf{C}$ is an M-matrix, we multiply by $\mathbf{C}^{-1} \geq 0$: we get $\mathbf{C}^{-1}\mathbf{s} = \mathbf{e} + \mathbf{C}^{-1}\mathbf{a}_-$. Because $\mathbf{a}_- < 0$ and $\mathbf{C}^{-1} \geq 0$, the vector $\mathbf{C}^{-1}\mathbf{a}_-$ is strictly negative. Therefore, we obtain the inequality $\mathbf{C}^{-1}\mathbf{s} \leq \mathbf{e}$.
Because our predator vector satisfies $\mathbf{r} < 0$, multiplying both sides of this inequality by $\mathbf{r}^T$ reverses the inequality direction: $\mathbf{r}^T \mathbf{C}^{-1}\mathbf{s} \geq \mathbf{r}^T \mathbf{e}$. Using this result, we can bound the Schur complement term:
\[
V_{11} + \mathbf{r}^T \mathbf{C}^{-1} \mathbf{s} \geq V_{11} + \mathbf{r}^T \mathbf{e} = a_1 > 0
\]

\noindent Since $\det(\mathbf{C}) > 0$ and $V_{11} + \mathbf{r}^T \mathbf{C}^{-1} \mathbf{s} > 0$, we have $\det(\mathbf{V}) > 0$. Tracking the signs, we obtain $\operatorname{sign}(\det(\mathbf{B}))=(-1)^k$. Since $k$ is odd, we get a contradiction with the stability requirement $\det(\mathbf{B}) > 0$.
\end{proof} \vspace{0.1cm}

\paragraph{Cycle networks with an obligate mutualistic pair.} Let $n \geq 4$ and $k \geq 1$. We say that an ecological network consists of a pair of species in obligate mutualism competing against a collection of $k$ disjoint obligate prey-predator cycles with inner competition if, after a relabeling of the species, species $1$ and $2$ satisfy $a_i < 0, b_{ii} > 0$, and $b_{12}, b_{21} < 0$, while for all other species $i \in \{3, \dots, n\}$ we have $a_i<0$ and $b_{ii}>0$, and the following conditions hold:
\begin{itemize}
    \item species $1$ and $2$ compete with all other species, namely $b_{ij}>0$ and $b_{ji}>0$ for every $i \in \{1, 2\}$ and $j \in \{3, \dots, n\}$;
    \item the remaining species $\{3, \dots, n\}$ can be partitioned into $k$ disjoint directed cycles $C_1, \dots, C_k$, each of length at least $3$, such that for every $i \in \{3, \dots, n\}$, if $i^+$ denotes the unique prey of $i$ in its directed cycle, then $b_{i,i^+}<0$ and $b_{i^+,i}>0$;
    \item all other pairs of species compete, namely $b_{ij}>0$ and $b_{ji}>0$ whenever $i,j\in\{3,\dots,n\}$, $j\notin\{i,i^+\}$, and $i\neq j^+$.
\end{itemize}

\begin{theorem}
For any $n\geq 4$, every ecology consisting of a pair of species in obligate mutualism competing against an even number $k$ of disjoint obligate prey-predator cycles with inner competition is impossible.
\end{theorem}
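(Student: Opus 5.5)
The plan is to follow the determinant-sign strategy of \cref{thm: multiple odd cycles competitive} and \cref{THM:SingSpecCompetingOddCycles}, but with a $2\times 2$ block for the mutualistic pair. Assume by contradiction that a feasible, Hurwitz-stable equilibrium ${\xstar}$ exists. Stability gives $\det(\bB)>0$, because $\det\Jx=(-1)^n\det(\diag({\xstar}))\det(\bB)$. As before, set $\widehat{\bB}\coloneqq\bB\diag({\xstar})$. This matrix has the same determinant sign as $\bB$, and its $i$-th row sums to $a_i<0$. Let $\bP$ be the permutation matrix of $\sigma(1)=1$, $\sigma(2)=2$, $\sigma(i)=i^+$ for $i\geq 3$, so that $\det(\bP)=(-1)^{n-2-k}$. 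Let $\mathbf D=\diag(1,1,-1,\dots,-1)$, so that $\det(\mathbf D)=(-1)^{n-2}$. Then define $\mathbf V\coloneqq\mathbf D\widehat{\bB}\bP$ in block form
\[
\mathbf V=\begin{pmatrix}\bA & \mathbf R\\ -\mathbf S & \bC\end{pmatrix},
\]
where $\bA=\widehat{\bB}_{\{1,2\},\{1,2\}}$, $\mathbf R\geq 0$ collects the (permuted) competition entries of rows $1,2$, and $\mathbf S\geq 0$ collects $\widehat b_{i1},\widehat b_{i2}$ for $i\geq3$. As in the earlier proofs, $\bC$ is a strictly diagonally dominant $Z$-matrix, because the negative row sums of rows $i\geq 3$ give $\beta_i>\sum_{j\neq i^+}\widehat b_{ij}$. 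The target is $\sign\det(\bB)=(-1)^{n-2}(-1)^{n-2-k}\sign\det(\mathbf V)=(-1)^k\sign\det(\mathbf V)$. So it suffices to show $\det(\mathbf V)<0$, which for even $k$ contradicts $\det(\bB)>0$.

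\emph{Step 1: the pair block.} Write $\bA=\begin{pmatrix}p&-q\\-r&s\end{pmatrix}$ with $p,q,r,s>0$. The row sums of rows $1,2$ of $\widehat{\bB}$ are negative and the competition entries are positive, so $p<q$ and $s<r$. Hence $\det\bA=ps-qr<0$ and $\bA^{-1}=\frac{1}{\det\bA}\begin{pmatrix}s&q\\ r&p\end{pmatrix}\leq 0$ entrywise. Moreover, $\bA^{-1}\ba_{\{1,2\}}\geq 0$ because $\ba_{\{1,2\}}<0$.

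\emph{Step 2: Schur complement on the pair block.} By \cref{lem: schur},
\[
\det\mathbf V=\det(\bA)\det(\bC')\quad\text{with}\quad \bC'\coloneqq\bC+\mathbf S\bA^{-1}\mathbf R .
\]
Since $\mathbf S,\mathbf R\geq0$ and $\bA^{-1}\leq 0$, the term $\mathbf S\bA^{-1}\mathbf R$ is entrywise non-positive, so $\bC'$ is still a $Z$-matrix. The remaining task is to show $\det\bC'>0$. This is the step I expect to be the crux: the natural first attempt, taking the Schur complement on $\bC$ as in the earlier theorems, leaves a $2\times2$ matrix with no controlled sign pattern. Complementing on $\bA$ avoids that.

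\emph{Step 3: positivity of $\det\bC'$ via row sums.} Let $\mathbf e$ denote all-ones vectors. From the first two rows of $\mathbf V$ we get $\bA\mathbf e+\mathbf R\mathbf e=\ba_{\{1,2\}}$, so $\bA^{-1}\mathbf R\mathbf e=\bA^{-1}\ba_{\{1,2\}}-\mathbf e\geq-\mathbf e$. Since $\mathbf S\geq 0$, this gives $\mathbf S\bA^{-1}\mathbf R\mathbf e\geq-\mathbf S\mathbf e$. From the remaining rows we get $\bC\mathbf e-\mathbf S\mathbf e=-\ba_{\{3,\dots,n\}}>0$. Therefore
\[
\bC'\mathbf e\geq\bC\mathbf e-\mathbf S\mathbf e=-\ba_{\{3,\dots,n\}}>0 .
\]
For a $Z$-matrix, $\bC'\mathbf e>0$ means exactly that $\bC'$ is strictly row diagonally dominant with positive diagonal. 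Hence, as in \cref{thm: multiple odd cycles competitive}, $\bC'$ is a non-singular $M$-matrix and $\det\bC'>0$ by \cref{prop:characterizations-M-matrix}.

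Combining the steps, $\det\mathbf V=\det(\bA)\det(\bC')<0$. Therefore $\sign\det(\bB)=(-1)^{k+1}=-1$ for even $k$, contradicting stability.
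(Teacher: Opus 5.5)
Your proof is correct, but you take the Schur complement on the opposite block from the paper. The paper complements on the cycle block $\mathbf V_{22}$ (your $\bC$), as in \cref{THM:SingSpecCompetingOddCycles,Thm:OnePredatingCycle}. It then analyses the $2\times 2$ complement $\mathbf V_{11}-\mathbf V_{12}\mathbf V_{22}^{-1}\mathbf V_{21}$. Its diagonal is positive because $-\mathbf V_{12}\mathbf V_{22}^{-1}\mathbf V_{21}\geq 0$. The row-sum identities give row sums $\ba_{\{1,2\}}+\mathbf V_{12}\mathbf V_{22}^{-1}\ba_{\{3,\dots,n\}}<0$. This forces both off-diagonal entries to be negative and larger in absolute value than the diagonal entries, so the determinant is negative. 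Your side remark that complementing on $\bC$ leaves a $2\times2$ matrix with no controlled sign pattern is therefore not accurate: the row-sum trick you use in Step 3 controls it.

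Your route locates the sign flip directly in $\det\bA<0$. Since $\bA\mathbf e=\ba_{\{1,2\}}-\mathbf R\mathbf e<0$, this is the two-species obligate-mutualism obstruction of \cref{ex:two-species-obligate-mutualism} in rescaled form. You then show that the complement $\bC+\mathbf S\bA^{-1}\mathbf R$ is still a $Z$-matrix with positive row sums, hence an $M$-matrix. This makes it transparent why the parity condition is reversed relative to the odd-cycle theorems: the mutualistic pair contributes the extra factor $-1$. The paper's version instead keeps the $M$-matrix block identical to the earlier proofs and puts the work into an elementary $2\times2$ computation. Both arguments are equally short and rest on the same row-sum identities.
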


\begin{proof}
We use the same notation as in the proofs of  \cref{thm: multiple odd cycles competitive} and \cref{THM:SingSpecCompetingOddCycles} arguing similarly we have that $\widehat{\mathbf{B}}\coloneqq \mathbf{B}\diag(\mathbf{x}^*)$ and $\mathbf{B}$ have determinants with the same sign and that the sum of the entries of the $i$-th row of $\widehat{\mathbf{B}}$ is $a_i$. Because $a_i < 0$ for all species, the sum of the entries in every $i$-th row of $\widehat{\mathbf{B}}$ is strictly negative. Let $\mathbf{P}$ be the permutation matrix associated with the mapping $\sigma(1)=1$, $\sigma(2)=2$, and $\sigma(i)=i^+$ for $i \geq 3$ and observe $\det(\mathbf{P}) = (-1)^{n-2-k}$. Multiplying $\widehat{\mathbf{B}}$ by $\mathbf{P}$ on the right moves the negative cycle entries onto the main diagonal of the lower $(n-2) \times (n-2)$ block. Let $\mathbf{D} = \operatorname{diag}(1, 1, -1, \dots, -1)$ be the diagonal matrix negating all rows except the first two ($\det(\mathbf{D}) = (-1)^{n-2}$). Define $\mathbf{V} \coloneqq \mathbf{D} \widehat{\mathbf{B}} \mathbf{P}$. The matrix $\mathbf{V}$ is partitioned as:
\[
\mathbf{V = \begin{pmatrix} \mathbf{V}_{11} & \mathbf{V}_{12} \\ \mathbf{V}_{21} & \mathbf{V}_{22} \end{pmatrix}}
\]
where $\mathbf{V}_{11}$ is a $2 \times 2$ $Z$-matrix, $\mathbf{V}_{12} > 0$, $\mathbf{V}_{21} < 0$, and $\mathbf{V}_{22}$ is an $(n-2) \times (n-2)$ strictly diagonally dominant $Z$-matrix. Consequently, $\mathbf{V}_{22}$ is a non-singular $M$-matrix, meaning $\det(\mathbf{V}_{22}) > 0$ and $\mathbf{V}_{22}^{-1} \geq 0$. Applying the Schur complement to evaluate the determinant of $\mathbf{V}$, we obtain $\det(\mathbf{V}) = \det(\mathbf{V}_{22})\det(\mathbf{S})$, where $\mathbf{S} = \mathbf{V}_{11} - \mathbf{V}_{12} \mathbf{V}_{22}^{-1} \mathbf{V}_{21}$.

We now determine the sign of $\det(\mathbf{S})$. Evaluating the row sums of $\mathbf{V}$, we have $\mathbf{V}_{11} \mathbf{1}_2 + \mathbf{V}_{12} \mathbf{1}_{n-2} = \mathbf{a}_{S_1}$ and $\mathbf{V}_{21} \mathbf{1}_2 + \mathbf{V}_{22} \mathbf{1}_{n-2} = -\mathbf{a}_{S_2}$. Solving for $\mathbf{1}_{n-2}$ and substituting yields:
\[
\mathbf{S \mathbf{1}_2 = \mathbf{a}_{S_1} + \mathbf{V}_{12} \mathbf{V}_{22}^{-1} \mathbf{a}_{S_2}}.
\]
Since $\mathbf{a}_{S_1} < 0, \mathbf{a}_{S_2} < 0, \mathbf{V}_{12} > 0$, and $\mathbf{V}_{22}^{-1} \geq 0$, we conclude that $\mathbf{S \mathbf{1}_2 < 0}$. Let $s_{ij}$ denote the entries of $\mathbf{S}$. The diagonal entries of $\mathbf{V}_{11}$ are strictly positive ($\widehat{b}_{11}, \widehat{b}_{22} > 0$), and the matrix $-\mathbf{V}_{12}\mathbf{V}_{22}^{-1}\mathbf{V}_{21}$ is non-negative. Therefore, $s_{11} > 0$ and $s_{22} > 0$. Because $\mathbf{S \mathbf{1}_2 < 0}$, we must have $s_{11} + s_{12} < 0$ and $s_{21} + s_{22} < 0$. This implies $s_{12} < -s_{11} < 0$ and $s_{21} < -s_{22} < 0$. Evaluating the determinant, we find $\det(\mathbf{S}) = s_{11}s_{22} - s_{12}s_{21} < 0$.
Because $|s_{12}| > s_{11}$ and $|s_{21}| > s_{22}$, the product of the negative off-diagonals strictly dominates the product of the positive diagonals. Since $\det(\mathbf{V}_{22}) > 0$ and $\det(\mathbf{S}) < 0$, we have $\det(\mathbf{V}) < 0$. Finally, we track the signs back to the original matrix $\operatorname{sign}(\det(\mathbf{B}))=(-1)^{k+1}$ and since $k$ is even, we obtain $\det(\mathbf{B}) < 0$ contradicting stability and proving that the ecology is impossible.
\end{proof} 

\subsection{Impossibility of Four-species Ecologies}\label{sec: 3.3}

We now complete the four-species classification. The general results of the previous subsections establish the impossibility of eight of the eleven candidate ecologies in \cref{fig:symmetricimpossible,fig: asymmetricimpossible}. It remains to treat the three asymmetric ecologies (a), (b), and (c) in \cref{fig: asymmetricimpossible}.

\begin{lemma}\label{lem: eco1asymm}
    Ecology $(a)$ in \cref{fig: asymmetricimpossible} is impossible.
\end{lemma}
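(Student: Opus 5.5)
Label the species of ecology (a) in \cref{fig: asymmetricimpossible} as $1=$ top, $2=$ left, $3=$ right, $4=$ bottom. Reading off the figure, species $1,2$ grow on their own ($a_1,a_2>0$) and species $3,4$ are dying ($a_3,a_4<0$). Species $2$ predates $1$, $3$ and $4$, so $b_{2j}<0$ and $b_{j2}>0$ for $j=1,3,4$. Species $1$ competes with $3$ and $4$, and $3,4$ are mutualists, so $b_{34},b_{43}<0$.

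The strategy is the determinant-sign argument used for the cycle families. Stability forces $\det\bB>0$, since $\det\Jx=\det(\diag(\xstar))\det\bB$ for $n=4$. I will show instead that $\det\bB<0$. As in the proof of \cref{thm: multiple odd cycles competitive}, set $\widehat{\bB}=\bB\diag(\xstar)$. It has the same sign pattern as $\bB$, $\det\widehat{\bB}$ has the sign of $\det\bB$, and its row sums are $\widehat{\bB}\mathbf 1=\ba$. Split the indices as $I=\{1,2\}$ and $J=\{3,4\}$, and use \cref{lem: schur} with $Q=J$.

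\textbf{Step 1: the block $\widehat{\bB}_{JJ}$.} In rows $3$ and $4$, every entry outside the mutualistic pair is positive: the $1$-entry by competition, the $2$-entry because $2$ harms its prey, and the diagonal. Since the row sums are $a_3,a_4<0$, we get $\widehat{\bB}_{JJ}\mathbf 1_2<0$. This $2\times2$ block has positive diagonal and negative off-diagonal. Exactly as in the mutualistic-pair theorem, $|\widehat b_{34}|>\widehat b_{33}$ and $|\widehat b_{43}|>\widehat b_{44}$, so $\det\widehat{\bB}_{JJ}<0$. Moreover
$$\widehat{\bB}_{JJ}^{-1}=\det(\widehat{\bB}_{JJ})^{-1}\operatorname{adj}(\widehat{\bB}_{JJ}),$$
and the adjugate has all entries positive. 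Hence $\bM\coloneqq\widehat{\bB}_{JJ}^{-1}<0$ entrywise.

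\textbf{Step 2: the Schur complement $\bS=\widehat{\bB}_{II}-\widehat{\bB}_{IJ}\bM\widehat{\bB}_{JI}$.} Sign bookkeeping gives the following. The matrix $\widehat{\bB}_{JI}>0$ entrywise. Row $1$ of $\widehat{\bB}_{IJ}$ is positive (competition), and row $2$ is negative (predation). Hence $\widehat{\bB}_{IJ}\bM\widehat{\bB}_{JI}$ has a negative first row and a positive second row. This yields $s_{11}>0$, $s_{12}>\widehat b_{12}>0$ (species $1$ is prey of $2$), and $s_{21}<\widehat b_{21}<0$. The sign of $s_{22}$ is not determined by signs alone; this is the main obstacle.

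To resolve it I use the row-sum trick from the proof of \cref{Thm:OnePredatingCycle}. Eliminating $\mathbf 1_2$ between the $I$-rows and the $J$-rows of $\widehat{\bB}\mathbf 1=\ba$ gives
$$\bS\mathbf 1_2=\ba_I-\widehat{\bB}_{IJ}\bM\ba_J.$$
Since $\bM<0$ and $\ba_J<0$, we have $\bM\ba_J>0$. Row $2$ of $\widehat{\bB}_{IJ}$ is negative, so the second component of $\widehat{\bB}_{IJ}\bM\ba_J$ is negative. Therefore $s_{21}+s_{22}>a_2>0$, and so $s_{22}>-s_{21}>0$.

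\textbf{Step 3: conclusion.} We get $\det\bS=s_{11}s_{22}-s_{12}s_{21}>0$, because both terms are positive. By \cref{lem: schur},
$$\det\widehat{\bB}=\det(\widehat{\bB}_{JJ})\det\bS<0,$$
so $\det\bB<0$. This contradicts stability, and ecology (a) is impossible.
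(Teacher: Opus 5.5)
Your proof is correct, but it uses a different decomposition from the paper's. The paper pivots on the single entry $b_{22}>0$, i.e.\ it eliminates the predator, and works with the $3\times 3$ reduced system $\bC\bx_Q=\bA_Q$ on $\{1,3,4\}$. There the entries $(\bC)_{23}$ and $(\bC)_{32}$ have a priori unknown sign and must be fixed by feasibility. The same reduced equations then give the key inequality $c_{33}c_{44}-c_{34}c_{43}<0$, and a cofactor expansion along the first row yields $\det\bC<0$.

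You instead pass to $\widehat{\bB}=\bB\diag(\xstar)$ and pivot on the dying mutualistic block $\{3,4\}$. Its negative determinant and entrywise negative inverse follow at once from the row sums. This leaves only one ambiguous entry, $s_{22}$, in the $2\times 2$ Schur complement. Your identity $\bS\mathbf 1_2=\ba_I-\widehat{\bB}_{IJ}\bM\ba_J$ settles it; that identity is just the paper's reduced equilibrium system after rescaling columns by $\xstar$.

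Your route needs only $2\times 2$ determinants. It is the same template the paper uses for ecology (b), where it eliminates $P=\{3,4\}$, and for the obligate-mutualistic-pair family, so it treats (a) uniformly with those cases. The paper's route needs only a scalar pivot, but pays with a $3\times 3$ expansion and two separate feasibility-based sign deductions. Both arguments rest on the same mechanism: a dying mutualistic pair harmed by every other species forces a negative $2\times 2$ determinant. In the paper this appears as $c_{33}c_{44}-c_{34}c_{43}<0$.

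One minor point: in Step 3 you only need $s_{22}>0$, so the bound $s_{22}>-s_{21}$ is more than required.
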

\begin{proof}
Assume by contradiction that the ecology is possible, and write ${\xstar}=(x_1,x_2,x_3.x_4)^T$ for the feasible and stable equilibrium, so that $\bB{\xstar}=\ba$. We have the following sign pattern:
\[
\bB=
\begin{pmatrix}
 b_{11} & b_{12} & b_{13} & b_{14}\\
-b_{21} &  b_{22} & -b_{23} & -b_{24}\\
b_{31} & b_{32} & b_{33} & -b_{34}\\
b_{41} & b_{42} & -b_{43} & b_{44}
\end{pmatrix},
\quad
\ba=
\begin{pmatrix}
a_1\\ a_2\\ -a_3\\ -a_4
\end{pmatrix}.
\]
We eliminate the second species, that is we take $P=\{2\}$ and $Q=\{1,3,4\}$. We immediately have $\det(\bB_{PP})=b_{22}>0$. Consider the reduced system $\bC\bx_Q=\bA_Q$, where
\[
\bC=
\begin{pmatrix}
c_{11} & c_{13} & c_{14}\\
c_{31} & c_{33} & -c_{34}\\
c_{41} & -c_{43} & c_{44}
\end{pmatrix}, \quad
\bA_Q=\begin{pmatrix}
?\\-A_3\\-A_4
\end{pmatrix}.
\]
The choice of signs above is not arbitrary. We do not know (and do not need) the sign of $(\bA_Q)_1$. However, $(\bA_Q)_2$ and $(\bA_Q)_3$ are negative since, for instance, $(\bA_Q)_2=-a_3-a_2b_{32}b_{22}^{-1}$. Direct computations also show that all $(\bC)_{ij}$ have known signs, apart from $(\bC)_{23}$ and $(\bC)_{32}$, whose signs are not known a priori. However, assuming feasibility and writing explicitly the second and third equation of the reduced system, we get $(\bC)_{23}x_4=-A_3-c_{31}x_1-c_{33}x_3<0$ and $(\bC)_{32}x_3=-A_4-c_{41}x_1-c_{44}x_4<0$, justifying the chosen sign representation. Having fixed scalars and signs, from the last two equations of the reduced system we now read $c_{33}x_3-c_{34}x_4<0$ and $-c_{43}x_3+c_{44}x_4<0$. Combining these two, gives the key inequality $c_{33}c_{44}-c_{34}c_{43}<0$.

We now expand the determinant of $\bC$ along the first row, and obtain
\[
\det(\bC)
=
c_{11}(c_{33}c_{44}-c_{34}c_{43})
-c_{13}(c_{31}c_{44}+c_{34}c_{41})
-c_{14}(c_{31}c_{43}+c_{33}c_{41}).
\]
Our key inequality forces the first summand to be negative, and since the other summands have non-ambiguous signs, we conclude that $\det(\bC)<0$. By \cref{lem: schur}, $\det(\bB)=b_{22}\det(\bC)<0$, contradicting the stability assumption, proving that the ecology is impossible.
\end{proof} \vspace{0.1cm}

\noindent We now turn our attention to ecology $(b)$ in \cref{fig: asymmetricimpossible}.
\begin{lemma}\label{lem: eco2asymm}
    Ecology $(b)$ in \cref{fig: asymmetricimpossible} is impossible.
\end{lemma}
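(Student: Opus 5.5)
The plan is to mimic the Schur-complement argument of \cref{lem: eco1asymm}, but eliminate the two dying species instead of a single one. Label the species as in \cref{fig: asymmetricimpossible}(b): $1,2$ are the two growing species (filled nodes) and $3,4$ the two dying ones. Species $1,2$ are mutualists, $3,4$ are mutualists, and each of $1,2$ predates each of $3,4$. With all displayed letters positive, the sign pattern is
\[
\bB=\begin{pmatrix} b_{11} & -b_{12} & -b_{13} & -b_{14}\\ -b_{21} & b_{22} & -b_{23} & -b_{24}\\ b_{31} & b_{32} & b_{33} & -b_{34}\\ b_{41} & b_{42} & -b_{43} & b_{44}\end{pmatrix},\qquad \ba=\begin{pmatrix} a_1\\ a_2\\ -a_3\\ -a_4\end{pmatrix}.
\]
Assume for contradiction that ${\xstar}=(x_1,x_2,x_3,x_4)^T>0$ is feasible and that $\Jx$ is Hurwitz. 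As in the previous proofs, $\det\Jx=\det(\diag({\xstar}))\det(\bB)>0$ (the sign $(-1)^4=1$), so $\det\bB>0$. The goal is to show $\det\bB<0$.

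\emph{Step 1 (the dying block).} Take $Q=\{3,4\}$. The last two equilibrium equations read
\[
b_{33}x_3-b_{34}x_4=-a_3-b_{31}x_1-b_{32}x_2<0,\qquad -b_{43}x_3+b_{44}x_4<0.
\]
Hence $b_{33}x_3<b_{34}x_4$ and $b_{44}x_4<b_{43}x_3$. Multiplying these two inequalities of positive quantities gives $\det\bB_{QQ}=b_{33}b_{44}-b_{34}b_{43}<0$. In particular $\bB_{QQ}$ is invertible, and
\[
\bB_{QQ}^{-1}=\frac{1}{\det\bB_{QQ}}\begin{pmatrix} b_{44} & b_{34}\\ b_{43} & b_{33}\end{pmatrix}
\]
is entrywise nonpositive.

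\emph{Step 2 (the Schur complement on the growing block).} Let $\bC=(\bB/\bB_{QQ})=\bB_{Q^cQ^c}-\bB_{Q^cQ}\bB_{QQ}^{-1}\bB_{QQ^c}$. Here $\bB_{Q^cQ}<0$, $\bB_{QQ}^{-1}\le 0$ and $\bB_{QQ^c}>0$, so the subtracted product is entrywise nonnegative. Therefore the off-diagonal entries of $\bC$ satisfy $c_{12},c_{21}<0$.

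Eliminating $x_3,x_4$ from $\bB{\xstar}=\ba$ gives the reduced system
\[
\bC\,{\xstar}_{Q^c}=\ba_{Q^c}-\bB_{Q^cQ}\bB_{QQ}^{-1}\ba_Q .
\]
Since $\ba_Q<0$ and $\bB_{QQ}^{-1}\le0$, we have $\bB_{QQ}^{-1}\ba_Q\ge0$, hence $\bB_{Q^cQ}\bB_{QQ}^{-1}\ba_Q\le 0$. The right-hand side is therefore at least $\ba_{Q^c}>0$, so it is strictly positive. Reading the two rows with $c_{12},c_{21}<0$ gives
\[
c_{11}x_1>|c_{12}|x_2>0,\qquad c_{22}x_2>|c_{21}|x_1>0 .
\]
Multiplying yields $c_{11}c_{22}>c_{12}c_{21}$, i.e.\ $\det\bC>0$.

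\emph{Step 3 (conclusion).} By \cref{lem: schur}, $\det\bB=\det(\bB_{QQ})\det(\bC)<0$, which contradicts $\det\bB>0$. Hence the ecology is impossible.

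The main obstacle I expect is the sign bookkeeping in Step 2. The whole argument depends on $\bB_{QQ}^{-1}$ being entrywise nonpositive, and that in turn relies on the determinant inequality from Step 1. This is what simultaneously makes $\bC$ a matrix with negative off-diagonal entries and makes the reduced right-hand side positive. I would double-check the predation orientation convention of \cref{fig: Pierregraph} ($i\to j$ means $b_{ij}<0<b_{ji}$) before fixing the signs above.
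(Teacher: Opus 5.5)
Your proposal is correct and follows the paper's proof essentially step for step. The paper likewise eliminates the two dying species by a Schur complement, derives $\det\bB_{\{3,4\},\{3,4\}}<0$ from the last two equilibrium equations, and uses the resulting entrywise-negative inverse to get negative off-diagonal entries in $\bC$ and a positive reduced right-hand side; this gives $\det\bC>0$, hence $\det\bB<0$, contradicting stability. Your sign pattern and your reading of the predation arrows also match the paper's.
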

\begin{proof}
Assume by contradiction that the ecology is possible, and write ${\xstar}=(x_1,x_2,x_3,x_4)^T$ for the feasible and stable equilibrium, so that $\bB{\xstar}=\ba$. We have the following sign pattern:
\begin{align*}
\bB=
\begin{pmatrix}
 b_{11} & -b_{12} & -b_{13} & -b_{14}\\
 -b_{21} & b_{22} & -b_{23} & -b_{24}\\
 b_{31} & b_{32} & b_{33} & -b_{34}\\
 b_{41} & b_{42} & -b_{43} & b_{44}
\end{pmatrix},
\quad
\ba=
\begin{pmatrix}
a_1\\ a_2\\ -a_3\\ -a_4
\end{pmatrix}.
\end{align*}
We will eliminate the last two species, hence we take $P=\{3,4\}$ and $Q=\{1,2\}$. 

We first prove that $\det(\bB_{PP})<0$. The last two equilibrium equations give $\bB_{PQ}\bx_Q+\bB_{PP}\bx_P=\ba_P$. Since $\bB_{PQ}$ and $\bx_Q$ have positive entries, we have $\bB_{PQ}\bx_Q>0$. Since $\ba_P<0$, this gives
$\bB_{PP}\bx_P=\ba_P-\bB_{PQ}\bx_Q<0$. Written componentwise, this gives $b_{33}x_3-b_{34}x_4<0$ and $-b_{43}x_3+b_{44}x_4<0$, which implies $\det(\bB_{PP})<0$. Consider the reduced system $\bC\bx_Q=\bA_Q$. Since
\[
    \bB_{PP}^{-1}
    =
    \frac{1}{\det(\bB_{PP})}
    \begin{pmatrix}
        b_{44} & b_{34}\\
        b_{43} & b_{33}
    \end{pmatrix}
\]
and $\det(\bB_{PP})<0$, every entry of $\bB_{PP}^{-1}$ is negative. Since $\ba_P<0$, we get $\bB_{PP}^{-1}\ba_P>0$. Since $\bB_{QP}$ has negative entries, this gives $\bB_{QP}\bB_{PP}^{-1}\ba_P<0$, and therefore $\bA_Q>0$.

For the matrix $\bC$, notice that $\bB_{QP}\bB_{PP}^{-1}\bB_{PQ}$ has positive entries, because $\bB_{QP}<0$, $\bB_{PP}^{-1}<0$, and $\bB_{PQ}>0$. Hence the off-diagonal entries of $\bC=\bB_{QQ}-\bB_{QP}\bB_{PP}^{-1}\bB_{PQ}$ are negative. We may write
\[
    \bC=
    \begin{pmatrix}
        c_{11} & -c_{12}\\
        -c_{21} & c_{22}
    \end{pmatrix},
    \quad
    \bA_Q=
    \begin{pmatrix}
        A_1\\ A_2
    \end{pmatrix}.
\]
Notice that the signs of the diagonal coefficients is not known a priori. However, from the equations of the reduced system we may read $(\bC)_{11}x_1=c_{12}x_2+A_1>0$ and $(\bC)_{22}x_2=c_{21}x_1+A_2>0$, justifying the chosen sign representation. Having fixed scalars and signs, we now read $c_{11}x_1>c_{12}x_2$, and $c_{22}x_2>c_{21}x_1$, which imply $\det(\bC)=c_{11}c_{22}-c_{12}c_{21}>0$. By \cref{lem: schur}, $\det(\bB)=\det(\bB_{PP})\det(\bC)$ and since $\det\bB_{PP}<0$, it follows that $\det(\bB)<0$. This contradicts the stability assumption, proving that the ecology is impossible.
\end{proof} \vspace{0.1cm}

\noindent Lastly, we prove that also ecology $(c)$ in \cref{fig: asymmetricimpossible} is impossible.

\begin{lemma}\label{lem: eco3asymm}
    Ecology $(c)$ in \cref{fig: asymmetricimpossible} is impossible.
\end{lemma}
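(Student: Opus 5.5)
The plan is to mirror the Schur-complement argument of \cref{lem: eco2asymm}, this time eliminating the obligate mutualistic pair. Reading off ecology $(c)$ in \cref{fig: asymmetricimpossible}, species $1$ grows on its own and species $2,3,4$ die on their own. Species $2$ and $4$ are mutualists. Species $3$ predates species $1$, so $b_{31}<0$ and $b_{13}>0$. All remaining pairs compete. Hence the only negative off-diagonal entries of $\bB$ are $b_{24}$, $b_{42}$ and $b_{31}$, and we have $a_1>0$ and $a_2,a_3,a_4<0$.

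Assume by contradiction that a feasible stable equilibrium ${\xstar}=(x_1,x_2,x_3,x_4)^T$ exists. Stability forces $\det(\bB)>0$, since $\det\Jx=(-1)^n\det(\diag({\xstar}))\det(\bB)$ and a Hurwitz $4\times 4$ matrix has positive determinant. Take $P=\{2,4\}$ and $Q=\{1,3\}$.

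\textbf{Step 1: sign of $\det(\bB_{PP})$.} The equilibrium rows indexed by $P$ read $\bB_{PP}\bx_P=\ba_P-\bB_{PQ}\bx_Q$. Here $\bB_{PQ}>0$ (the mutualists compete with $1$ and $3$) and $\ba_P<0$, so $\bB_{PP}\bx_P<0$. Componentwise this gives $b_{22}x_2<|b_{24}|x_4$ and $b_{44}x_4<|b_{42}|x_2$. Multiplying these yields $\det(\bB_{PP})<0$. Consequently every entry of $\bB_{PP}^{-1}$ is negative, exactly as in \cref{lem: eco2asymm}.

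\textbf{Step 2: the reduced system $\bC\bx_Q=\bA_Q$.} Here $\bC=\bB_{QQ}-\bB_{QP}\bB_{PP}^{-1}\bB_{PQ}$ and $\bA_Q=\ba_Q-\bB_{QP}\bB_{PP}^{-1}\ba_P$.
\begin{itemize}
\item Since $\bB_{QP}>0$, $\bB_{PQ}>0$ and $\bB_{PP}^{-1}<0$, the correction $-\bB_{QP}\bB_{PP}^{-1}\bB_{PQ}$ is entrywise positive. Hence $c_{11},c_{13},c_{33}>0$ inherit their signs from $b_{11},b_{13},b_{33}$, while the sign of $c_{31}$ is a priori unknown.
\item Since $\bB_{PP}^{-1}\ba_P>0$, the vector $\bB_{QP}\bB_{PP}^{-1}\ba_P$ is positive, so $(\bA_Q)_3=a_3-(\text{positive})<0$.
\end{itemize}

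\textbf{Step 3: fixing the unknown sign.} This is the only delicate point. The second reduced equation gives $c_{31}x_1=(\bA_Q)_3-c_{33}x_3<0$, so $c_{31}<0$. Then
\[
\det(\bC)=c_{11}c_{33}-c_{13}c_{31}>0,
\]
since both terms are positive.

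\textbf{Conclusion.} By \cref{lem: schur}, $\det(\bB)=\det(\bB_{PP})\det(\bC)<0$. This contradicts the stability requirement $\det(\bB)>0$, so ecology $(c)$ is impossible.
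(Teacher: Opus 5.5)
Your proof is correct and follows the paper's argument essentially step for step. You eliminate the obligate mutualistic pair by a Schur complement, deduce $\det(\bB_{PP})<0$ and hence $\bB_{PP}^{-1}<0$, use the second reduced equilibrium equation to force the one ambiguous entry of $\bC$ to be negative, and conclude $\det(\bC)>0$ and therefore $\det(\bB)<0$. The only difference is cosmetic: you read the figure directly, with mutualists $\{2,4\}$ and predator $3$, whereas the paper swaps species $3$ and $4$ and takes $P=\{2,3\}$, $Q=\{1,4\}$.
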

\begin{proof}
Assume by contradiction that the ecology is possible, and write ${\xstar}(x_1,x_2,x_3,x_4)^T$ for the feasible and stable equilibrium, so that $\bB{\xstar}=\ba$. We have the following sign pattern:
\begin{align*}
\bB=
\begin{pmatrix}
 b_{11} & b_{12} & b_{13} & b_{14}\\
 b_{21} & b_{22} & -b_{23} & b_{24}\\
 b_{31} & -b_{32} & b_{33} & b_{34}\\
 -b_{41} & b_{42} & b_{43} & b_{44}
\end{pmatrix},
\quad
\ba=
\begin{pmatrix}
a_1\\ -a_2\\ -a_3\\ -a_4
\end{pmatrix}.
\end{align*}
We eliminate the second and third species, that is we take $P=\{2,3\}$ and $Q=\{1,4\}$. Arguing as in the proof of \cref{lem: eco2asymm}, we find $\det(\bB_{PP})<0$, which implies ${\bB_{PP}^{-1}}<0$.

Consider the reduced system $\bC\bx_Q=\bA_Q$. Since $\bB_{QP}$ and $\bB_{PQ}$ have positive entries and $\bB_{PP}^{-1}<0$, the matrix $\bB_{QP}\bB_{PP}^{-1}\bB_{PQ}$ has negative entries. Hence all the entries of $\bC=\bB_{QQ}-\bB_{QP}\bB_{PP}^{-1}\bB_{PQ}$ are positive, apart from $(\bC)_{21}$, whose sign is not known a priori. Moreover, since $\ba_P<0$, we have $\bB_{PP}^{-1}\ba_P>0$, and therefore the second entry of $\bA_Q=\ba_Q-\bB_{QP}\bB_{PP}^{-1}\ba_P$ is negative. We may thus write
\[
\bC=
\begin{pmatrix}
c_{11} & c_{14}\\
-c_{41} & c_{44}
\end{pmatrix},
\qquad
\bA_Q=
\begin{pmatrix}
?\\
-A_4
\end{pmatrix},
\]
where the negative sign of $(\bC)_{21}$ is forced by feasibility, since the second equation of the reduced system gives $(\bC)_{21} x_1=-c_{44}x_4-A_4<0$. It follows that $\det(\bC)=c_{11}c_{44}+c_{14}c_{41}>0$. By \cref{lem: schur}, $\det(\bB)=\det(\bB_{PP})\det(\bC)<0$, contradicting the stability assumption and proving that the ecology is impossible.
\end{proof} \vspace{0.1cm}

\noindent We are now ready to state the following theorem. 

\begin{theorem}
All the 11 four-species ecologies represented in \cref{fig:symmetricimpossible}
 and \cref{fig: asymmetricimpossible} are impossible.
 \end{theorem}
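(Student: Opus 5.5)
The theorem is a bookkeeping statement: each of the eleven networks must be matched to a result already proved in this section. We go through the two figures in order and, for each network, check that its sign pattern satisfies the hypotheses of the theorem or lemma we assign to it.

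For \cref{fig:symmetricimpossible}, the corollary following \cref{thm: competing mutualism} already gives all five networks. Network (a) is obligate mutualism with $n=4$, covered by \cref{thm: obligate mutualism}. Networks (b)--(e) each split the four species into two groups, mutualistic inside each group and competitive across groups. In (b) the groups are $\{A\}$ and $\{B,C,D\}$; the singleton group counts as mutualistic vacuously. In (c)--(e) the groups are $\{A,B\}$ and $\{C,D\}$. In every case at least one group consists only of dying species, so \cref{thm: competing mutualism} applies. For (b) the obligate group is $\{B,C,D\}$; for (c)--(e) it is $\{C,D\}$.

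For \cref{fig: asymmetricimpossible}, networks (a), (b), (c) are handled by \cref{lem: eco1asymm}, \cref{lem: eco2asymm} and \cref{lem: eco3asymm}. The remaining three should be the prey--predator cycle families of \cref{sec: 3.2} with $n=4$ and $k=1$ (a 3-cycle plus one extra species):
\begin{itemize}
\item Network (f) has four dying species forming the cycle $A\to B\to D\to C\to A$, with the pairs $A,D$ and $B,C$ competing. It is a single obligate cycle with inner competition, so \cref{thm: multiple odd cycles competitive} applies.
\item Network (d) has a growing species $A$ competing with everyone, plus the 3-cycle $B\to D\to C\to B$ of dying species. This is \cref{THM:SingSpecCompetingOddCycles} with $k=1$.
\item Network (e) has a growing species $A$ preying on $B$, $C$, $D$, plus a 3-cycle on those dying species. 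This is \cref{Thm:OnePredatingCycle} with $k=1$.
\end{itemize}

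The main obstacle is reading the TikZ drawings correctly. We must confirm the orientation convention, that an arrow $i\to j$ means $i$ preys on $j$, matching \cref{fig: Pierregraph}, and that the arrows in (d), (e), (f) really form a directed cycle rather than a transitive triangle. For (f) we also check that its cycle has length $4\geq 3$ and that every non-cycle pair competes. Together with the lemmas this covers all eleven networks, completing the proof.
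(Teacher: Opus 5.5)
Your proposal is correct and follows the paper's proof, which likewise just invokes the two symmetric-network theorems, the three prey--predator cycle theorems, and the three lemmas for asymmetric (a)--(c). Your explicit matching checks out against the drawings: the singleton group $\{A\}$ in symmetric (b), the obligate group $\{C,D\}$ in (c)--(e), and asymmetric (d), (e), (f) as the $k=1$ single-competitor, super-predator and single-$4$-cycle cases (the only slip is the parenthetical ``a 3-cycle plus one extra species,'' which fits (d) and (e) but not (f), as your own bullet for (f) shows).
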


 \begin{proof}
     This follows directly from
\cref{thm: obligate mutualism,thm: competing mutualism,thm: multiple odd cycles competitive,THM:SingSpecCompetingOddCycles,Thm:OnePredatingCycle,lem: eco1asymm,lem: eco2asymm,lem: eco3asymm},
which together cover all five symmetric and all six asymmetric four-species ecologies.
 \end{proof} \vspace{0.1cm}

We close this section with a remark on how the Schur-complement constraints obtained here may be incorporated into the computational Grassmannian framework developed in \cite{Celik2025}, which can be analyzed further in future work. In \cite{Celik2025}, the feasibility and stability conditions for the Lotka--Volterra system were encoded through the matrix $[\diag(\ba)\mid \bB]\in\R^{n\times 2n}$,
whose maximal minors define Pl\"ucker coordinates on the real Grassmannian
$\Gr_{\R}(n,2n)$. Starting from the signs prescribed by an ecological interaction pattern, the algorithm developed there searches for \emph{weak potential completions} of the resulting partial sign assignment, using the oriented Grassmann--Pl\"ucker relations together with feasibility and weak stability constraints. If no weak potential completion exists, then the corresponding ecology is impossible. The converse does not hold: surviving completions may still violate additional algebraic constraints not incorporated into the algorithm. The Schur-complement arguments developed in the present section provide such additional constraints. In several cases, they determine the sign of a Pl\"ucker coordinate whose opposite sign is shared by every weak potential completion returned by the algorithm. More precisely: for ecology~(d) in \cref{fig:symmetricimpossible}, the algorithm returns $64$ weak potential completions, all satisfying $\chi_{1278}=-$, whereas the argument in \cref{thm: competing mutualism} forces $\chi_{1278}=+$; for ecology~(e) in \cref{fig:symmetricimpossible}, the algorithm returns $64$ weak potential completions, all satisfying $\chi_{1278}=+$, whereas the same argument forces $\chi_{1278}=-$; for ecology~(a) in \cref{fig: asymmetricimpossible}, the algorithm returns $1996$ weak potential completions, all satisfying $\chi_{1678}=+$, whereas the Schur-complement argument in \cref{lem: eco1asymm} forces $\chi_{1678}=-$; for ecology~(c) in \cref{fig: asymmetricimpossible}, the algorithm returns $1528$ weak potential completions, all satisfying $\chi_{1567}=-$, whereas the argument in \cref{lem: eco3asymm} forces $\chi_{1567}=+$. Thus, in each of these examples, incorporating the sign constraints obtained from the Schur-complement reductions would eliminate all weak potential completions. This suggests a natural refinement of the Grassmannian completion algorithm by supplementing the oriented Grassmann--Pl\"ucker, feasibility, and weak stability conditions with determinant and minor-sign constraints arising from suitable Schur complements.

\section{Higher-{o}rder Interaction Networks and Symmetric Games}\label{sec:higher}

The impossibility results above are proved within the generalized Lotka--Volterra model, which describes pairwise interactions. This leaves a natural question: can an impossible ecology become possible after adding higher-order terms \cite{gibbs2024can}? We first illustrate this phenomenon with the case of obligate mutualism.

\begin{example}\label{ex: higher obligate mutualism}
Consider four species subject to the dynamics
\begin{equation*}
    \dot x_i=
    x_i\Big(
    -1-x_i+2\sum_{j\neq i}^{4}x_j
    -\frac{2}{3}\sum_{j<k}x_jx_k\Big),
    \quad i \in [4].
\end{equation*}
The constant and linear terms in the parentheses correspond to an obligate mutualism sign pattern: the intrinsic growth rates are negative, the self-interactions are competitive, and all pairwise interactions between distinct species are mutualistic. Let $x_\star=(1,1,1,1)$. This is a feasible equilibrium for the system above. Computing the Jacobian at $x_\star$ gives $J(x_\star)=-3I_4$, so the equilibrium is asymptotically stable. The obligate mutualism sign pattern, which is impossible under pairwise interactions alone, may therefore become possible after adding third-order interactions.
\end{example}

\cref{ex: higher obligate mutualism} is an instance of a higher-order Lotka--Volterra system, a class of models used to describe higher-order interactions in ecological communities (see, for example, \cite{BaireyKelsicKishony2016,GrilliEtAl2017,SinghBaruah2021}). In this section, we recall the classical correspondence between Lotka--Volterra and replicator dynamics, established for pairwise interactions in \cite{Hofbauer1998}. {Building on its recent extension to higher-order interactions in \cite{Gokhalev2}, we make this correspondence explicit and describe the symmetric multiplayer games giving rise to higher-order Lotka--Volterra systems.} We then study the persistence of totally mixed Nash equilibria under perturbations of the payoffs. Finally, we use this correspondence to translate our results on impossible ecologies into nonexistence results for evolutionarily stable strategies of two-player symmetric games.

Let $n\geq 1$ be the number of species in a population and let $d\geq 2$. Let $r=(r_1,\dots,r_n)\in\R^n$ be their growth rates. For each $q \in [d-1]$, let $S^q=(S^q_{i j_1\cdots j_q})$ be a tensor in $\R^{{n}^{q+1}}$. We interpret these tensors as multidimensional tables prescribing interactions of order $q+1$, so that the entry $S^q_{i j_1\cdots j_q}$ may be seen as the joint contribution of the (not necessarily distinct) species indexed by $j_1,\dots,j_q$ to the growth of species $i$. The $d$-th order Lotka--Volterra system is
\begin{equation}\label{eq: hoglv general}
    \dot x_i=x_i\Big(r_i+\sum_{q=1}^{d-1}\sum_{j_1,\dots,j_q=1}^{n}S^q_{i j_1\cdots j_q}x_{j_1}\cdots x_{j_q}\Big), \quad i \in [n].
\end{equation}
It describes the evolution of the $n$ species subject to the interactions prescribed by the tensors $S^q$. We may assume that these tensors are symmetric with respect to the last $q$ indices. To see this, define 
\begin{equation*}
    \operatorname{Sym}(S^q)_{i j_1\cdots j_q}\coloneqq \frac{1}{q!}\sum_{\sigma\in\mathfrak S_q}S^q_{i j_{\sigma(1)}\cdots j_{\sigma(q)}},
\end{equation*}
where $\mathfrak S_q$ denotes the symmetric group acting on $[q]$. A direct computation shows that 
\begin{equation*}
    \sum_{j_1,\dots,j_q=1}^{n} \operatorname{Sym}(S^q)_{i j_1\cdots j_q}
    x_{j_1}\cdots x_{j_q} = \sum_{j_1,\dots,j_q=1}^{n} S^q_{i j_1\cdots j_q}x_{j_1}\cdots x_{j_q},
\end{equation*}
and so the vector field of \labelcref{eq: hoglv general} remains unchanged. From now on, every $S^q$ is assumed symmetric in its last $q$ indices, so it is regarded as an element of $\operatorname{Hom}(\operatorname{Sym}^q\R^n,\R^n)$. Notice that the sign convention in \labelcref{eq: lv} is recovered by setting $r=\ba$ and $S^1=-\bB$. If we set
\begin{equation}\label{eq: Gi polynomials}
    G_i(x)\coloneqq r_i+ \sum_{q=1}^{d-1} \sum_{j_1,\dots,j_q=1}^{n} S^q_{i j_1\cdots j_q} x_{j_1}\cdots x_{j_q},
\end{equation}
\labelcref{eq: hoglv general} rewrites as $\dot x=\diag(x)G(x)$. As in the classical Lotka--Volterra system, a feasible equilibrium is a point $x_\star\in\R^n_{>0}$ such that $G(x_\star)=0$. Again, the coordinate hyperplanes and the positive orthant are invariant for the dynamics of \labelcref{eq: hoglv general}.

\Cref{ex: higher obligate mutualism} is not special to obligate mutualism. Every impossible ecology arising through the classical Lotka--Volterra model can be made possible by adding higher-degree interaction terms, as the following result shows.

\begin{theorem}\label{thm: higher stabilization}
Let $n\geq 1$ and let $r\in\R^n$ and $S^1\in\R^{n^2}$ be arbitrary parameters. Then there exist tensors $S^2\in \R^{n^3}$ and $S^3\in \R^{n^4}$ such that the $4$-th order Lotka--Volterra system 
\begin{equation*}
    \dot x_i=x_i\Big(r_i+\sum_{j=1}^{n}S^1_{ij}x_j+\sum_{j,k=1}^{n}S^2_{ijk}x_jx_k+\sum_{j,k,\ell=1}^{n}S^3_{ijk\ell}x_jx_kx_\ell
    \Big),
    \quad i \in [n],
\end{equation*}
has a feasible asymptotically stable equilibrium at $\mathbf 1\coloneq (1,\dots,1)$.
\end{theorem}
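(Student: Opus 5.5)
We will look for $S^2$ and $S^3$ with a very rigid structure, chosen so that the equilibrium condition and the Jacobian at $\mathbf 1$ reduce to linear conditions we can satisfy directly. At $x=\mathbf 1$ the system is $\dot x=\diag(x)G(x)$, and the Jacobian is $\diag(\mathbf 1)DG(\mathbf 1)=DG(\mathbf 1)$, because $G(\mathbf 1)=0$. So we need two things: $G(\mathbf 1)=0$, and $DG(\mathbf 1)$ Hurwitz; the natural target is $DG(\mathbf 1)=-\bI$, as in \cref{ex: higher obligate mutualism}. Feasibility is automatic since $\mathbf 1>0$.

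We make the ansatz of purely diagonal higher tensors: $S^2_{ijk}=\alpha_i$ if $j=k=i$ and $0$ otherwise, and $S^3_{ijk\ell}=\beta_i$ if $j=k=\ell=i$ and $0$ otherwise. These are trivially symmetric in the last indices. Then
\[
G_i(x)=r_i+\sum_j S^1_{ij}x_j+\alpha_i x_i^2+\beta_i x_i^3 .
\]
Write $\rho_i\coloneqq r_i+\sum_j S^1_{ij}$ and $s_i\coloneqq S^1_{ii}$. The two requirements become
\[
\rho_i+\alpha_i+\beta_i=0,\qquad \frac{\partial G_i}{\partial x_j}(\mathbf 1)=S^1_{ij}+\delta_{ij}\,(2\alpha_i+3\beta_i).
\]
The diagonal terms can be tuned freely, but the off-diagonal entries $S^1_{ij}$, $i\neq j$, cannot be changed by diagonal tensors. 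So a diagonal-only ansatz gives $DG(\mathbf 1)=S^1+\diag(2\alpha_i+3\beta_i)$. This is Hurwitz as soon as the diagonal is very negative, for instance by Gershgorin: take $2\alpha_i+3\beta_i=-1-s_i-\sum_{j\neq i}|S^1_{ij}|$. Together with $\alpha_i+\beta_i=-\rho_i$, this is a $2\times 2$ linear system with determinant $1$, so $\alpha_i,\beta_i$ always exist. Every Gershgorin disc then lies in $\{\operatorname{Re} z\le -1\}$, so the Jacobian is Hurwitz. Asymptotic stability then follows from linearization. This already proves the theorem, and it explains why a cubic term is needed: one quadratic coefficient alone cannot fix both the value and the derivative.

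For a cleaner result one can instead cancel $S^1$ completely and get $DG(\mathbf 1)=-\bI$, using mixed terms. Set $S^2_{ijj}=\gamma_{ij}$ and $S^3_{ijjj}=\delta_{ij}$ (no sum; $\delta_{ij}$ here denotes new coefficients). Each pair contributes $\gamma_{ij}+\delta_{ij}$ to $G_i(\mathbf 1)$ and $2\gamma_{ij}+3\delta_{ij}$ to $\partial_jG_i(\mathbf 1)$. We then solve $2\gamma_{ij}+3\delta_{ij}=-S^1_{ij}-\delta^{\mathrm{Kr}}_{ij}$ and $\gamma_{ij}+\delta_{ij}=-(r_i+S^1_{ij})/n$ per pair, which is again solvable because the determinant is $1$. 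If we avoid this refinement, the Gershgorin version above suffices.

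The only delicate point is making sure the symmetry convention for $S^q$ does not alter the derivative computation. With purely diagonal (or $j=k=\ell$) entries, symmetrization changes nothing, so I expect no real obstacle. The main step to check carefully is the Gershgorin bound giving strictly negative real parts.
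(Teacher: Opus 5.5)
Your main argument is correct and complete, but it follows a different route from the paper.

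\textbf{The paper's construction.} It builds Hermite-type interpolants from $T(x)=x_1+\cdots+x_n$:
\begin{itemize}
\item $\phi_0=3(T/n)^2-2(T/n)^3$, with $\phi_0(\mathbf 1)=1$ and $\nabla\phi_0(\mathbf 1)=0$;
\item $\phi_j=Tx_j/n-2(T/n)^2+(T/n)^3$, with $\phi_j(\mathbf 1)=0$ and $\partial_k\phi_j(\mathbf 1)=\delta_{jk}$.
\end{itemize}
With your $\rho_i=r_i+\sum_jS^1_{ij}$, it adds $q_i=-\rho_i\phi_0-\sum_j(\delta_{ij}+S^1_{ij})\phi_j$ to $G_i$. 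This prescribes both the value and the full gradient at $\mathbf 1$, so the pairwise part is cancelled entirely in the linearization and $J(\mathbf 1)=-I_n$.

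\textbf{Your construction.} You use only higher-order self-interactions $\alpha_ix_i^2+\beta_ix_i^3$. These can move only the diagonal of $DG(\mathbf 1)$, so you replace exact cancellation with strict diagonal dominance and Gershgorin. Both steps check out: the $2\times 2$ system has determinant $1$, giving $\beta_i=c_i+2\rho_i$ and $\alpha_i=-3\rho_i-c_i$, and every disc is centred at $-1-R_i$ with radius $R_i$, so it lies in $\operatorname{Re} z\le -1$.

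\textbf{What each buys.} Your version is sparser and more elementary. It has $2n$ nonzero coefficients and no interspecific higher-order terms. It also shows that higher-order self-regulation alone already breaks every pairwise sign obstruction. The paper's version gives a normal form $J(\mathbf 1)=-I_n$, independent of $(r,S^1)$, with all eigenvalues at $-1$. The cost is dense quadratic and cubic tensors coupling all species. Either suffices for the theorem and for \cref{cor: adding players}, which uses only nonsingularity of the Jacobian.

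\textbf{A slip in your optional refinement.} With $\gamma_{ij}+\delta_{ij}=-(r_i+S^1_{ij})/n$, summing over $j$ gives $G_i(\mathbf 1)=(1-\tfrac1n)\sum_jS^1_{ij}$, which is generally nonzero. The per-pair target should be $\gamma_{ij}+\delta_{ij}=-r_i/n-S^1_{ij}$, or any split whose sum over $j$ equals $-\rho_i$, such as $-\rho_i/n$. With that fix, the refinement also yields $DG(\mathbf 1)=-\bI$. It then becomes a coordinatewise, separable analogue of the paper's interpolation through $T$. Also avoid reusing $\delta_{ij}$ as a name for the cubic coefficients.
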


\begin{proof}
Let $T(x)=x_1+\cdots+x_n$ and consider the polynomials
\[\phi_0(x)\coloneq 3\left(\frac{T(x)}{n}\right)^2-2\left(\frac{T(x)}{n}\right)^3, \qquad \phi_j(x)\coloneq \frac{T(x)x_j}{n}-2\left(\frac{T(x)}{n}\right)^2+\left(\frac{T(x)}{n}\right)^3\]
for $j\in[n]$. These polynomials contain only terms of degrees $2$ and $3$. Moreover, $\phi_0(\mathbf 1)=1$ and $\nabla\phi_0(\mathbf 1)=0$, while $\phi_j(\mathbf 1)=0$ and $\frac{\partial\phi_j}{\partial x_k}(\mathbf 1)=\delta_{jk}$ for every $j,k\in[n]$. For each $i\in[n]$, we define the polynomials
\[q_i(x)\coloneq -\Big(r_i+\sum_{j=1}^{n}S^1_{ij}\Big)\phi_0(x) +\sum_{j=1}^{n}\bigl(-\delta_{ij}-S^1_{ij}\bigr)\phi_j(x).\]
Then, $q_i(\mathbf 1)=-r_i-\sum_{j=1}^{n}S^1_{ij}$, while $\frac{\partial q_i}{\partial x_j}(\mathbf 1)=-\delta_{ij}-S^1_{ij}$.
Since $q_i$ contains only quadratic and cubic terms, its coefficients define certain tensors $S^2\in\R^{n^3}$ and $S^3\in\R^{n^4}$ such that
\[q_i(x)=\sum_{j,k=1}^{n}S^2_{ijk}x_jx_k+\sum_{j,k,\ell=1}^{n}S^3_{ijk\ell}x_jx_kx_\ell.\]
Using these tensors, consider the higher-order Lotka--Volterra system
\[\dot x_i=F_i(x)\coloneq x_i\Big(r_i+\sum_{j=1}^{n}S^1_{ij}x_j+q_i(x)\Big), \quad i \in [n].\]
By the choice of $q_i$, we have $F_i(\mathbf 1)=r_i+\sum_{j=1}^{n}S^1_{ij}+q_i(\mathbf 1)=0$ for each $i$, so $\mathbf 1$ is a feasible equilibrium. Moreover, differentiating the vector field at $\mathbf 1$, we find 
\[\frac{\partial F_i}{\partial x_j}(\mathbf 1)=\delta_{ij}\Big(r_i+\sum_{k=1}^{n}S^1_{ik}+q_i(\mathbf 1)\Big)+S^1_{ij}+\frac{\partial q_i}{\partial x_j}(\mathbf 1)=-\delta_{ij}.\]
Hence, the Jacobian of the system computed at $\mathbf 1$ is $-I_n$, so that $\mathbf 1$ is asymptotically stable.
\end{proof} \vspace{0.1cm}
\cref{thm: higher stabilization} removes every pairwise sign obstruction after quadratic and cubic terms are added. We return to its payoff-robust game-theoretic consequence in \cref{cor: adding players}, after recalling the correspondence between higher-order Lotka--Volterra and replicator dynamics in the next section.

\subsection{Higher-{O}rder Lotka--Volterra and Replicator Dynamics}

We now turn our attention to the replicator dynamics, which describes how the frequencies of competing types change under frequency-dependent selection. Consider a population divided into $n+1$ subtypes. Its state is a point $y(t)=(y_1(t),\dots,y_{n+1}(t))\in\Delta_n$, where $y_i(t)$ is interpreted as the frequency of type $i$ in the population at time $t$, while $\Delta_n$ denotes the standard $n$-dimensional probability simplex. Assuming that the population is large and well-mixed, we can suppose that $y(t)$ is a smooth function that evolves on $\Delta_n$. We will drop the $t$ and speak of $y$ and $y_i$. Let $A=(A_{i k_1\cdots k_{d-1}})\in\operatorname{Hom}(\operatorname{Sym}^{d-1}\R^{n+1},\R^{n+1})$. We model the \emph{fitness of subtype $i$}, which can be thought of as its reproductive success, with the homogeneous polynomial of degree $d-1$
\begin{equation}\label{eq: higher fitness}
    f_i(y)=\sum_{k_1,\dots,k_{d-1}=1}^{n+1}A_{i k_1\cdots k_{d-1}}y_{k_1}\cdots y_{k_{d-1}}, \quad i \in [n+1].
\end{equation}
As with the tensor $S^q$ from \eqref{eq: hoglv general}, $A$ need not be symmetric in the last $d-1$ indices, but we may assume so without loss of generality. We call $A$ the \emph{fitness tensor}. The coefficient $A_{i k_1\cdots k_{d-1}}$ can be seen as the contribution to the fitness of subtype $i$ when its $d-1$ partners have types $k_1,\dots,k_{d-1}$. The rate $\dot y_i/y_i$ measures the evolutionary success of subtype $i$ and can be expressed as the difference between the fitness $f_i(y)$ and the average fitness of the population, $\bar f(y)=\sum_{i=1}^{n+1}y_i f_i(y)$. This gives the following system of ODEs, known as replicator equations:
\begin{equation}\label{eq: higher replicator}
    \dot y_i=y_i(f_i(y)-\bar f(y)),
    \quad i \in [n+1].
\end{equation}
The faces of $\Delta_n$ and its interior are invariant under the dynamics of \labelcref{eq: higher replicator}. If $y\in\operatorname{int}\Delta_n$, then $\dot y_i=0$ for every $i$ if and only if $f_i(y)=\bar f(y)$ for every $i$, which is equivalent to $f_1(y)=\cdots=f_{n+1}(y)$. We call such a point an \emph{internal equilibrium}.

\noindent Let $H=\{y\in\Delta_n:y_{n+1}>0\}$. We define the map
\begin{equation}\label{eq: diffeo simplex}
    \Phi:H\longrightarrow\R^n_{\geq0},
    \quad
    \Phi(y)=
    \left(\frac{y_1}{y_{n+1}},\dots,\frac{y_n}{y_{n+1}}\right).
\end{equation}
This map is a diffeomorphism with inverse $\Phi^{-1}(x)=(x_1,\dots,x_n,1)/(1+\sum_{j=1}^{n}x_j)$. Let $x_i=y_i/y_{n+1}$ for $i \in [n]$. We fix $q\in[d-1]$ and $j_1,\dots,j_q\in[n]$ and define parameters
\begin{equation}\label{eq: psi coordinates}
    \begin{split}
    r_i&\coloneq A_{i,n+1,\dots,n+1}-A_{n+1,n+1,\dots,n+1},\\
    S^q_{i j_1\cdots j_q}&\coloneq\binom{d-1}{q}(A_{i,j_1,\dots,j_q,n+1,\dots,n+1}-A_{n+1,j_1,\dots,j_q,n+1,\dots,n+1}).
    \end{split}
\end{equation}
In the right-hand side of the second expression, there are $d-1-q$ fixed indices of $A$ equal to $n+1$, and the binomial coefficient counts the possible positions that can be occupied by the $q$ non-fixed indices among the $d-1$ partner entries. We refer to $n+1$ as the dummy index and to the $j_a$ as the non-dummy ones.\\

{\noindent The classical correspondence between replicator dynamics and pairwise Lotka--Volterra systems is described in
\cite{bomze1983lotka,Hofbauer1998}. The higher-order extension is outlined in \cite{Gokhalev2}. Here we give the correspondence explicitly for arbitrary order using tensors symmetric in their partner indices, with all computations, and introduce notation that will be used throughout. Moreover, we exploit it to study the associated fitness tensors and symmetric multiplayer games in Section~\ref{sec: 4.2}.
}

\begin{theorem}\label{thm: higher hofbauer}
Let $A\in\operatorname{Hom}(\operatorname{Sym}^{d-1}\R^{n+1},\R^{n+1})$, and let $(r,S^1,\dots,S^{d-1})$ be defined by \labelcref{eq: psi coordinates}. The diffeomorphism $\Phi:H\to\R^n_{\geq0}$ maps the orbits of the replicator system \labelcref{eq: higher replicator} to the orbits of the higher-order Lotka--Volterra system \labelcref{eq: hoglv general}, after the smooth increasing time reparametrization $d\tau=y_{n+1}^{d-1}dt$. In particular, we have a bijection between feasible equilibria of \labelcref{eq: hoglv general} and internal equilibria of \labelcref{eq: higher replicator} which respects their local stability properties.
\end{theorem}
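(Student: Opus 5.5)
The plan is to compute directly how $x_i=y_i/y_{n+1}$ evolves along solutions of \labelcref{eq: higher replicator}, and to recognise the result as a positive multiple of the higher-order Lotka--Volterra vector field \labelcref{eq: hoglv general}.

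\textbf{Step 1: the quotient rule.} By the quotient rule,
\[
\dot x_i=\frac{\dot y_i y_{n+1}-y_i\dot y_{n+1}}{y_{n+1}^2}=x_i\bigl(f_i(y)-f_{n+1}(y)\bigr).
\]
The average fitness $\bar f$ cancels here, which is the essential simplification.

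\textbf{Step 2: homogeneity.} Since $f_i$ is homogeneous of degree $d-1$, and $y=y_{n+1}(x_1,\dots,x_n,1)$ on $H$, we have $f_i(y)=y_{n+1}^{d-1}f_i(x,1)$. Therefore
\[
\dot x_i=y_{n+1}^{d-1}\,x_i\bigl(f_i(x,1)-f_{n+1}(x,1)\bigr).
\]

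\textbf{Step 3: expanding the fitness polynomial.} The main bookkeeping step is expanding $f_i(x,1)=\sum A_{ik_1\cdots k_{d-1}}x_{k_1}\cdots x_{k_{d-1}}$, where $x_{n+1}:=1$. I will group the terms according to the number $q$ of partner indices different from $n+1$.
\begin{itemize}
\item For fixed $q$ and a fixed ordered tuple $(j_1,\dots,j_q)\in[n]^q$, the symmetry of $A$ in its partner indices means every placement of these $q$ indices among the $d-1$ slots (the remaining slots being $n+1$) gives the same coefficient. There are $\binom{d-1}{q}$ such placements.
\item Summing over ordered tuples $(j_1,\dots,j_q)$ then reproduces each term exactly once, since choosing which slots are non-dummy and filling them with an ordered tuple enumerates all index tuples.
\end{itemize}
Hence
\[
f_i(x,1)=A_{i,n+1,\dots,n+1}+\sum_{q=1}^{d-1}\binom{d-1}{q}\sum_{j_1,\dots,j_q\in[n]}A_{i,j_1,\dots,j_q,n+1,\dots,n+1}\,x_{j_1}\cdots x_{j_q}.
\]
Subtracting the analogous expression for $i=n+1$ gives exactly $G_i(x)$ from \labelcref{eq: Gi polynomials}, with the parameters of \labelcref{eq: psi coordinates}. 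The resulting $S^q$ are symmetric in their last $q$ indices, as required. This counting is where I expect care is needed; there is no real obstacle.

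\textbf{Step 4: time change and equilibria.} We now have $dx/dt=y_{n+1}^{d-1}\diag(x)G(x)$. On $H$ we have $y_{n+1}>0$, so $d\tau=y_{n+1}^{d-1}dt$ is a smooth, strictly increasing reparametrization, and with respect to $\tau$ the curve $x$ solves \labelcref{eq: hoglv general}. Thus $\Phi$ maps orbits to orbits, preserving their direction. Interior points of $\Delta_n$ correspond under $\Phi$ to $\R^n_{>0}$. Because $y_{n+1}^{d-1}>0$ and $x_i>0$, the right-hand side vanishes if and only if $G(x)=0$. Equivalently, $f_i(y)=f_{n+1}(y)$ for all $i$, which is the definition of an internal equilibrium. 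This gives the claimed bijection.

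\textbf{Step 5: stability.} Stability transfers because $\Phi$ is a diffeomorphism between $\operatorname{int}\Delta_n$ and $\R^n_{>0}$, which maps neighbourhoods to neighbourhoods and orbits to orbits. The time change is monotone and leaves orbits unchanged, so it does not affect Lyapunov stability, attraction, or asymptotic stability. For Hurwitz linearizations, the Jacobian at an equilibrium is multiplied by the positive scalar $y_{n+1}^{d-1}$ and conjugated by $D\Phi$. Since $\diag(x)G$ vanishes at the equilibrium, the derivative of the factor $y_{n+1}^{d-1}$ contributes nothing there. Consequently the signs of the real parts of the eigenvalues are preserved.
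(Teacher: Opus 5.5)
Your proposal is correct and follows the paper's proof essentially step by step: the quotient rule cancels $\bar f$, then the factor $y_{n+1}^{d-1}$ is extracted (you via homogeneity, the paper by direct expansion) with the $\binom{d-1}{q}$ count of dummy-index placements, and finally the time change $d\tau=y_{n+1}^{d-1}dt$ is applied. Your Step 5 additionally spells out the transfer of stability (conjugation by $D\Phi$ and scaling by $y_{\star,n+1}^{d-1}>0$ at the equilibrium), which the paper leaves implicit.
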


\begin{proof}
We start by differentiating $x_i=y_i/y_{n+1}$ along the replicator system and obtain
\begin{equation}\label{eq: coordinate derivative}
    \dot x_i=
    \frac{\dot y_i y_{n+1}-y_i\dot y_{n+1}}{y_{n+1}^2}=
    \frac{\dot y_i}{y_{n+1}}
    -x_i\frac{\dot y_{n+1}}{y_{n+1}}.
\end{equation}
From \labelcref{eq: higher replicator}, we get for $i \in [n]$
\begin{equation}\label{eq: replicator ratios}
    \frac{\dot y_i}{y_{n+1}}
    =x_i(f_i(y)-\bar f(y)),
    \quad
    \frac{\dot y_{n+1}}{y_{n+1}}
    =f_{n+1}(y)-\bar f(y).
\end{equation}
Plugging \labelcref{eq: replicator ratios} in \labelcref{eq: coordinate derivative} cancels the average fitness and gives for $i \in [n]$
\begin{equation}\label{eq: fitness difference}
    \dot x_i=x_i(f_i(y)-f_{n+1}(y)).
\end{equation}

\noindent Let $B_{i k_1\cdots k_{d-1}}\coloneq A_{i k_1\cdots k_{d-1}}-A_{n+1,k_1\cdots k_{d-1}}$. We have
\begin{equation*}
    f_i(y)-f_{n+1}(y)=B_{i,n+1,\dots,n+1} \, y_{n+1}^{d-1}+\sum_{q=1}^{d-1}\binom{d-1}{q}y_{n+1}^{d-1-q}\!\!\!\sum_{j_1,\dots,j_q=1}^{n}B_{i,j_1,\dots,j_q,n+1,\dots,n+1}y_{j_1}\cdots y_{j_q}.
\end{equation*}
Each summand in the term of order $q$ contains $q$ factors $y_{j_a}$ and $d-1-q$ factors $y_{n+1}$, so every term contains the common factor $y_{n+1}^{d-1}$. For $j\leq n$, the definition of $x_j$ gives $y_j=x_jy_{n+1}$ and, using \labelcref{eq: psi coordinates}, the above expression reduces to
\begin{equation*}
    f_i(y)-f_{n+1}(y)=y_{n+1}^{d-1}
    \Big(r_i+\sum_{q=1}^{d-1}\sum_{j_1,\dots,j_q=1}^{n}S^q_{i j_1\cdots j_q}x_{j_1}\cdots x_{j_q}\Big)=y_{n+1}^{d-1}G_i(x).
\end{equation*}
Equation \labelcref{eq: fitness difference} now reads $\dot x_i=x_i y_{n+1}^{d-1}G_i(x)$. Since $y_{n+1}>0$ on $H$, the function $\tau(t)=\int_0^t y_{n+1}(s)^{d-1}\,ds$ is strictly increasing. With the substitution $d\tau=y_{n+1}^{d-1}\,dt$, we get $dx_i/d\tau=x_iG_i(x)$ for every $i$, recovering the higher-order Lotka--Volterra system.
\end{proof} \vspace{0.1cm}

\noindent The proof above identifies the higher-order Lotka--Volterra coefficients obtained from the fitness tensor. We collect this relationship in the linear map
\begin{equation*}
    \begin{split}
    \Psi_{d-1}:\operatorname{Hom}(\operatorname{Sym}^{d-1}\R^{n+1},\R^{n+1})&\longrightarrow\R^n\oplus\bigoplus_{q=1}^{d-1}\operatorname{Hom}(\operatorname{Sym}^q\R^n,\R^n),\\
    A&\longmapsto(r,S^1,\dots,S^{d-1}).
    \end{split}
\end{equation*}

The map $\Psi_{d-1}$ is not injective: different fitness tensors may determine the same higher-order Lotka--Volterra system, because replicator dynamics depends only on differences of fitnesses. The next proposition identifies this ambiguity precisely. We then show that every higher-order Lotka--Volterra system arises from a fitness tensor and choose a canonical representative in each equivalence class.

\begin{proposition}\label{prop: fitness shift}
Let $C\in\operatorname{Sym}^{d-1}((\R^{n+1})^\ast)$, and define $\widetilde{A}_{i k_1\cdots k_{d-1}}\coloneq A_{i k_1\cdots k_{d-1}}+C_{k_1\cdots k_{d-1}}$ for every $i,k_1,\dots,k_{d-1}$. Then $A$ and $\widetilde{A}$ define the same replicator vector field on $\Delta_n$. In particular, $\ker\Psi_{d-1}$ is the space of tensors of this form.
\end{proposition}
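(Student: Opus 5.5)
The plan has two parts: first show directly that shifting by $C$ leaves the replicator field unchanged, then identify $\ker\Psi_{d-1}$ from the explicit formulas \labelcref{eq: psi coordinates}.

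\textbf{Invariance.} Write $c(y)\coloneq\sum_{k_1,\dots,k_{d-1}}C_{k_1\cdots k_{d-1}}y_{k_1}\cdots y_{k_{d-1}}$. This polynomial does not depend on $i$, so $\widetilde f_i(y)=f_i(y)+c(y)$ for every $i$. On $\Delta_n$ we have $\sum_i y_i=1$, hence $\bar{\widetilde f}(y)=\bar f(y)+c(y)$. The term $c(y)$ therefore cancels in $\widetilde f_i-\bar{\widetilde f}$, and the two replicator fields \labelcref{eq: higher replicator} coincide.

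\textbf{Inclusion $\{C\text{-shifts}\}\subseteq\ker\Psi_{d-1}$.} This follows directly from \labelcref{eq: psi coordinates}. Every coordinate of $\Psi_{d-1}(A)$ is a difference $A_{i,\kappa}-A_{n+1,\kappa}$ for some partner multi-index $\kappa$. A shift by $C$ adds $C_\kappa$ to both terms, so the difference is unchanged and $\Psi_{d-1}$ of a pure shift is zero. Since $\Psi_{d-1}$ is linear, the $C$-shift of $A$ has the same image as $A$.

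\textbf{Reverse inclusion.} Let $A\in\ker\Psi_{d-1}$. By \labelcref{eq: psi coordinates}, $r_i=0$ and $S^q_{ij_1\cdots j_q}=0$ for all $q\in[d-1]$, all $i\in[n]$ and all $j_1,\dots,j_q\in[n]$. The binomial factor $\binom{d-1}{q}$ is nonzero, so this says
\[
A_{i,\kappa}=A_{n+1,\kappa}
\]
for every partner multi-index $\kappa$ of the form $(j_1,\dots,j_q,n+1,\dots,n+1)$.

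Now use the symmetry of $A$ in its last $d-1$ indices. Every multi-index $\kappa\in[n+1]^{d-1}$ is a permutation of one of that form: collect the entries different from $n+1$ as $j_1,\dots,j_q$, where $q=0$ corresponds to $r_i$. Hence $A_{i,\kappa}=A_{n+1,\kappa}$ for all $i$ and all $\kappa$.

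Set $C_\kappa\coloneq A_{n+1,\kappa}$. It is symmetric in $\kappa$ because $A$ is, so $C\in\operatorname{Sym}^{d-1}((\R^{n+1})^\ast)$. With this choice $A_{i\kappa}=0+C_\kappa$, so $A$ is exactly the $C$-shift of the zero tensor.

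The only delicate step is this reduction of an arbitrary multi-index to the normalized form with the dummy indices at the end, which is where the assumed symmetry in the partner indices is essential. Beyond that the argument is bookkeeping.
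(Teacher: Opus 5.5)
Your proposal is correct and follows essentially the same route as the paper: the shift $c(y)$ cancels against the average fitness because $\sum_i y_i=1$, every difference defining $\Psi_{d-1}$ is unchanged by the shift, and for the converse you reorder an arbitrary partner tuple so that the dummy indices come last and take $C$ to be the last row slice $A_{n+1,\cdot}$. Your remarks that $\binom{d-1}{q}\neq 0$ and that $C$ inherits its symmetry from $A$ are small details the paper leaves implicit.
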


\begin{proof}
Let $c(y)\coloneq \sum_{k_1,\dots,k_{d-1}=1}^{n+1}C_{k_1\cdots k_{d-1}}y_{k_1}\cdots y_{k_{d-1}}$. The fitnesses defined by $\widetilde{A}$ satisfy $g_i(y)=f_i(y)+c(y)$ for every $i$. Since $\sum_i y_i=1$ on $\Delta_n$, their average satisfies
\begin{equation*}
    \bar{g}(y)=\sum_{i=1}^{n+1}y_i(f_i(y)+c(y))=\bar f(y)+c(y).
\end{equation*}
It follows that $g_i(y)-\bar g(y)=f_i(y)-\bar f(y)$ for every $i$, and therefore the replicator vector fields coincide. Every tensor of the stated form belongs to $\ker\Psi_{d-1}$, as all differences in \labelcref{eq: psi coordinates} vanish. For the converse, suppose that $\Psi_{d-1}(A)=0$. Let $i\in[n]$ and consider a tuple $(k_1,\dots,k_{d-1})\in[n+1]^{d-1}$. Let $q$ be the number of entries of this tuple that lie in $[n]$. After permuting the last $d-1$ indices, the tuple has the form $(j_1,\dots,j_q,n+1,\dots,n+1)$. The vanishing of the corresponding coordinate of $\Psi_{d-1}(A)$ gives $A_{i k_1\cdots k_{d-1}}=A_{n+1,k_1\cdots k_{d-1}}$. Set $C_{k_1\cdots k_{d-1}}\coloneq A_{n+1,k_1\cdots k_{d-1}}$. Then $A_{i k_1\cdots k_{d-1}}=C_{k_1\cdots k_{d-1}}$ for every $i\in[n+1]$, so every row slice of $A$ is equal to $C$, which concludes the proof.
\end{proof} \vspace{0.1cm}

The kernel in \cref{prop: fitness shift} describes the freedom in choosing a fitness tensor for a fixed higher-order Lotka--Volterra system. The next result removes this freedom by choosing one representative in every class.

\begin{proposition}\label{prop: canonical representative}
The map $\Psi_{d-1}$ is surjective. Each class modulo $\ker\Psi_{d-1}$ has a unique representative $A^0$ whose $(n+1)$-st row slice is zero. For given parameters $(r,S^1,\dots,S^{d-1})$, this representative is defined by $A^0_{n+1,k_1,\dots,k_{d-1}}=0$ for all $k_1,\dots,k_{d-1}$, by $A^0_{i,n+1,\dots,n+1}=r_i$ for $i\leq n$, and, for $q \in [d-1]$, by
\begin{equation}\label{eq: canonical representative}
    A^0_{i,j_1,\dots,j_q,n+1,\dots,n+1}
    =
    \frac{S^q_{i j_1\cdots j_q}}{\binom{d-1}{q}},
    \quad i,j_1,\dots,j_q\in[n],
\end{equation}
with the same value for every permutation of the last $d-1$ indices.
\end{proposition}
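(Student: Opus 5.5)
The proof will be constructive, followed by a uniqueness argument that uses \cref{prop: fitness shift}. Fix parameters $(r,S^1,\dots,S^{d-1})$, with every $S^q$ symmetric in its last $q$ indices. We define $A^0$ entrywise. For $i=n+1$ we set $A^0_{n+1,k_1,\dots,k_{d-1}}=0$. For $i\le n$ and a partner tuple $(k_1,\dots,k_{d-1})\in[n+1]^{d-1}$, let $q$ be the number of non-dummy entries. Write those entries, in any order, as $j_1,\dots,j_q$. If $q=0$ we set the entry equal to $r_i$; otherwise we set it equal to $S^q_{ij_1\cdots j_q}/\binom{d-1}{q}$.

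The first thing to check is that this is well defined and lies in $\operatorname{Hom}(\operatorname{Sym}^{d-1}\R^{n+1},\R^{n+1})$. A permutation of the last $d-1$ indices leaves $q$ unchanged and only permutes the multiset $\{j_1,\dots,j_q\}$. Since $S^q$ is symmetric in its last $q$ indices, the value does not change. Hence $A^0$ is symmetric in its partner indices, and \eqref{eq: canonical representative} holds for every ordering.

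Next we verify surjectivity by computing $\Psi_{d-1}(A^0)$ from \eqref{eq: psi coordinates}. The $(n+1)$-st row slice vanishes, so every subtracted term $A^0_{n+1,\dots}$ is zero. This leaves $r_i=A^0_{i,n+1,\dots,n+1}$ and $\binom{d-1}{q}A^0_{i,j_1,\dots,j_q,n+1,\dots,n+1}=S^q_{ij_1\cdots j_q}$, as required. So every tuple of parameters is attained, and $\Psi_{d-1}$ is surjective.

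For existence and uniqueness of a representative in an arbitrary class, take any $A$. Let $C_{k_1\cdots k_{d-1}}\coloneq A_{n+1,k_1\cdots k_{d-1}}$; this is symmetric because $A$ is symmetric in its partner indices. Subtracting $C$ from every row slice gives $A-C\in A+\ker\Psi_{d-1}$ by \cref{prop: fitness shift}, and its last row slice is zero. For uniqueness, suppose $A^0$ and $A^1$ are both in the class and both have zero last row slice. Then $A^0-A^1\in\ker\Psi_{d-1}$, so by \cref{prop: fitness shift} every row slice of $A^0-A^1$ equals a common tensor $C$. The last row slice is zero, so $C=0$ and $A^0=A^1$. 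In particular, the representative with image $(r,S^1,\dots,S^{d-1})$ is exactly the tensor constructed above.

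The only delicate point is the bookkeeping in the first step. The binomial normalization must be consistent with the symmetrization convention of $\Psi_{d-1}$, and the entry must be independent of where the dummy indices sit among the partner slots. Both follow from the symmetry of $S^q$ and of $A$.
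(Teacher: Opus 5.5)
Your proof is correct and follows the paper's argument. You construct $A^0$, check that $\Psi_{d-1}(A^0)=(r,S^1,\dots,S^{d-1})$, and then obtain existence and uniqueness of the zero-last-row representative by subtracting the last row slice and invoking \cref{prop: fitness shift}. The only differences are cosmetic: you read off $\Psi_{d-1}(A^0)$ directly from \eqref{eq: psi coordinates} and explicitly check well-definedness under permutations, whereas the paper passes through the fitness identity $f_i(y)=y_{n+1}^{d-1}G_i(x)$.
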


\begin{proof}
The last row of $A^0$ is zero, so the fitness of subtype $n+1$ is $f_{n+1}(y)=0$. For $i\leq n$, the entries with exactly $q$ non-dummy indices occur in $\binom{d-1}{q}$ positions. From \labelcref{eq: canonical representative} we get
\begin{equation*}
    f_i(y)=r_i y_{n+1}^{d-1}+\sum_{q=1}^{d-1}y_{n+1}^{d-1-q}\sum_{j_1,\dots,j_q=1}^{n}S^q_{i j_1\cdots j_q}y_{j_1}\cdots y_{j_q}=y_{n+1}^{d-1}G_i(x).
\end{equation*}
Reading the coefficients through \labelcref{eq: psi coordinates} gives $\Psi_{d-1}(A^0)=(r,S^1,\dots,S^{d-1})$, which proves surjectivity. For any tensor $A$, we subtract from every row slice the tensor $C$ defined by $C_{k_1\cdots k_{d-1}}=A_{n+1,k_1\cdots k_{d-1}}$. By \cref{prop: fitness shift}, this does not change the class modulo $\ker\Psi_{d-1}$, and the resulting tensor has zero last row slice. If two representatives in the same class both have zero last row slice, their difference lies in the kernel and has the form described in \cref{prop: fitness shift}. Its last row slice is $C$, so $C=0$ and the two representatives coincide.
\end{proof} 

\begin{definition}\label{def: canonical representative}
    We call the unique representative of a class modulo $\ker\Psi_{d-1}$ whose $(n+1)$-st row slice is zero the \emph{canonical representative} of the class.
\end{definition}

\noindent Below, we explicitly describe the correspondence and the construction of the canonical representative for $d=2$ and $d=3$.

\begin{example}
For $d=2$, the fitness tensor $A$ is a matrix and \labelcref{eq: hoglv general} reduces to $\dot x_i=x_i(r_i+\sum_{j=1}^nS^1_{ij}x_j)$, i.e., to the usual Lotka--Volterra system. Formula \labelcref{eq: psi coordinates} gives $r_i=A_{i,n+1}-A_{n+1,n+1}$ and $S^1_{ij}=A_{ij}-A_{n+1,j}$. The canonical representative is the matrix whose last row is zero, whose last column above that row is $r$, and whose upper-left $n\times n$ block is $S^1$. For $d=3$, the fitness tensor $A$ is a third-order tensor and \labelcref{eq: hoglv general} reduces to
\begin{equation*}\dot x_i=x_i\Big(r_i+\sum_{j=1}^{n}S^1_{ij}x_j+\sum_{j,k=1}^{n}S^2_{ijk}x_jx_k\Big), \quad i \in [n].\end{equation*}
The parameters are $r_i=A_{i,n+1,n+1}-A_{n+1,n+1,n+1}$, $S^1_{ij}=2(A_{ij,n+1}-A_{n+1,j,n+1})$, and $S^2_{ijk}=A_{ijk}-A_{n+1,jk}$. In the canonical representative, $A^0_{i,n+1,n+1}=r_i$, $A^0_{ij,n+1}=A^0_{i,n+1,j}=S^1_{ij}/2$, $A^0_{ijk}=S^2_{ijk}$, and the last row slice is zero. We remark that this model often appears in literature as higher-order Lotka--Volterra, although by the latter we mean any instance of \labelcref{eq: hoglv general}, where $d>2$. 
\end{example}

We now use the polynomial structure of \labelcref{eq: hoglv general} to bound the number of feasible equilibria, that is, the number of positive common zeros of $G_1,\dots,G_n$ from \eqref{eq: Gi polynomials}, each of degree at most $d-1$. By \cref{thm: higher hofbauer}, these bounds also apply to the internal equilibria of the associated replicator systems. The first bound is well-known in the evolutionary multiplayer games literature, see \cite{gokhale2010evolutionary,gokhale2014evolutionary, han2012equilibrium}. We recover it in the higher-order Lotka--Volterra setting directly from the Bernstein--Kushnirenko--Khovanskii (BKK) theorem, done similarly in e.g., \cite{AboEtAl2025VectorBundle, MCKELVEY1997411}, and then obtain a sharper bound under the squarefree-support assumption in Proposition~\ref{prop: squarefreesharp}.

\begin{proposition}
Assume that the system $G_1=\cdots=G_n=0$ has finitely many solutions in $(\mathbb C^\ast)^n$. Then the number of feasible equilibria of \labelcref{eq: hoglv general} is at most $(d-1)^n$. Moreover, for a generic system with full support, this bound is sharp for real feasible equilibria.
\end{proposition}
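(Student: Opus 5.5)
The plan is to bound the positive solutions by all solutions in the torus and then apply the BKK theorem. Feasible equilibria are points $x\in\R^n_{>0}$ with $G_1(x)=\cdots=G_n(x)=0$, and every such point lies in $(\mathbb C^\ast)^n$. Since the system has finitely many solutions there, Bernstein's theorem says the number of isolated solutions, counted with multiplicity, is at most the mixed volume $\operatorname{MV}(P_1,\dots,P_n)$. Here $P_i$ is the Newton polytope of $G_i$.

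Each $G_i$ from \eqref{eq: Gi polynomials} has degree at most $d-1$, so its support lies in $\{m\in\mathbb Z_{\ge0}^n:|m|\le d-1\}$. Hence $P_i\subseteq (d-1)\Delta$, where $\Delta=\operatorname{conv}(0,e_1,\dots,e_n)$. Mixed volume is monotone under inclusion in each argument, so
\[
\operatorname{MV}(P_1,\dots,P_n)\le \operatorname{MV}\bigl((d-1)\Delta,\dots,(d-1)\Delta\bigr)=n!\operatorname{vol}((d-1)\Delta)=(d-1)^n.
\]
Alternatively, B\'ezout's theorem in $\P^n$ gives the same bound directly. This proves the first claim.

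For sharpness I will exhibit one explicit system with full support and $(d-1)^n$ positive real nondegenerate solutions, and then perturb it. For the explicit system, choose distinct positive reals $c_1,\dots,c_{d-1}$ and set
\[
G_i(x)=\prod_{k=1}^{d-1}(x_i-c_k),
\]
which is of the form \eqref{eq: Gi polynomials} with $r_i=\prod_k(-c_k)$. Its solutions are the grid points $(c_{k_1},\dots,c_{k_n})$. There are $(d-1)^n$ of them, all positive, and each is nondegenerate because the Jacobian is diagonal with nonzero entries. Since these already attain the BKK bound, there are no further solutions in $(\mathbb C^\ast)^n$.

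Next I add $\varepsilon$ times arbitrary coefficients on all remaining monomials of degree $\le d-1$, so the support becomes full. By the implicit function theorem each nondegenerate positive real root persists as a positive real root for small $\varepsilon$. So there is an open set of full-support coefficient vectors with $(d-1)^n$ feasible equilibria, and by the upper bound these are all the roots in the torus.

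The main obstacle is interpreting ``generic'' in the sharpness claim. The number of real positive roots is not constant on a Zariski-open set, so the statement can only mean the bound is attained on a nonempty open set of full-support systems (for instance near the product system above). Within that set, the finiteness hypothesis and nondegeneracy hold automatically. I would state that reading explicitly. I would also note that the time reparametrization in \cref{thm: higher hofbauer} transfers the count to internal equilibria of the replicator system.
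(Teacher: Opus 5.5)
Your proposal is correct and follows essentially the same route as the paper: the BKK bound with monotonicity of mixed volume into $(d-1)\Delta_n$, then the product system $\prod_{k}(x_i-c_k)$ (the paper takes $c_k=k$) with nondegenerate positive grid roots, perturbed to full support by the implicit function theorem to obtain a nonempty Euclidean open set of full-support systems attaining $(d-1)^n$ feasible equilibria. The paper additionally records the complex sense of genericity that you set aside: a general full-support system has exactly $(d-1)^n$ simple roots in $(\mathbb C^\ast)^n$.
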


\begin{proof}
Let $k=d-1$. For every $i\in[n]$, let $P_i$ be the Newton polytope of $G_i$, namely the convex hull of the exponent vectors of the monomials that occur in $G_i$. Since every $G_i$ has degree at most $k$, we have $P_i\subseteq k\Delta_n$. By assumption, every common zero in $(\mathbb C^\ast)^n$ is isolated. The BKK theorem bounds the number of these zeros, counted with multiplicity, by the mixed volume $\operatorname{MV}(P_1,\dots,P_n)$, where we use the normalization $\operatorname{MV}(P,\dots,P)=n!\operatorname{vol}(P)$. By monotonicity of mixed volume, this is at most $\operatorname{MV}(k\Delta_n,\dots,k\Delta_n)=n!\operatorname{vol}(k\Delta_n)=k^n$. Every feasible equilibrium is one of these zeros, so the number of feasible equilibria is at most $k^n=(d-1)^n$. If the system has full support, then every monomial of degree at most $k$ occurs and $P_i=k\Delta_n$ for every $i$. Systems with full supports form a nonempty Zariski open subset of the coefficient space. For a general system in this open set all roots are simple, and BKK theorem gives exactly $\operatorname{MV}(k\Delta_n,\dots,k\Delta_n)=k^n$ solutions in $(\mathbb C^\ast)^n$.

Lastly, the bound $k^n = (d-1)^n$ is sharp also for real feasible equilibria. Indeed, we may choose the parameters so that $G_i(x)=\prod_{a=1}^{k}(x_i-a)$ for every $i\in[n]$. These polynomials have degree $k$ and hence define an instance of \labelcref{eq: hoglv general}. Their common zero set is $\{1,\dots,k\}^n$, so the system has exactly $k^n$ feasible equilibria. Moreover, every such zero is nonsingular. Indeed, at $\alpha\in\{1,\dots,k\}^n$ the Jacobian of $G=(G_1,\dots,G_n)$ is diagonal, and its $i$-th diagonal entry is $\prod_{a\in[k]\setminus\{\alpha_i\}}(\alpha_i-a)\neq0$. Since these zeros are nonsingular and lie in $\R^n_{>0}$, the implicit function theorem implies that, under a sufficiently small real perturbation of the coefficients, each of them deforms to a unique nearby simple zero which remains in $\R^n_{>0}$. Such a perturbation may be chosen so that every polynomial coefficient is non-zero. Thus, the bound is attained also on a nonempty Euclidean open set of full-support real systems.
\end{proof} \vspace{0.1cm}

\noindent So far, the partner indices in a higher-order interaction may repeat. When one wants to model only joint effects of distinct partner species, the natural restriction is the following.

\begin{definition}
We call the subsystem of \labelcref{eq: hoglv general} in which $S^q_{i j_1\cdots j_q}=0$, the \textit{squarefree higher-order Lotka--Volterra system}, whenever two of the indices $j_1,\dots,j_q$ are equal.
\end{definition}

In this case, each set $J$ of distinct species has at most one joint contribution to the growth rate of a focal species $i$. When present, the sign of this contribution is therefore unambiguous. Notice that the system in \cref{ex: higher obligate mutualism} is squarefree. The squarefree restriction makes the Newton polytope smaller, and it improves our equilibrium bound. 

\begin{proposition}\label{prop: squarefreesharp}
Let $k=d-1$ and assume $1\leq k<n$. If every $G_i$ has squarefree support contained in $\{\alpha\in\{0,1\}^n:|\alpha|\leq k\}$ and the common zero set in $(\mathbb C^\ast)^n$ is finite, then the number of feasible equilibria is at most
\begin{equation*}
    \sum_{\ell=0}^{k}(-1)^\ell\binom{n}{\ell}(k-\ell)^n.
\end{equation*}
In particular, this number is the normalized volume of the independence polytope of the uniform matroid $U_{k,n}$.
\end{proposition}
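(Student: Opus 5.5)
The argument follows the preceding proposition: apply the BKK theorem, this time with the smaller Newton polytope forced by squarefree support. Let
\[
P_{k,n}\coloneq\operatorname{conv}\{\alpha\in\{0,1\}^n:|\alpha|\leq k\}=\{x\in[0,1]^n:\textstyle\sum_i x_i\leq k\},
\]
which is the independence polytope of the uniform matroid $U_{k,n}$. The support hypothesis gives $P_i\subseteq P_{k,n}$ for every $i$. Since the common zeros in $(\mathbb C^\ast)^n$ are finitely many, BKK bounds their number, counted with multiplicity, by $\operatorname{MV}(P_1,\dots,P_n)$. Monotonicity of the mixed volume then gives
\[
\operatorname{MV}(P_1,\dots,P_n)\leq \operatorname{MV}(P_{k,n},\dots,P_{k,n})=n!\operatorname{vol}(P_{k,n}).
\]
Feasible equilibria are among these zeros, so it remains to show that $n!\operatorname{vol}(P_{k,n})$ equals the stated alternating sum. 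This identity also gives the "in particular" clause, since the normalized volume of the independence polytope is exactly $n!\operatorname{vol}(P_{k,n})$.

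For the volume, $\operatorname{vol}(P_{k,n})$ is the probability that $U_1+\cdots+U_n\leq k$ for independent uniform random variables $U_j$ on $[0,1]$. This is the Irwin--Hall distribution function, which I would compute by inclusion--exclusion on the constraints $x_j\leq 1$. Consider the simplex $\{x\geq 0,\ \sum_j x_j\leq k\}$, of volume $k^n/n!$. For a set $L$ of $\ell$ coordinates, the region of this simplex where $x_j>1$ for all $j\in L$ becomes, after the shift $x_j\mapsto x_j-1$ on $L$, the simplex $\{x\geq0,\ \sum_j x_j\leq k-\ell\}$, of volume $(k-\ell)^n/n!$; it is empty when $\ell>k$. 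Inclusion--exclusion over $L$ gives
\[
n!\operatorname{vol}(P_{k,n})=\sum_{\ell=0}^{k}(-1)^\ell\binom{n}{\ell}(k-\ell)^n.
\]
The hypothesis $k<n$ is what makes $P_{k,n}$ a proper truncation of the cube; when $k\geq n$ the polytope is the whole cube and the formula would need a separate reading.

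The argument has two steps: the BKK and monotonicity bound, which copies the previous proof, and the Irwin--Hall inclusion--exclusion computation. The one point that needs care is the shift step in the inclusion--exclusion: each intersection of the events $x_j>1$ must be translated, volume-preservingly, onto a simplex of exactly the right size.
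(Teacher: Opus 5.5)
Your proposal is correct and follows the paper's proof: you apply BKK together with monotonicity of the mixed volume under $P_i\subseteq P$, and then identify $n!\operatorname{vol}(P)$ with the alternating sum. The differences are minor. The paper justifies $\operatorname{conv}\{\alpha\in\{0,1\}^n:|\alpha|\leq k\}=\{u\in[0,1]^n:u_1+\cdots+u_n\leq k\}$ by Edmonds' description of independence polytopes, whereas you assert this equality without proof, so you should cite it or note that the constraint system is totally unimodular. The paper also obtains the volume by citing Marichal--Mossinghoff, while your Irwin--Hall inclusion--exclusion is a correct self-contained computation. Incidentally, that computation shows the formula still holds for $k\geq n$, where it evaluates to $n!$, so no separate reading is needed there.
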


\begin{proof}
For every $i\in[n]$, let $P_i$ be the Newton polytope of $G_i$ and set $P=\operatorname{conv}\{\alpha\in\{0,1\}^n:|\alpha|\leq k\}$. The squarefree support assumption gives $P_i\subseteq P$ for every $i$. Moreover, $P$ is the independence polytope of the uniform matroid $U_{k,n}$, that is, the convex hull of the indicator vectors of its independent sets. Moreover, by Edmonds' description \cite{Edmonds1970} of independence polytopes, $P=\{u\in[0,1]^n \ | \ u_1+\cdots+u_n\leq k\}$. By the BKK theorem, and the monotonicity of mixed volume, the number of common zeros in $(\mathbb C^\ast)^n$, counted with multiplicity, is at most $\operatorname{MV}(P_1,\dots,P_n)\leq\operatorname{MV}(P,\dots,P)=n!\operatorname{vol}(P)$, which is the stated expression by e.g., \cite[Theorem 1]{MarichalMossinghoff2008} taking $w=(1,\cdots,1)$ and $z=k$.
\end{proof}\vspace{0.1cm}

\noindent Note also that this upper bound is the partial sum of Eulerian numbers which counts the number of permutations in $S_n$ with at most $k-1$ descents.

\subsection{A Game-{T}heoretic Perspective}\label{sec: 4.2}

We now interpret the fitness tensor $A$ in \labelcref{eq: higher fitness} as the payoff tensor of a symmetric $d$-player game (see \cite{gokhale2014evolutionary} for a comprehensive overview): its first index corresponds to the strategy of a focal player, while the remaining $d-1$ indices record the strategies of the other players. Under this interpretation, internal equilibria of the replicator dynamics, and hence feasible equilibria of the associated higher-order Lotka--Volterra system, correspond to totally mixed symmetric Nash equilibria. We begin by recalling the relevant notions for normal-form games.

Let $[d]=\{1,\dots,d\}$ be the set of players. Fix for each player $i$ a set of pure strategies $[m_i+1]$ and a mixed strategy simplex $\Delta_{m_i}$. A normal-form game is a $d$-tuple $X=(X^{(1)},\dots,X^{(d)})$, where $X^{(i)}\in\R^{(m_1+1)\times\cdots\times(m_d+1)}$ is the payoff tensor of player $i$, so that if the $d$ players play pure strategies $j_1,\dots, j_d$ respectively, then player $i$ receives $X^{(i)}_{j_1\cdots j_d}$.

A mixed strategy for player $i$ is a probability vector $p^{(i)}=(p^{(i)}_1,\dots,p^{(i)}_{m_i+1})\in\Delta_{m_i}$. The entries satisfy
\begin{equation*}
    \P(\text{player $i$ plays strategy $a$})=p^{(i)}_a,\quad \sum_{a=1}^{m_i+1}p^{(i)}_a=1.
\end{equation*}
A mixed strategy profile of the game is $p=(p^{(1)},\dots,p^{(d)})\in\Delta_{m_1}\times\cdots\times\Delta_{m_d}$. We assume that the players choose their mixed strategy independently, so that $p$ induces the product probability distribution
\begin{equation*}
    \P(\text{players }1,\dots,d\text{ play }j_1,\dots,j_d)=p^{(1)}_{j_1}\cdots p^{(d)}_{j_d}.
\end{equation*}
Let $p\in \Delta_{m_1}\times\cdots\times\Delta_{m_d}$. The expected payoff of player $i$ for $p$ is given by the multilinear expression
\begin{equation*}
    \Pi_i(p)\coloneq X^{(i)}\cdot p=\sum_{j_1=1}^{m_1+1}\cdots\sum_{j_d=1}^{m_d+1}X^{(i)}_{j_1\cdots j_d}p^{(1)}_{j_1}\cdots p^{(d)}_{j_d}.
\end{equation*}

Write $p_{-i}=(p^{(1)},\dots,p^{(i-1)},p^{(i+1)},\dots,p^{(d)})$. If player $i$ deviates to the pure strategy $a\in[m_i+1]$ while the other players keep strategies $p_{-i}$, the payoff of player $i$ is
\begin{equation*}
    \Pi_i(p_{-i},a)=\sum_{(j_h)_{h\neq i}\in\prod_{h\neq i}[m_h+1]}X^{(i)}_{j_1\cdots j_{i-1}a j_{i+1}\cdots j_d}\prod_{h\neq i}p^{(h)}_{j_h}.
\end{equation*}
For a mixed deviation $q\in\Delta_{m_i}$, write $\Pi_i(p_{-i},q)=\sum_{a=1}^{m_i+1}q_a\Pi_i(p_{-i},a)$. A mixed strategy profile $p$ is a Nash equilibrium if
\begin{equation}\label{eq: nash condition}
    \Pi_i(p)\geq \Pi_i(p_{-i},q)
\end{equation}
for every player $i$ and every $q\in\Delta_{m_i}$. It is a totally mixed Nash equilibrium if in addition to \labelcref{eq: nash condition}, we also have $p^{(i)}_a>0$ for every player $i$ and every $a\in[m_i+1]$. Standard computations (see \cite{AboEtAl2025VectorBundle}) show that the totally mixed Nash equilibria are described by
\begin{equation}\label{eq: mixed normal form}
\begin{cases}
    \Pi_i(p_{-i},1)=\cdots=\Pi_i(p_{-i},m_i+1) &\text{for each } i\in[d],\\
    p^{(i)}_1+\cdots+p^{(i)}_{m_i+1}=1 &\text{for each } i\in[d],\\
    p^{(i)}_a>0 &\text{for each } i\in[d]\text{ and }a\in[m_i+1].
\end{cases}
\end{equation}

\noindent Assume now that every player has the same pure strategy set $[n+1]$ and mixed strategy simplex $\Delta_n$. A game is symmetric if relabeling the players does not change payoffs, that is, for every permutation $\sigma\in\mathfrak S_d$, every pure profile $(j_1,\dots,j_d)$, and every player $i$,
\begin{equation}\label{eq: symmetric game}
    X^{(\sigma(i))}_{j_{\sigma^{-1}(1)}\cdots j_{\sigma^{-1}(d)}}=X^{(i)}_{j_1\cdots j_d}.
\end{equation}
The game can then be fully described by means of one focal player: once player $1$ is chosen, the other payoff tensors are recovered by relabeling the players. Since all players have the same mixed strategy simplex, every $y\in\Delta_n$ defines the diagonal mixed strategy profile $(y,\dots,y)\in(\Delta_n)^d$. Such a profile is called a symmetric mixed strategy profile. It is totally mixed if $y\in\operatorname{int}\Delta_n$. A symmetric mixed strategy profile $(y,\dots,y)$ is a symmetric Nash equilibrium if it is a Nash equilibrium of the $d$-player game. It is a totally mixed symmetric Nash equilibrium if, in addition, $y\in\operatorname{int}\Delta_n$.

Now, fix player $1$ as the focal player and define $A_{i k_1\cdots k_{d-1}}\coloneq X^{(1)}_{i k_1\cdots k_{d-1}}$. Let $\rho\in\mathfrak S_{d-1}$ be a permutation of the co-player positions, and let $\widehat\rho\in\mathfrak S_d$ be the permutation fixing player $1$ and satisfying $\widehat\rho(r+1)=\rho(r)+1$ for $r \in [d-1]$. Apply \labelcref{eq: symmetric game} with $\sigma=\widehat\rho$ and with the pure strategy profile $(i,k_1,\dots,k_{d-1})$. Since $\widehat\rho$ fixes player $1$, this gives $A_{i k_{\rho^{-1}(1)}\cdots k_{\rho^{-1}(d-1)}}=A_{i k_1\cdots k_{d-1}}$, and by the arbitrariness of $\rho$, we conclude that the focal payoff tensor of a symmetric normal-form game is symmetric in its last $d-1$ indices. Thus every symmetric $d$-player game determines a focal payoff tensor $A\in\operatorname{Hom}(\operatorname{Sym}^{d-1}\R^{n+1},\R^{n+1})$. Applying $\Psi_{d-1}$ to this tensor gives a higher-order Lotka--Volterra system.

For $y\in\Delta_n$ and $i\in[n+1]$, the payoff of pure strategy $i$ against $d-1$ co-players who all use $y$ is
\begin{equation}\label{eq: symmetric payoff}
    \Pi_i^A(y)=\sum_{k_1,\dots,k_{d-1}=1}^{n+1}A_{i k_1\cdots k_{d-1}}y_{k_1}\cdots y_{k_{d-1}}.
\end{equation}
These payoffs are precisely the polynomial fitnesses in \labelcref{eq: higher fitness}. We have the following result.

\begin{theorem}
Let $A\in\operatorname{Hom}(\operatorname{Sym}^{d-1}\R^{n+1},\R^{n+1})$ be the focal payoff tensor of a symmetric $d$-player game, and let $\Psi_{d-1}(A)=(r,S^1,\dots,S^{d-1})$. For $y\in\operatorname{int}\Delta_n$ and $x=\Phi(y)\in\R^n_{>0}$, the following statements are equivalent:
\begin{enumerate}
    \item \label{item: nash equilibrium} The profile $(y,\dots,y)$ is a totally mixed symmetric Nash equilibrium of the symmetric game.
    \item \label{item: replicator equilibrium} The state $y$ is an internal equilibrium of the replicator system \labelcref{eq: higher replicator}. 
    \item \label{item: lv equilibrium} The state $x$ is a feasible equilibrium of the higher-order Lotka--Volterra system \labelcref{eq: hoglv general}. 
\end{enumerate}
\end{theorem}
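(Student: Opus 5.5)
The plan is to prove \labelcref{item: nash equilibrium}$\Leftrightarrow$\labelcref{item: replicator equilibrium} directly from the Nash conditions, and then get \labelcref{item: replicator equilibrium}$\Leftrightarrow$\labelcref{item: lv equilibrium} from \cref{thm: higher hofbauer}.

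For the first equivalence, specialise the general system \labelcref{eq: mixed normal form} to the symmetric profile $(y,\dots,y)$. Fix a player $i$ and a pure deviation $a\in[n+1]$. The payoff $\Pi_i(p_{-i},a)$, with all co-players using $y$, equals $\Pi^A_a(y)$ from \labelcref{eq: symmetric payoff}. To see this, use \labelcref{eq: symmetric game} with a permutation $\sigma$ that swaps player $i$ into position $1$. This rewrites $X^{(i)}$ in terms of the focal tensor $X^{(1)}=A$ with the co-player indices permuted. Because every co-player uses the same $y$ and $A$ is symmetric in its last $d-1$ indices, that reordering does not change the sum. Thus, for every player, the Nash conditions for $(y,\dots,y)$ reduce to a single player's conditions. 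With $y\in\operatorname{int}\Delta_n$, the totally mixed system \labelcref{eq: mixed normal form} therefore becomes $\Pi^A_1(y)=\cdots=\Pi^A_{n+1}(y)$.

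I will also check directly that this equality is equivalent to \labelcref{eq: nash condition}. If all pure payoffs are equal, then every mixed deviation $q$ gives the same value $\Pi_i(p)=\sum_a y_a\Pi^A_a(y)$, so no deviation is profitable. Conversely, suppose some $\Pi^A_a(y)$ is strictly larger than another. Then the mean $\sum_a y_a\Pi^A_a(y)$, which has full-support weights, is strictly below the maximum, and deviating to a maximising pure strategy is profitable. Since $\Pi^A_i=f_i$ by \labelcref{eq: higher fitness}, the condition $f_1(y)=\cdots=f_{n+1}(y)$ is exactly the definition of an internal equilibrium of \labelcref{eq: higher replicator}. This gives \labelcref{item: nash equilibrium}$\Leftrightarrow$\labelcref{item: replicator equilibrium}.

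For \labelcref{item: replicator equilibrium}$\Leftrightarrow$\labelcref{item: lv equilibrium}, I will use the identity from the proof of \cref{thm: higher hofbauer}, namely $f_i(y)-f_{n+1}(y)=y_{n+1}^{d-1}G_i(\Phi(y))$ for $i\in[n]$. On $\operatorname{int}\Delta_n$ we have $y_{n+1}>0$, so all the $f_i$ are equal exactly when $G(x)=0$ at $x=\Phi(y)$. Also, $\Phi$ maps $\operatorname{int}\Delta_n$ bijectively onto $\R^n_{>0}$, so $x$ is a feasible equilibrium.

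The only delicate step is the symmetry bookkeeping in the second paragraph: showing that every player's deviation payoff equals the focal payoff $\Pi^A_a(y)$. I would handle it by choosing $\sigma$ to be the transposition $(1\,i)$ and substituting it into \labelcref{eq: symmetric game}.
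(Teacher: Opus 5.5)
Your proposal is correct and follows essentially the same route as the paper. You use symmetry to reduce the totally mixed Nash conditions to the focal player's equal-payoff system $f_1(y)=\cdots=f_{n+1}(y)$, and then use \cref{thm: higher hofbauer} (via the identity $f_i-f_{n+1}=y_{n+1}^{d-1}G_i(\Phi(y))$) for the Lotka--Volterra equivalence. The difference is that you spell out two steps the paper treats as standard: the transposition $(1\,i)$ bookkeeping, and the equivalence between equal pure payoffs and the Nash condition.
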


\begin{proof}
Since $y\in\operatorname{int}\Delta_n$, \labelcref{eq: mixed normal form} gives that $(y,\dots,y)$ is a totally mixed Nash equilibrium if and only if for every player, all pure strategies give the same payoff against the other $d-1$ players. By symmetry, it is enough to check this for the focal player, where the co-players all use $y$. As observed, these payoffs are the fitness functions in \labelcref{eq: higher fitness}, so \labelcref{item: nash equilibrium} is equivalent to $f_1(y)=\cdots=f_{n+1}(y)$, and so equivalent to \labelcref{item: replicator equilibrium}. The equivalence between Item \labelcref{item: replicator equilibrium} and Item \labelcref{item: lv equilibrium} in the theorem above is \cref{thm: higher hofbauer}.
\end{proof}

\begin{remark}
If one starts with an arbitrary focal payoff tensor $A\in\R^{(n+1)^d}$, not necessarily symmetric in the co-player indices, then replacing $A$ by its co-player symmetrization
\[
\operatorname{Sym}_{\mathrm{co}}(A)_{i k_1\cdots k_{d-1}}\coloneq \frac{1}{(d-1)!}\sum_{\sigma\in\mathfrak S_{d-1}}A_{i k_{\sigma(1)}\cdots k_{\sigma(d-1)}}
\]
does not change the totally mixed symmetric Nash equilibria of the game. This operation does not preserve non-symmetric Nash equilibria in general.
\end{remark}

We now ask whether a totally mixed symmetric Nash equilibrium associated to a feasible and stable equilibrium persists when the payoffs are perturbed. We first recall the notion of pay-off robustness.

\begin{definition}
A Nash equilibrium $p_\star$ of a normal-form game $X^0$ is called \emph{payoff-robust} if, for every neighborhood $V$ of $p_\star$, there exists $\varepsilon>0$ such that every normal-form game $\widetilde X$ with the same strategy sets and satisfying $\|\widetilde X-X^0\|<\varepsilon$ admits a Nash equilibrium $\widetilde p\in V$.
\end{definition}

Let $A^0$ be the canonical representative from \cref{def: canonical representative}, let $X^0$ be the symmetric normal-form game whose focal payoff tensor is $A^0$, let $x_\star$ be a feasible equilibrium of \labelcref{eq: hoglv general}, and set $y_\star=\Phi^{-1}(x_\star)$. Write $\tilde y_\star\coloneq(y_\star,\dots,y_\star)$.

\begin{theorem}\label{thm: payoff robust}
If the Jacobian $J(x_\star)$ of $\dot x=\diag(x)G(x)$ at $x_\star$ is nonsingular, then $\tilde y_\star$ is payoff-robust. More precisely, every sufficiently small perturbation $\widetilde X$ of $X^0$, not necessarily symmetric, admits a totally mixed Nash equilibrium $\widetilde p$ near $\tilde y_\star$. If $\widetilde X$ is symmetric, the nearby equilibrium can be chosen symmetric. In particular, these conclusions hold when $x_\star$ is linearly asymptotically stable.
\end{theorem}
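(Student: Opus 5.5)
We apply the implicit function theorem to the system \labelcref{eq: mixed normal form} in local coordinates around $\tilde y_\star$. All players use the simplex $\Delta_n$. We parametrize each player's mixed strategy near $y_\star$ by its first $n$ coordinates, $p^{(h)}=(u^{(h)},1-\sum_a u^{(h)}_a)$. The totally mixed equilibrium equations become the $dn$ equations
\[
F_{h,a}(u,\widetilde X)\coloneq \Pi_h(p_{-h},a)-\Pi_h(p_{-h},n+1)=0,\qquad h\in[d],\ a\in[n].
\]
These are polynomial in $u=(u^{(1)},\dots,u^{(d)})\in\R^{dn}$ and linear in the payoff entries. At $(u_\star,X^0)$, with every $u^{(h)}$ equal to the first $n$ coordinates of $y_\star$, they vanish by the equivalence theorem above, since $x_\star$ is feasible. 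If $D_uF(u_\star,X^0)$ is invertible, the implicit function theorem gives a smooth solution $u(\widetilde X)$ for all $\widetilde X$ near $X^0$. For small $\|\widetilde X-X^0\|$ this solution stays in any prescribed neighborhood $V$ and in the interior of the simplices. Then \labelcref{eq: mixed normal form} shows that $\widetilde p$ is a totally mixed Nash equilibrium, which is payoff-robustness.

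\emph{Symmetric case.} If $\widetilde X$ is symmetric, relabeling players maps solutions to solutions. Uniqueness in the implicit function theorem then forces $u(\widetilde X)$ to be invariant under $\mathfrak S_d$, so all $u^{(h)}$ coincide and $\widetilde p$ is symmetric.

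\emph{Stability case.} A linearly asymptotically stable $x_\star$ has a Hurwitz Jacobian, hence a nonsingular one, so this case is immediate.

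The main step is computing $D_uF$ at $(u_\star,X^0)$ and showing it is invertible exactly when $J(x_\star)$ is. Because $\Pi_h(p_{-h},a)$ does not depend on $p^{(h)}$, the block $\partial F_{h,\cdot}/\partial u^{(h)}$ vanishes. For $h\neq h'$, the block $\partial F_{h,\cdot}/\partial u^{(h')}$ is the same $n\times n$ matrix $K$ for every pair. This follows from symmetry of $A^0$ in the co-player indices and the fact that all co-players use $y_\star$: $K$ is the derivative with respect to the first $n$ coordinates of one co-player's strategy (with $y_{n+1}=1-\sum$) of $\Pi^{A^0}_a-\Pi^{A^0}_{n+1}$, evaluated at $y_\star$ with the remaining $d-2$ co-players fixed at $y_\star$.

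Hence $D_uF=(\mathbf{1}\mathbf{1}^T-I_d)\otimes K$. Its eigenvalues are $(d-1)\lambda$ and $-\lambda$ for $\lambda$ an eigenvalue of $K$, so $D_uF$ is invertible iff $K$ is (using $d\geq2$). By multilinearity and co-player symmetry, $(d-1)K$ is the derivative at $y_\star$, along the tangent directions of $\Delta_n$, of the symmetric fitness difference $f_a-f_{n+1}$.

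Next we relate $K$ to the Lotka--Volterra Jacobian. For the canonical representative, $f_{n+1}=0$ and $f_a(y)=y_{n+1}^{d-1}G_a(\Phi(y))$, as in the proof of \cref{prop: canonical representative}. Differentiating at $y_\star$ and using $G(x_\star)=0$ kills the term with the derivative of $y_{n+1}^{d-1}$. We get $(d-1)K=y_{n+1}^{d-1}\,DG(x_\star)\,D(\Phi\circ\iota)$, where $\iota$ is the chart $u\mapsto(u,1-\sum u)$. The map $\Phi\circ\iota$ is a local diffeomorphism, since $\Phi$ is a diffeomorphism onto its image. Finally, $J(x_\star)=\diag(x_\star)DG(x_\star)$ because $G(x_\star)=0$, and $\diag(x_\star)$ is invertible. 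Therefore $K$ is invertible iff $J(x_\star)$ is nonsingular.

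The delicate points are bookkeeping. We must check that the derivative of the $(n+1)$-st coordinate enters correctly through the chart, and that the block structure really uses co-player symmetry of $A^0$. If needed, one can first prove the claim with a symmetric parametrization and then pass to the general perturbation.
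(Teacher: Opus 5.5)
Your proof is correct. Its core coincides with the paper's: the same chart $q(u)$, the vanishing diagonal blocks, the Kronecker form $(E_d-I_d)\otimes B$, the identity $D_uF^{A^0}(u_\star)=(d-1)B$, and the factorization $(d-1)B=y_{\star,n+1}^{d-1}D_xG(x_\star)D_u(\Phi\circ q)(u_\star)$ obtained from the canonical representative, combined with $J(x_\star)=\diag(x_\star)D_xG(x_\star)$.

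The only genuine divergence is the symmetric case. The paper applies the implicit function theorem a second time, to the reduced symmetric system $F^{A}(u)=0$ in the focal tensor $A$. Its Jacobian $(d-1)B$ is already known to be invertible, so nothing further needs checking.

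You instead use that, for symmetric $\widetilde X$, the full Nash map is $\mathfrak S_d$-equivariant: $N^{\widetilde X}_{\sigma(h)}(\sigma\cdot z)=N^{\widetilde X}_h(z)$ with $(\sigma\cdot z)^{(\sigma(h))}=z^{(h)}$. Combined with local uniqueness, this forces the solution to be fixed by $\mathfrak S_d$, so all blocks coincide. This gives a slightly stronger conclusion: the unique totally mixed equilibrium near $\tilde y_\star$ is itself symmetric, rather than merely some symmetric one existing nearby.

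To make your version airtight, do two things. First, state the equivariance identity explicitly. Second, note that the uniqueness neighborhood can be shrunk to one invariant under the block permutations, which is possible because $\tilde z_\star$ is a fixed point of that linear action. The paper's route avoids both checks.
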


\begin{proof} 
For $u=(u_1,\dots,u_n)\in\R^n$ with $u_a>0$ for every $a\in[n]$ and $\sum_{a=1}^n u_a<1$, we define $q(u)\coloneq(u_1,\dots,u_n,1-\sum_{a=1}^n u_a)\in\operatorname{int}\Delta_n$. For each player $h$, we write $p^{(h)}=q(z^{(h)})$ and set $z=(z^{(1)},\dots,z^{(d)})$. For a normal-form game $X$, we define 
\[
N^X_{h,a}(z)\coloneq\Pi_h^X(p_{-h},a)-\Pi_h^X(p_{-h},n+1) \quad \text{for } h\in[d] \text{ and } a\in[n]. 
\] 
We set $N_h^X\coloneq (N^X_{h,1},\dots,N^X_{h,n})$ and $N^X\coloneq (N_1^X,\dots,N_d^X)$. Since every $p^{(h)}$ is totally mixed, the equations $N^X(z)=0$ are equivalent to the totally mixed Nash equilibrium conditions in \labelcref{eq: mixed normal form}. 

We call $N^X(z)=0$ the full Nash system, since the strategies of the $d$ players vary independently. For every focal payoff tensor $A$, we define 
\[
F_a^A(u)\coloneq\Pi_a^A(q(u))-\Pi_{n+1}^A(q(u)) \quad \text{for } a\in[n], 
\]
and $F^A\coloneq(F_1^A,\dots,F_n^A)$. If $X$ is symmetric with focal payoff tensor $A$, symmetry gives $N_h^X(u,\dots,u)=F^A(u)$ for every $h\in[d]$. Therefore, $F^A(u)=0$ if and only if $(q(u),\dots,q(u))$ is a totally mixed symmetric Nash equilibrium of $X$. We call $F^A(u)=0$ the symmetric Nash system. 

Let $u_\star\coloneq(y_{\star,1},\dots,y_{\star,n})$ and $\tilde z_\star\coloneq(u_\star,\dots,u_\star)$, so that $\tilde z_\star$ is the coordinate representation of $\tilde y_\star$. We regard $D_zN^{X^0}(\tilde z_\star)$ as a $d\times d$ block matrix whose $(h,k)$-block is $D_{z^{(k)}}N_h^{X^0}(\tilde z_\star)$. The diagonal blocks vanish because the payoff of player $h$ from a pure deviation depends only on the strategies of the other players. Since $X^0$ is symmetric, $A^0$ is symmetric in the co-player indices, and $\tilde y_\star$ is diagonal, all the off-diagonal blocks are equal. We denote this common $n\times n$ block by $B$. Thus, 
\[D_zN^{X^0}(\tilde z_\star)=(E_d-I_d)\otimes B,\] where $E_d$ is the $d\times d$ matrix of all ones and $\otimes$ denotes the Kronecker product. Since $F^{A^0}(u)=N_1^{X^0}(u,\dots,u)$, the chain rule gives 
\[D_uF^{A^0}(u_\star)=\sum_{k=1}^dD_{z^{(k)}}N_1^{X^0}(\tilde z_\star)=(d-1)B.\]
For the canonical representative $A^0$, the computation in the proof of \cref{thm: higher hofbauer} gives 
\[F^{A^0}(u)=q_{n+1}(u)^{d-1}G(\Phi(q(u))).\] 
Since $\Phi(q(u_\star))=x_\star$ and $G(x_\star)=0$, differentiating at $u_\star$ gives \begin{equation}\label{eq: payoff jacobian} D_uF^{A^0}(u_\star)=y_{\star,n+1}^{d-1}D_xG(x_\star)D_u(\Phi\circ q)(u_\star). \end{equation} The map $\Phi\circ q$ is a diffeomorphism onto $\R^n_{>0}$, so $D_u(\Phi\circ q)(u_\star)$ is invertible. Moreover, 
\[J(x_\star)=\diag(G(x_\star))+\diag(x_\star)D_xG(x_\star)=\diag(x_\star)D_xG(x_\star).\] 
Since $\diag(x_\star)$ is invertible, the nonsingularity of $J(x_\star)$ implies that $D_xG(x_\star)$ is nonsingular. By \labelcref{eq: payoff jacobian}, $D_uF^{A^0}(u_\star)$ is nonsingular, and hence $B$ is nonsingular. The matrix $E_d-I_d$ has eigenvalue $d-1$ on the span of the all-ones vector and eigenvalue $-1$ with multiplicity $d-1$. Therefore \begin{equation}\label{eq: full nash jacobian} \det D_zN^{X^0}(\tilde z_\star)=(-1)^{n(d-1)}(d-1)^n\det(B)^d\neq0. \end{equation} The implicit function theorem applied to the map $(z,X)\mapsto N^X(z)$ gives a nearby zero $\widetilde z$ for every normal-form game $\widetilde X$ sufficiently close to $X^0$. Given a neighborhood $V$ of $\tilde y_\star$, we may shrink the neighborhoods in the implicit function theorem so that the mixed strategy profile $\widetilde p$ corresponding to $\widetilde z$ lies in $V$ and remains totally mixed. Since $N^{\widetilde X}(\widetilde z)=0$, the profile $\widetilde p$ is a totally mixed Nash equilibrium of $\widetilde X$. 

Suppose now that $\widetilde X$ is symmetric, and let $\widetilde A$ be its focal payoff tensor. The implicit function theorem applied to the map $(u,A)\mapsto F^A(u)$ gives a nearby zero $\widetilde u$ of $F^{\widetilde A}$ because $D_uF^{A^0}(u_\star)$ is nonsingular. After shrinking the neighborhoods, $\widetilde y\coloneq q(\widetilde u)$ lies in $\operatorname{int}\Delta_n$ and $(\widetilde y,\dots,\widetilde y)$ lies in $V$. Since $F^{\widetilde A}(\widetilde u)=0$, this profile is a totally mixed symmetric Nash equilibrium of $\widetilde X$. 

Finally, if $x_\star$ is linearly asymptotically stable, then every eigenvalue of $J(x_\star)$ has negative real part. Therefore, $J(x_\star)$ is nonsingular, so the above conclusions hold. 
\end{proof} 

\begin{remark}
{The nonsingularity proved above connects \cref{thm: payoff robust} with the classical index theory of Nash equilibria; see \cite{kohlberg1986strategic,ritzberger1994theory,demichelisgermano2000indices}. In our notation, the Nash equilibrium index of $\tilde y_\star$ is the local degree of $-N^{X^0}$ at $\tilde z_\star$. Since $D_uF^{A^0}(u_\star)=(d-1)B$, $\det D_u(\Phi\circ q)(u_\star)=y_{\star,n+1}^{-(n+1)}>0$, and $\det\diag(x_\star)>0$, we have $\sign\det B=\sign\det J(x_\star)$. Hence, \eqref{eq: full nash jacobian} gives
\[
\operatorname{ind}(\tilde y_\star)=\operatorname{sign}\det\bigl(-D_zN^{X^0}(\tilde z_\star)\bigr)=(-1)^n\bigl(\operatorname{sign}\det J(x_\star)\bigr)^d.
\]
In particular, this index is nonzero. The persistence property of nonzero equilibrium index therefore gives another proof of payoff robustness under arbitrary sufficiently small payoff perturbations \cite{ritzberger1994theory}. Similarly, we have that the symmetric totally mixed equilibrium system $F^{A^0}(u)=0$ has local degree $\operatorname{sign}\det J(x_\star)$ at $u_\star$. Its nonzero local degree guarantees persistence of a nearby totally mixed symmetric Nash equilibrium under sufficiently small symmetric payoff perturbations.

Index theory plays an important role in the study of payoff robustness, and recent algebraic approaches offer new tools for computing equilibrium indices, including in cases with singular Jacobian; see, for instance, \cite{pahl2026index}.} Moreover, for the cohomological foundations of stability, see Mertens \cite{mertens1989stable} and Govindan and Mertens \cite{govindan2004equivalent}.
\end{remark}

\begin{corollary} 
For a classical Lotka--Volterra system $\dot x=\diag(x)(r+Sx)$, every feasible equilibrium $x_\star=-S^{-1}r\in\R^n_{>0}$ gives a payoff-robust totally mixed symmetric Nash equilibrium of the associated symmetric two-player game. 
\end{corollary}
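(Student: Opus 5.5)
The plan is to specialize \cref{thm: payoff robust} to the case $d=2$. The key step is to check its single hypothesis, namely nonsingularity of the Jacobian of $\dot x=\diag(x)G(x)$ at $x_\star$. For $d=2$ the higher-order system \labelcref{eq: hoglv general} reduces to $\dot x=\diag(x)(r+Sx)$ with $S=S^1$. The fitness tensor is then an $(n+1)\times(n+1)$ matrix, i.e.\ the payoff matrix of a symmetric two-player game.

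First I would make the associated game explicit. By \cref{prop: canonical representative}, the parameters $(r,S)$ have a canonical representative $A^0$, and by the $d=2$ example following \cref{def: canonical representative} it is described concretely. Its upper-left $n\times n$ block is $S$, its last column above the last row is $r$, and its last row is zero. Let $X^0$ be the symmetric two-player game with focal payoff matrix $A^0$, that is, $X^{(1)}=A^0$ and $X^{(2)}=(A^0)^T$ after relabeling. This is the associated game in the statement.

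Next I would verify the hypothesis of \cref{thm: payoff robust}. Writing $x_\star=-S^{-1}r$ presupposes that $S$ is invertible. Here $G(x)=r+Sx$, so $G(x_\star)=0$ and $D_xG(x_\star)=S$. The Jacobian is therefore
\[J(x_\star)=\diag(G(x_\star))+\diag(x_\star)S=\diag(x_\star)S.\]
Since $x_\star\in\R^n_{>0}$, the matrix $\diag(x_\star)$ is invertible, so $\det J(x_\star)=\det(S)\prod_i x_{\star,i}\neq 0$.

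\cref{thm: payoff robust} then yields that $\tilde y_\star=(y_\star,y_\star)$, with $y_\star=\Phi^{-1}(x_\star)\in\operatorname{int}\Delta_n$, is payoff-robust. By the equivalence theorem for symmetric games, it is a totally mixed symmetric Nash equilibrium of $X^0$. There is no real obstacle here; the only point needing care is that invertibility of $S$ is built into the formula $x_\star=-S^{-1}r$, and that no stability assumption is required, only feasibility.
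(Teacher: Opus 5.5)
Your proposal is correct and matches the paper's proof. Like the paper, you compute $J(x_\star)=\diag(x_\star)S$, note that it is nonsingular because $x_\star\in\R^n_{>0}$ and $S$ is invertible, and apply \cref{thm: payoff robust} with $d=2$. Your explicit description of the canonical game $A^0$ just unpacks the setup that precedes that theorem.
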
 

\begin{proof} 
In this case $J(x_\star)=\diag(x_\star)S$. Since $x_\star\in\R^n_{>0}$ and $S$ is invertible, $J(x_\star)$ is nonsingular. The claim follows from \cref{thm: payoff robust}. 
\end{proof}
\begin{corollary}\label{cor: adding players} 
For every $r\in\R^n$ and $S^1\in\R^{n^2}$, there are tensors $S^2\in\R^{n^3}$ and $S^3\in\R^{n^4}$ such that the canonical symmetric four-player game associated with $(r,S^1,S^2,S^3)$ admits the uniform diagonal profile $(y_\star,\dots,y_\star)$, where $y_\star=(1,\dots,1)/(n+1)$, as a payoff-robust totally mixed symmetric Nash equilibrium. 
\end{corollary}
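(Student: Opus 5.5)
The plan is to chain \cref{thm: higher stabilization}, \cref{prop: canonical representative}, and \cref{thm: payoff robust}, with the single computation being the identification of the equilibrium $\Phi^{-1}(\mathbf 1)$. First, given arbitrary $r\in\R^n$ and $S^1\in\R^{n^2}$, I will apply \cref{thm: higher stabilization} to obtain tensors $S^2\in\R^{n^3}$ and $S^3\in\R^{n^4}$. The resulting $4$-th order Lotka--Volterra system then has $x_\star=\mathbf 1$ as a feasible equilibrium, and its Jacobian there is $J(\mathbf 1)=-I_n$. If needed, I will replace $S^2$ and $S^3$ by their symmetrizations in the partner indices. This does not change the vector field, so the symmetry assumption of the correspondence is harmless.

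Next, I set $d=4$ and use \cref{prop: canonical representative} to produce the canonical representative $A^0\in\operatorname{Hom}(\operatorname{Sym}^{3}\R^{n+1},\R^{n+1})$ with $\Psi_3(A^0)=(r,S^1,S^2,S^3)$. Since $A^0$ is symmetric in its last three indices, it is the focal payoff tensor of a symmetric four-player game $X^0$; this is the canonical symmetric four-player game in the statement. The corresponding point of the simplex is
\[
y_\star=\Phi^{-1}(\mathbf 1)=\frac{(1,\dots,1,1)}{1+n},
\]
which is the uniform profile and lies in $\operatorname{int}\Delta_n$.

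Finally, $J(\mathbf 1)=-I_n$ is nonsingular, and in fact Hurwitz, so \cref{thm: payoff robust} applies directly. It shows that $(y_\star,\dots,y_\star)$ is a payoff-robust totally mixed Nash equilibrium of $X^0$, and also that nearby symmetric games keep a nearby symmetric equilibrium. This profile is symmetric and totally mixed, and it is a Nash equilibrium because of the equivalence theorem between feasible equilibria and totally mixed symmetric Nash equilibria.

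No step is a genuine obstacle. The only point needing care is checking that the hypotheses of \cref{thm: payoff robust} match this setting: the game must be built from the canonical representative $A^0$, and $x_\star$ must be a feasible equilibrium of the system $\Psi_3(A^0)$. Both hold by construction.
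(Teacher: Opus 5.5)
Your proposal is correct and follows the paper's proof essentially step for step. Like the paper, you take $S^2,S^3$ from \cref{thm: higher stabilization}, symmetrize them in the partner indices, pass to the canonical representative via \cref{prop: canonical representative}, check that $\Phi^{-1}(\mathbf 1)=y_\star$ and that $J(\mathbf 1)=-I_n$ is nonsingular, and then conclude with \cref{thm: payoff robust}.
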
 

\begin{proof} Choose $S^2$ and $S^3$ as in \cref{thm: higher stabilization} and symmetrize their partner indices, which leaves the vector field unchanged. By \cref{prop: canonical representative}, these parameters determine the canonical focal payoff tensor of a symmetric four-player game. The associated higher-order Lotka--Volterra system has the feasible equilibrium $x_\star=\mathbf 1$ with nonsingular Jacobian and $\Phi^{-1}(\mathbf 1)=y_\star$. The claim follows from \cref{thm: payoff robust}. 
\end{proof}

\subsection{Impossible Ecologies and Evolutionarily Stable Strategies}\label{subsec: ess} 

We are ready to give the evolutionary meaning of the impossible ecologies. A Nash equilibrium says that no individual benefits from deviating when the rest of the population keeps its strategy, but it does not say whether the population can resist a small group of mutants. This is the role of an \textit{evolutionarily stable strategy}, introduced by Maynard Smith and Price (\cite{MaynardSmithPrice1973,MaynardSmith1974}) to describe a resident strategy that cannot be invaded by a sufficiently rare mutant. We retain the classical notion for symmetric two-player games.

Let $A\in\R^{(n+1)\times(n+1)}$ be the focal payoff matrix of a symmetric two-player game, and write $\pi(p,q)\coloneq p^{\mathsf T}Aq$ for the payoff of $p\in\Delta_n$ against $q\in\Delta_n$. Here, we refer to $p$ as the resident strategy and to $q$ as the mutant strategy.

\begin{definition}
A strategy $p\in\Delta_n$ is an \emph{evolutionarily stable strategy} (ESS) if, for every $q\in\Delta_n$ with $q\neq p$, there is an $\varepsilon_q>0$ such that $\pi(p,(1-\varepsilon)p+\varepsilon q)>\pi(q,(1-\varepsilon)p+\varepsilon q)$ for every $0<\varepsilon<\varepsilon_q$. It is \emph{totally mixed} if $p\in\operatorname{int}\Delta_n$.
\end{definition}

For two-player games, this definition has an equivalent form: for every $q\neq p$, either $\pi(p,p)>\pi(q,p)$, or $\pi(p,p)=\pi(q,p)$ and $\pi(p,q)>\pi(q,q)$ (see \cite{MaynardSmith1974,HofbauerSchusterSigmund1979}). Thus an ESS is a symmetric Nash equilibrium, with an additional test when the mutant is itself a best reply to the resident. In that case, residents must earn more against mutants than mutants earn against one another. For a totally mixed symmetric Nash equilibrium $p$, every mutant $q$ ties with $p$ against the resident population, so the ESS condition reduces to $\pi(p,q)>\pi(q,q)$ for every $q\neq p$.

ESS have a crucial link with dynamics. Taylor and Jonker proved the stability result for regular ESSs, and Hofbauer, Schuster, and Sigmund showed that every ESS is a locally asymptotically stable equilibrium of the replicator dynamics (see \cite{TaylorJonker1978,HofbauerSchusterSigmund1979,Hofbauer1998} or \cite{Allesina2026} for a recent treatment through the equivalence between replicator and Lotka--Volterra dynamics). Therefore, by \cref{thm: higher hofbauer} with $d=2$, a totally mixed ESS gives a feasible locally asymptotically stable equilibrium of the associated Lotka--Volterra system.

We now apply these results to the sign patterns studied in \cref{sec:impossible-ecologies}. Fix an ecological sign pattern $\sigma=(\sigma_i,\sigma_{ij})\in\{\pm\}^{n+n^2}$. Given a symmetric two-player game with focal payoff matrix $A\in\R^{(n+1)\times(n+1)}$, let $A^0$ denote the canonical representative of the class of $A$ modulo $\ker\Psi_1$, as in \cref{def: canonical representative}. We set $a_i(A)\coloneq A^0_{i,n+1}=A_{i,n+1}-A_{n+1,n+1}$ and $b_{ij}(A)\coloneq-A^0_{ij}=A_{n+1,j}-A_{ij}$, and write $\ba(A)\coloneq(a_i(A))_{i\in[n]}$ and $\bB(A)\coloneq(b_{ij}(A))_{i,j\in[n]}$. Thus, $A^0$ has zero last row, its last column above the final entry is $\ba(A)$, and its upper-left block is $-\bB(A)$. Let $\mathcal G_\sigma$ be the class of symmetric two-player games for which $\sign(\ba(A),\bB(A))=\sigma$, that is, the class of games whose replicator dynamics correspond to Lotka--Volterra systems $\dot{\bx}=\diag(\bx)(\ba(A)-\bB(A)\bx)$ with sign pattern $\sigma$. 

\begin{theorem}\label{thm: impossible ess} 
If $\sigma$ is an impossible ecological sign pattern on $n$ species, then no game in $\mathcal G_\sigma$ admits a totally mixed evolutionarily stable strategy.
\end{theorem}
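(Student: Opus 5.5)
We argue by contraposition. Suppose some game $A\in\mathcal G_\sigma$ admits a totally mixed ESS $p\in\operatorname{int}\Delta_n$. We will produce parameters $(\ba(A),\bB(A))$ with sign pattern $\sigma$ whose Lotka--Volterra system has a feasible locally asymptotically stable equilibrium. That makes $\sigma$ possible, a contradiction.

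The chain of implications is the one sketched before the statement. By Hofbauer--Schuster--Sigmund, an ESS is a locally asymptotically stable equilibrium of the replicator dynamics. Since $p$ is interior, it is an internal equilibrium. By \cref{thm: higher hofbauer} with $d=2$, the point $\bx_\star:=\Phi(p)\in\R^n_{>0}$ is then a feasible equilibrium of $\dot\bx=\diag(\bx)(\ba(A)-\bB(A)\bx)$, and it is locally asymptotically stable. The reparametrization of time is increasing, and $\Phi$ is a diffeomorphism, so local stability properties transfer. By construction of $\mathcal G_\sigma$, the sign pattern of $(\ba(A),\bB(A))$ is $\sigma$.

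The one point needing care is that the definition of feasibility uses $\bx_\star=\bB^{-1}\ba$, so we must check that $\bB(A)$ is invertible. I would derive this from the ESS inequality itself, as follows.

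First, the condition $\pi(p,q)>\pi(q,q)$ can be rewritten as $(p-q)^{\mathsf T}Aq>0$. Subtracting the equality $(p-q)^{\mathsf T}Ap=0$ (valid because $p$ is a totally mixed Nash equilibrium) gives $v^{\mathsf T}Av<0$ for every nonzero $v=q-p$ in the tangent space $\{v:\mathbf 1^{\mathsf T}v=0\}$. By scaling, this holds for all nonzero tangent $v$, because $p$ is interior.

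Second, this quadratic form is unchanged by adding a constant to each column, since $v^{\mathsf T}\mathbf 1c^{\mathsf T}v=0$ on tangent vectors. Hence we may replace $A$ by the canonical representative $A^0$, using \cref{prop: fitness shift}.

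Third, let $P$ be the $(n+1)\times n$ matrix with columns $e_a-e_{n+1}$, and use the function $F^{A^0}$ from the proof of \cref{thm: payoff robust}. Then $D_uF^{A^0}(u)=P^{\mathsf T}A^0P$. This matrix has negative definite symmetric part, so it is nonsingular.

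Fourth, identity \labelcref{eq: payoff jacobian} with $d=2$ gives $D_uF^{A^0}(u_\star)=y_{\star,n+1}\,D_xG(\bx_\star)\,D_u(\Phi\circ q)(u_\star)$. Here $D_xG=-\bB(A)$, and $D_u(\Phi\circ q)(u_\star)$ is invertible. Therefore $\bB(A)$ is nonsingular, and so $\bx_\star=\bB(A)^{-1}\ba(A)>0$.

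I expect this invertibility check to be the main subtlety, since the rest is a direct citation chain. As a by-product, the same negative definiteness makes the Jacobian nonsingular. If needed, \cref{prop: stability under perturbation} could also be invoked to pass to a Hurwitz Jacobian within the same sign pattern. Either way, $\sigma$ is possible, contradicting the assumption that it is impossible.
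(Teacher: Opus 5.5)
Your proposal is correct and follows the paper's proof: the Hofbauer--Schuster--Sigmund stability of an ESS, followed by transport of the locally asymptotically stable interior equilibrium to $\xstar=\Phi(p)$ via \cref{thm: higher hofbauer}, which contradicts the impossibility of $\sigma$. Your extra check that $\bB(A)$ is invertible, which the paper leaves implicit, is valid, though it also follows more cheaply from stability itself: if $\bB(A)$ were singular, every point of $\xstar+\ker\bB(A)$ near $\xstar$ would be a positive equilibrium, so $\xstar$ could not be asymptotically stable.
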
 

\begin{proof} Suppose that a game in $\mathcal G_\sigma$ has a totally mixed ESS $p$. The stability discussion above makes $p$ a locally asymptotically stable internal equilibrium of its replicator dynamics. Under the diffeomorphism $\Phi$ of \labelcref{eq: diffeo simplex}, the point $p$ maps to a feasible locally asymptotically stable equilibrium ${\xstar}=\Phi(p)$ of the associated Lotka--Volterra system by \cref{thm: higher hofbauer}. This system has sign pattern $\sigma$, contradicting the assumption that $\sigma$ is impossible. 
\end{proof} \vspace{0.1cm}

We highlight that the statement concerns whole classes of games, not isolated payoff matrices. In fact, by \cref{prop: canonical representative}, the map $A\mapsto(\ba(A),\bB(A))$ is surjective. Moreover, the signs defining $\mathcal G_\sigma$ are given by strict linear inequalities in the entries of $A$. Hence, every $\mathcal G_\sigma$ is a nonempty open polyhedral set of payoff matrices. Therefore, the obstruction comes from the signs of payoff differences encoded by the ecological network, rather than from the format of the game alone. 

\begin{corollary}\label{cor: formats without ess} 
There are nonempty open classes of symmetric two-player games of formats $2\times2$, $3\times3$, and $4\times4$ that admit no totally mixed ESS. The infinite families of impossible ecologies in \cref{sec:impossible-ecologies} give such classes of $(n+1)\times(n+1)$ games for arbitrarily large $n$. 
\end{corollary}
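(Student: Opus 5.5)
The corollary will follow by combining \cref{thm: impossible ess} with the impossibility results of \cref{sec:impossible-ecologies} and the openness and nonemptiness of $\mathcal G_\sigma$ noted after that theorem. A game of format $(n+1)\times(n+1)$ corresponds, through the map $A\mapsto(\ba(A),\bB(A))$, to a Lotka--Volterra system on $n$ species. So for each format we only need one impossible sign pattern $\sigma$ on the corresponding number of species, and then $\mathcal G_\sigma$ is the desired class.

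First I would record that, for any $\sigma$, the class $\mathcal G_\sigma$ is nonempty and open. The map $A\mapsto(\ba(A),\bB(A))$ is linear, with $a_i(A)=A_{i,n+1}-A_{n+1,n+1}$ and $b_{ij}(A)=A_{n+1,j}-A_{ij}$, so it is continuous. It is surjective by \cref{prop: canonical representative}: take $A^0$ with zero last row, last column $\ba$ and upper-left block $-\bB$. Since $\sigma$ prescribes strict signs, the set of $(\ba,\bB)$ with $\sign(\ba,\bB)=\sigma$ is a nonempty open orthant. Its preimage $\mathcal G_\sigma$ is therefore open, and it is nonempty by surjectivity. For an impossible $\sigma$, \cref{thm: impossible ess} then says that no game in $\mathcal G_\sigma$ has a totally mixed ESS.

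Next I would pick the sign patterns.
\begin{itemize}
\item For $2\times2$ games ($n=1$), I would not rely on the two-species list. Instead I would note that a single species with $a_1<0$ and $b_{11}>0$ is already impossible, because $x_\star=a_1/b_{11}<0$. This is also the $n=1$ instance of the obligate-mutualism argument.
\item For $3\times3$ games ($n=2$), I would take two-species obligate mutualism, which is impossible by \cref{ex:two-species-obligate-mutualism} or \cref{thm: obligate mutualism}.
\item For $4\times4$ games ($n=3$), I would take three-species obligate mutualism, again by \cref{thm: obligate mutualism}.
\item For arbitrary $n$, I would use \cref{thm: obligate mutualism,thm: competing mutualism,thm: multiple odd cycles competitive,THM:SingSpecCompetingOddCycles,Thm:OnePredatingCycle}, which give impossible patterns for every $n\geq2$ (respectively every $n\ge 4$).
\end{itemize}

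The only delicate point is the indexing between formats and species counts: an $(n+1)\times(n+1)$ game corresponds to $n$ species, not $n+1$. This is why the $2\times2$ case needs the one-species observation rather than the two-species list.

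The other thing to check is that \cref{thm: impossible ess} uses local asymptotic stability and not the Hurwitz condition. This is consistent, since impossibility was defined through local asymptotic stability, and \cref{prop: stability under perturbation} was only used to reduce that definition to the Hurwitz formulation. With these checks in place, the corollary is a direct assembly of the results above and no new computation is needed.
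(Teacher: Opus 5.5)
Your proposal is correct and follows essentially the same route as the paper. Both use the one-species pattern $a_1<0<b_{11}$ for $2\times2$ games and known two- and three-species impossible ecologies for $3\times3$ and $4\times4$ games (you take obligate mutualism via \cref{thm: obligate mutualism}, which is one of them); both then use the infinite families for large $n$ and apply \cref{thm: impossible ess}, with your verification that $\mathcal G_\sigma$ is open and nonempty spelling out the remark the paper makes right after that theorem.
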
 

\begin{proof} 
For one species, the sign pattern $a_1<0$ and $b_{11}>0$ is trivially impossible. The impossible two and three species ecologies are recalled in \cref{sec:FSstratum}, while the four species and the infinite families are proved in \cref{sec:impossible-ecologies}. Applying \cref{thm: impossible ess} gives the claim. 
\end{proof} 

\begin{example}\label{ex: no_ess_5x5}
Consider a symmetric two-player game where each player has $5$ pure strategies, corresponding to an ecological system with $n=4$ species. Suppose the game is defined by the following payoff matrix $X \in \R^{5 \times 5}$:
\[
X = \begin{pmatrix}
 2 & 5 & 4 & 3 & 6 \\
 4 & 2 & 6 & 5 & 3 \\
 2 & 5 & 4 & 8 & 1 \\
 2 & 5 & 9 & 3 & 1 \\
 3 & 6 & 5 & 4 & 2
\end{pmatrix}.
\]
We ask whether this game admits a totally mixed ESS. We can construct its canonical representative defined in \cref{def: canonical representative} to uncover its underlying ecological network. Following the notation of \cref{subsec: ess}, we compute the growth rates $\ba(X) \in \R^4$ and the interaction matrix $\bB(X) \in \R^{4 \times 4}$. The components of the growth vector are given by $a_i(X) = X_{i,5} - X_{5,5}$, while the interaction coefficients are given by $b_{ij}(X) = X_{5,j} - X_{i,j}$:
\[
\ba(X) = \begin{pmatrix} 4 \\ 1 \\ -1 \\ -1 \end{pmatrix},\quad
\bB(X) = \begin{pmatrix} 1 & 1 & 1 & 1 \\ -1 & 4 & -1 & -1 \\ 1 & 1 & 1 & -4 \\ 1 & 1 & -4 & 1 \end{pmatrix}.
\]
Therefore, this game belongs to the class $\mathcal G_\sigma$, where $\sigma$ is the sign pattern of the asymmetric four species ecology of \cref{lem: eco1asymm}. The same lemma shows that this ecological network is impossible, meaning it cannot admit a feasible, locally asymptotically stable equilibrium. Therefore, by \cref{thm: impossible ess}, we immediately conclude that the symmetric game defined by $X$ cannot admit any totally mixed ESS. In fact, the uniform strategy $p_\star=(1/5,1/5,1/5,1/5,1/5)$ is the unique totally mixed symmetric Nash equilibrium of the game, but it is not an ESS.
\end{example}

\printbibliography[heading=bibintoc,title={References}]

\end{document}